\newcommand{\arxiv}[1]{\href{http://arxiv.org/pdf/#1}{arXiv:#1}}
\newcommand*{\mailto}[1]{\href{mailto:#1}{\nolinkurl{#1}}}
\newtheorem{theorem}{Theorem}[section]
\newtheorem{lemma}[theorem]{Lemma}
\newtheorem{corollary}[theorem]{Corollary}
\newtheorem{remark}[theorem]{Remark}
\newtheorem{proposition}[theorem]{Proposition}
\numberwithin{equation}{section}
\newtheorem{definition}[theorem]{Definition}
\newcommand{\epsi}{\varepsilon}
\newcommand{\Winf}{{W^{1,\infty}(\Real)}}
\newcommand{\Gr}{G}
\newcommand{\D}{\ensuremath{\mathcal{D}}}
\newcommand{\G}{\ensuremath{\mathcal{G}}}
\newcommand{\F}{\ensuremath{\mathcal{F}}}
\newcommand{\Real}{\mathbb{R}}
\newcommand{\muac}{\mu_{\text{\rm ac}}}
\newcommand{\muacZ}{\mu_{0, \text{\rm ac}}}
\newcommand{\quot}{{\F/\Gr}}
\newcommand{\inv}{{^{-1}}}
\newcommand{\dott}{\, \cdot\,}
\newcommand{\dx}{\,dx}
\newcommand{\sign}{\mathop{\rm sign}}
\newcommand{\nn}{\nonumber}
\newcommand{\norm}[1]{\left\Vert#1\right\Vert}
\newcommand{\abs}[1]{\left\vert#1\right\vert}
\newcommand{\Linf}{{L^\infty(\Real)}}
\newcommand{\Ltwo}{{L^2(\Real)}}
\DeclareMathOperator{\meas}{meas}
\DeclareMathOperator{\id}{Id}
\def\XXint#1#2#3{{\setbox0=\hbox{$#1{#2#3}{\int}$}
    \vcenter{\hbox{$#2#3$}}\kern-.5\wd0}}
\numberwithin{equation}{section}
\begin{document}

\title[Global solutions for the two-component
Camassa--Holm system]{Global solutions for the
  two-component Camassa--Holm system}

\author[K. Grunert]{Katrin Grunert}
\address{Department of Mathematical Sciences\\
  Norwegian University of Science and Technology\\
  7491 Trondheim\\ Norway}
\email{\mailto{katring@math.ntnu.no}}
\urladdr{\url{http://www.math.ntnu.no/~katring/}}

\author[H. Holden]{Helge Holden}
\address{Department of Mathematical Sciences\\
  Norwegian University of Science and Technology\\
  7491 Trondheim\\ Norway\\ {\rm and} Centre of
  Mathematics for Applications\\ University Oslo\\
  0316 Oslo\\ Norway}
\email{\mailto{holden@math.ntnu.no}}
\urladdr{\url{http://www.math.ntnu.no/~holden/}}

\author[X. Raynaud]{Xavier Raynaud}
\address{Centre of Mathematics for Applications\\
  University Oslo\\ 0316 Oslo\\ Norway}
\email{\mailto{xavierra@cma.uio.no}}
\urladdr{\url{http://www.folk.uio.no/xavierra/}}

\date{\today} 
\thanks{Research supported in part by the
  Research Council of Norway under projects Wavemaker, NoPiMa, and by the Austrian Science Fund (FWF) under Grant No.~J3147.}  
\subjclass[2010]{Primary:
  35Q53, 35B35; Secondary: 35Q20}
\keywords{Two-component Camassa--Holm equation, conservative solutions, Lipschitz continuous}

\begin{abstract}
  We prove existence of a global conservative solution of the Cauchy problem for the two-component Camassa--Holm (2CH) system on the line, allowing for nonvanishing and distinct asymptotics at plus and minus infinity. The solution is proven to be smooth as long as the density is bounded away from zero. Furthermore, we show that by taking the limit of vanishing density in the 2CH system, we obtain the global conservative solution of the (scalar) Camassa--Holm equation, which provides a novel way to define and obtain these solutions. Finally, it is shown that while solutions of the 2CH system have infinite speed of propagation, singularities travel with finite speed.
\end{abstract}
\maketitle

\section{Introduction}

The two-component Camassa--Holm (2CH) system, which was first derived in \cite[Eq.~(43)]{OlverRosenau}, is given by
\begin{subequations}
  \label{eq:chsys}
  \begin{align}
    \label{eq:chsys11}
    u_t-u_{txx}+\kappa u_x+3uu_x-2u_xu_{xx}-uu_{xxx}+\eta\rho\rho_x&=0,\\
    \label{eq:chsys12}
    \rho_t+(u\rho)_x&=0,
  \end{align}
\end{subequations}
for constants $\kappa\in\Real$ and $\eta\in(0,\infty)$, or equivalently
\begin{subequations}
 \label{eq:rewchsys10}
\begin{align}
 \label{eq:rewchsys11}
u_t+uu_x+P_x&=0,\\
\label{eq:rewchsys12}
    \rho_t+(u\rho)_x&=0,
\end{align}
\end{subequations}
  where $P$ is implicitly defined by
\begin{equation}
  \label{eq:rewchsys13}
  P-P_{xx}=u^2+\kappa u+\frac12u_x^2+\eta\frac12\rho^2.
\end{equation}
We here study the Cauchy problem on the line where the equations  \eqref{eq:rewchsys10}--\eqref{eq:rewchsys13} are augmented with initial conditions $(u,\rho)|_{t=0}=(u_0,\rho_0)$.
The equations have been derived as a model for shallow water by Constantin and Ivanov \cite{MR2474608}, where it is shown that $\eta$ positive and  $\rho$  nonnegative is the physically relevant case.  

The purpose of this paper is twofold: First of all we want to show the existence of a global and conservative solution of the 2CH system by suitable modifying recent results \cite{GHR:12} for the (scalar) Camassa--Holm (CH) equation 
\begin{equation}
  \label{eq:ch}
    u_t-u_{txx}+\kappa u_x+3uu_x-2u_xu_{xx}-uu_{xxx}=0,
\end{equation}
(which one obtains by taking $\rho$ identically zero in  \eqref{eq:rewchsys10}--\eqref{eq:rewchsys13}, or  \eqref{eq:chsys}). It turns out that the solution of  \eqref{eq:rewchsys10}--\eqref{eq:rewchsys13} is regular as long as the initial density $\rho_0$ is bounded away from zero (Theorem \ref{th:presreg} and Corollary \ref{cor:presreg3}).  This contrasts the case of the CH equation where one in general encounters weak solutions only.  Secondly, we study the limit of the global conservative solution of the 2CH system for a sequence $\rho_0^n$ of  initial densities tending to zero as $n\to\infty$. We find  that the solution of the 2CH system  approaches the global conservative solution of the Camassa--Holm equation (Theorem \ref{th:approxCH}). This offers an alternative approach to the study of conservative solutions, and a novel way to define conservative solutions. This is interesting because  the CH equation enjoys two distinct classes of solutions, denoted dissipative and conservative solutions, respectively. In brief terms, the conservative solution preserves energy, while energy decreases for dissipative solutions. Both classes will in general  have weak solutions rather than smooth solutions. To identify and characterize the two classes has turned out to be rather involved, see, e.g., \cite{BreCons:07,BreCons:09,HolRay:07,HolRay:09,GHRb:10,GHR:11}, and references therein. The  approach in this paper characterizes conservative solutions as limits of smooth (classical) solutions of the 2CH system. This is novel.

The 2CH system
\eqref{eq:rewchsys10}--\eqref{eq:rewchsys13} has
been studied extensively, from many different
points of view, making a complete list of
references too long. However, we here mention that
Wang, Huang, and Chen \cite{WangHuangChen} have
studied conservative and global solutions of the
2CH system using a change of variables similar to
the one employed here. The results here are more
detailed and precise. In particular, we establish
the semigroup property of the solutions and the
continuity of the semigroup with respect to a new
distance which is introduced.  The vanishing
density limit is not discussed in \cite{WangHuangChen}.  
Escher, Lechtenfeld, and Yin  \cite{eschlechyin:07} established 
a short-time existence theory for  solutions using Kato techniques in
the case $\eta=-1$ and $\kappa=0$. 
In the same paper it is
shown that solutions may blow up in final
time. Our approach does not apply to the case with $\eta$ negative. Constantin and Ivanov \cite{MR2474608} showed
that the solution for small initial data (or, more
precisely, for $\rho_0$ close to a constant and
small $u_0$) remains smooth. We here extend this result to
data of arbitrary size, provided the density is
bounded away from zero initially. A remarkable
property of the system, which is shown in
\cite{GuanYin2010a}, is that when $\rho_0(x)>0$
for all $x\in\Real$, the solution exists globally
in time. In Theorem \ref{th:presreg} we 
establish a local smoothing effect of the variable
$\rho$, thereby extending the result of \cite{GuanYin2010a}. In particular, we show how the characteristics govern the domain of smoothness.

For other related results pertaining to the
present system, please see
\cite{GuanYin2010a,GuiLiu2011,GuiLiu2010}.  In
addition to the 2CH system discussed in the
present paper, there exists several other
two-component generalizations of the CH equation,
see, e.g., \cite{ChenLiu2010, FuQu:09,
  GuanKarlsenYin2010,GuanYin2010,GuoZhou2010,
  Kuzmin, TanYin}. For traveling wave solutions
see \cite{MR2474608,ChenLiuZhang,Mohajer,Mustafa}.

Let us next describe the content of this paper more precisely.  While we in this paper treat the case of arbitrary (nonvanishing) asymptotics of the initial data (and thereby of the solution), in the sense that
 \begin{equation*}
 \lim_{x\to\pm\infty} u_0(x)=u_{\pm\infty}\quad\text{ and }\quad\lim_{\abs{x}\to\infty} \rho_0(x)=\rho_{\infty},
\end{equation*} 
we here, in order to make the presentation in the introduction more transparent,   assume vanishing asymptotics for $u$, that is, $u_{\pm\infty}=0$. Furthermore, we assume  
$\eta=1$ and 
$\kappa=0$.  We first make a change from Eulerian to Lagrangian variables and
introduce a new energy variable. The change of variables, which we now will detail,  is related to the one used in \cite{HolRay:07} and, in particular, \cite{GHR:12}.  Assume that $u=u(x,t)$ is a solution, and define the characteristics $y=y(t,\xi)$ by
\begin{equation*}
y_t(t,\xi)=u(t,y(t,\xi))
\end{equation*}
and the Lagrangian velocity by
\begin{equation*}
  U(t,\xi)=u(t,y(t,\xi)).
\end{equation*} 
By introducing the Lagrangian energy density $h$ and density $r$ by
\begin{equation*}
 h(t,\xi)=u_x^2(t,y(t,\xi))y_\xi(t,\xi)+\rho^2(t,y(t,\xi))y_\xi(t,\xi), \quad  r(t,\xi)=\rho(t,y(t,\xi))y_\xi(t,\xi),
\end{equation*}
we find that the system can be rewritten as
\begin{align*}
   y_t&=U,\\
    U_t&=-Q,\\
    h_t&=2(U^2-P)U_\xi,\\
    r_t&=0,
\end{align*}
where the functions $P$ and $Q$ are explicitly given by \eqref{eq:P} and  \eqref{eq:Q}, respectively.  We then establish the existence of a unique global solution for this system (see Theorem \ref{th:global}) which forms a continuous semigroup in an appropriate norm.  
In order to solve the Cauchy problem we have to choose the initial data appropriately. To accommodate for the possible concentration of energy we augment the natural initial data $u_0$ and $\rho_0$ with a nonnegative Radon measure $\mu_0$ such that the absolutely continuous part $\muacZ$   is $\muacZ=(u_{0,x}^2+(\rho_0-\rho_{0,\infty})^2)\,dx$.  The precise translation of these initial data is given in Theorem \ref{th:Ldef}.  One then solves the system in Lagrangian coordinates. The translation back to Eulerian variables is described in Theorem \ref{th:umudef}. However, there is an intrinsic problem in this latter translation if one wants a continuous semigroup. This is due to the problem of relabeling; to each solution in Eulerian variables there exist several distinct solutions in Lagrangian variables as there are additional degrees of freedom in the Lagrangian variables. In order to resolve this issue to get a continuous semigroup, one has to identify Lagrangian  functions corresponding to one and the same Eulerian solution. This is treated in 
Sec.~\ref{sec:eulerlagrange}.  The main existence theorem, Theorem \ref{th:mainX}, states that for  $u_0\in H^1$ and $\rho_0\in L^2$ and $\mu_0$ a nonnegative Radon measure with absolutely continuous part $\muacZ$   such that $\muacZ=(u_{0,x}^2+(\rho_0-\rho_{0,\infty})^2)\,dx$, there exists a continuous semigroup $T_t$ such that $(u,\rho,\mu)(t)=T_t(u_0,\rho_0,\mu_0)$ is a weak global and conservative solution of the 2CH system. In addition, the measure $\mu$ satisfies
\begin{equation*}
  (u^2+\mu)_t+(u(u^2+\mu))_x=(u^3-2Pu)_x,
\end{equation*}
weakly. Furthermore, for almost all times the measure $\mu$ is absolutely continuous and $\mu=(u_{x}^2+(\rho-\rho_{\infty})^2)\,dx$.

In order to analyze the case where we consider a sequence of initial densities $\rho_0^n$ tending to zero as $n\to\infty$, we need to have a sufficiently strong stability result. To that effect, we have the following result, Theorem \ref{th:approxCH}.  Consider a sequence of initial data $(u_0^n,\rho_0^n, \mu_0^n)$ such that 
$u_0^n\to u_0$ in $H^1(\Real)$, $\rho_0^n-\rho_{0,\infty}^n \to 0$ in $L^2(\Real)$,  
$\rho_{0,\infty}^n \to 0$, with $\rho_0^n\ge d_n>0$ for all $n$. Assume that the initial measure is absolutely continuous, that is,  
$\mu_0^n=\muacZ^n=((u_{0,x}^n)^2+(\rho_0^n- \rho_{0,\infty}^n)^2)\,dx$. Then the sequence $u^n(t)$ will converge in $L^\infty(\Real)$ to the weak, conservative global solution of the Camassa--Holm equation with initial data $u_0$.

Finally, we want to address the regularity issue. Consider an open subset I of $\Real$. We say that $(u,\rho,\mu)$ is $p$-regular on  $I$ if
\begin{equation*}
  u\in W^{p,\infty}(I),\ \rho\in W^{p-1,\infty}(I)\ \text{ and } \muac=\mu\text{ on }I.
\end{equation*}
A surprising feature of the 2CH system is that while it has an infinite speed of propagation \cite{henry:09}, singularities travel with finite speed. This is the content of the following theorem, Theorem \ref{th:presreg}.  Assume that the initial data $(u_0,\rho_0,\mu_0)$ is $p$-regular on  an interval $(x_0,x_1)$ such that  $\rho_0^2\ge c>0$ on  $(x_0,x_1)$.  Then the solution $(u,\rho,\mu)(t)$ is $p$-regular on the interval given by the characteristics emanating from  $(x_0,x_1)$. More precisely, it is 
$p$-regular on the interval $(y(t,\xi_0),
  y(t,\xi_1))$, where $\xi_0$ and $\xi_1$ satisfy
  $y(0,\xi_0)=x_0$ and $y(0,\xi_1)=x_1$ and are
  defined as
  \begin{equation*}
    \xi_0=\sup\{\xi\in\Real\ |\ y(0,\xi)\leq x_0\} \text{ and }
    \xi_1=\inf\{\xi\in\Real\ |\ y(0,\xi)\geq x_1\}.
  \end{equation*}
Thus we see that regularity is preserved between characteristics.
 
\section{Eulerian coordinates}

We consider the Cauchy problem for the two-component Camassa--Holm system with arbitrary 
$\kappa\in\Real$ and $\eta\in(0,\infty)$, 
\begin{subequations}
  \label{eq:chsys2}
  \begin{align}
    \label{eq:chsys21}
    u_t-u_{txx}+\kappa u_x+3uu_x-2u_xu_{xx}-uu_{xxx}+\eta\rho\rho_x&=0,\\
    \label{eq:chsys22}
    \rho_t+(u\rho)_x&=0,
  \end{align}
\end{subequations}
with initial data $u|_{t=0}=u_0$ and $\rho|_{t=0}=\rho_0$.
We are interested in global solutions for initial data $u_0$ with nonvanishing  and possibly distinct limits at infinity, that is, 
\begin{equation}\label{eq:nonvanlim}
 \lim_{x\to-\infty} u_0(x)=u_{-\infty}\quad\text{ and }\quad\lim_{x\to\infty} u_0(x)=u_{\infty}.
\end{equation}
Furthermore we assume that the initial density has equal asymptotics which need not to be zero, that is,
\begin{equation}\label{eq:nonvanlimR}
 \lim_{x\to\pm\infty} \rho_0(x)=\rho_{\infty}.
\end{equation}
More precisely we introduce the spaces
\begin{equation}
H_{\infty}(\Real)=\{v\in L^1_{\rm loc}(\Real) \mid 
v(x)=\bar v(x)+v_{-\infty}\chi(-x)+v_\infty \chi(x), \, \bar v\in H^1(\Real), v_{\pm\infty}\in\Real\}, \label{eq:Hinf} 
\end{equation}
where $\chi$ denotes a smooth partition function 
with support in $[0,\infty)$ such that $\chi(x)=1$ for $x\geq 1$ and
$\chi'(x)\geq 0$ for $x\in\Real$, and
\begin{equation}
L^2_{\rm const}(\Real)= \{g\in L^1_{\rm loc}(\Real) \mid g(x)=g_\infty+\bar g(x), \, \bar g\in L^2(\Real),  g_\infty\in \Real\}.\label{eq:Lconst} 
\end{equation}
Subsequently, we will assume that
\begin{equation}
u_0\in H_{\infty}(\Real), \quad \rho_0\in L^2_{\rm const}(\Real).
\end{equation}

Introducing the
mapping $I_{\chi}$ from $H^1(\Real)\times\Real^2$
into $H_{\text{loc}}^1(\Real)$ given by
\begin{equation*}
  I_{\chi}(\bar u,c_-,c_+)(x)=\bar u(x)+c_-\chi(-x)+c_+\chi(x)
\end{equation*}
for any $(\bar u,c_-,c_+)\in
H^1(\Real)\times\Real^2$, yields that any initial
condition $u_0\in H_\infty(\Real)$ is defined by
an element in $H^1(\Real)\times \Real^2$ through
the mapping $I_\chi$.  Hence we see that
$H_{\infty}(\Real)$ is the image of
$H^1(\Real)\times\Real^2$ by $I_\chi$, that is,
$H_{\infty}(\Real)=I_{\chi}(H^1(\Real)\times\Real^2)$.The
linear mapping $I_\chi$ is injective. We equip
$H_\infty(\Real)$ with the norm
\begin{equation}
  \label{eq:normHinfty}
  \norm{u}_{H_\infty}=\norm{\bar u}_{H^1}+\abs{c_-}+\abs{c_+}
\end{equation}
where $u=I_\chi(\bar u,c_-,c_+)$.  Then
$H_\infty(\Real)$ is a Banach space. Given another
partition function $\tilde\chi$, we define the
mapping $(\tilde{\bar u},\tilde c_-,\tilde
c_+)=\Psi(\bar u,c_-,c_+)$ from $H^1(\Real)\times\Real^2$
to $H^1(\Real)\times\Real^2$ as $\tilde c_-=c_-$, $\tilde
c_+=c_+$ and
\begin{equation}
  \label{eq:defPsi}
  \tilde{\bar u}(x)=\bar u(x)+c_-(\chi(-x)-\tilde\chi(-x))+c_+(\chi(x)-\tilde\chi(x)).
\end{equation}
The linear mapping $\Psi$ is a continuous
bijection. Since
\begin{equation*}
  I_{\chi}=I_{\tilde\chi}\circ \Psi,
\end{equation*}
we can see that the definition of the Banach space
$H_\infty(\Real)$ does not depend on the choice of
the partition function $\chi$. The norm defined by
\eqref{eq:normHinfty} for different partition
functions $\chi$ are all equivalent.

Similarly, one can associate to any element $\rho\in L^2_{\rm const}(\Real)$ the unique pair $(\bar\rho, k)\in L^2(\Real)\times \Real$ through the mapping $J$ from $L^2(\Real)\times\Real$ to $L^2_{\rm const}(\Real)$ which is defined as 
\begin{equation}
 J(\bar\rho,k)=\bar\rho+k.
\end{equation}
In fact $J$ is bijective from $L^2(\Real)\times\Real$ to $L^2_{\rm const}(\Real)$, which allows us to equip $L^2_{\rm const}(\Real)$ with the norm 
\begin{equation}\label{normL2inf}
 \norm{\rho}_{L^2_{\rm const}}=\norm{\bar\rho}_{L^2}+\vert k\vert,
\end{equation}
where we decomposed $\rho$ according to $\rho=J(\bar\rho,k)$. Thus $L^2_{\rm const}(\Real)$ together with the norm defined in \eqref{normL2inf} is a Banach space.

Note that for smooth solutions, we have the
following conservation law
\begin{equation}
  \label{eq:conslaw}
  (u^2+u_x^2+\eta\rho^2)_t+(u(u^2+u_x^2+\eta\rho^2))_x=(u^3+\kappa u^2-2Pu)_x.
\end{equation}

Moreover, if $(u(t,x),\rho(t,x))$ is a solutions of the two-component Camassa--Holm system \eqref{eq:chsys2}, then, for any constant $\alpha\in\Real$ we easily find that 
\begin{equation}
 v(t,x)=u(t,x-\alpha t)+\alpha,\quad \text{ and }\quad \tau(t,x)=\sqrt{\eta}\rho(t,x-\alpha t), 
\end{equation}
solves the two-component Camassa--Holm system with
$\kappa$ replaced by $\kappa-2\alpha$ and
$\eta=1$. Therefore, without loss of generality,
we assume in what follows, that $\lim_{x\to
  -\infty}u_0(x)=0$ and $\eta=1$. In addition, we
only consider the case $\kappa=0$ as one can make
the same conclusions for $\kappa\not =0$ with
slight modifications.

\section{Global solutions in Lagrangian coordinates}

The aim of this section is to rewrite the two-component Camassa--Holm system as a system of ordinary differential equations in a suitable Banach space, such that we can prove global existence of solutions therefore.

\subsection{Equivalent system}

Rewrite the two-component Camassa--Holm system
as the following system
\begin{subequations}
 \label{eq:rewchsys}
\begin{align}
 \label{eq:rewchsys1}
u_t+uu_x+P_x&=0,\\
\label{eq:rewchsys2}
    \rho_t+(u\rho)_x&=0,
\end{align}
\end{subequations}
  where $P$ is implicitly defined by\footnote{For $\kappa$ nonzero \eqref{eq:rewchsys3}
  is simply replaced by $P-P_{xx}=u^2+\kappa u
  +\frac{1}{2}u_x^2+\frac{1}{2}\rho^2$.}
\begin{equation}
  \label{eq:rewchsys3}
  P-P_{xx}=u^2+\frac12u_x^2+\frac12\rho^2.
\end{equation}
Introduce the subspace $H_{0,\infty}(\Real)$ of  $H_\infty(\Real)$ as 
\begin{equation*}
  H_{0,\infty}(\Real)=I_{\chi}(H^1(\Real)\times\{0\}\times\Real).
\end{equation*}
Then we obtain, using \eqref{eq:rewchsys3}, under the assumption that $\rho\in L^2_{\rm const}(\Real)$ and $u\in H_{0,\infty}(\Real)$
that the function $P$ can be expressed through
\begin{align}\label{rep:p}
 P(x)&=c^2\chi^2(x)+\frac{1}{2}\int_\Real e^{-\vert x-z\vert } (2c\chi\bar u +\bar u^2+\frac{1}{2}u_x^2+2c^2\chi^{\prime 2}+2c^2\chi\chi^{\prime\prime})(z) dz\\ \nn
&\quad + \frac{1}{2} k^2+\frac{1}{2}\int_\Real e^{-\vert x-z\vert } (\frac{1}{2}\bar \rho^2+k\bar \rho)(z)dz .
\end{align}
In particular, $P\in H_{\infty}(\Real)$ and
especially $P_x\in L^2(\Real)$.

We introduce the Lagrangian variables and rewrite
the governing equations \eqref{eq:rewchsys} with
respect to these variables.  Let the
characteristics $y(t,\xi)$, which can be decomposed as $y(t,\xi)=\zeta(t,\xi)+\xi$, be defined as the
solutions of
\begin{equation}
\label{eq:char}
y_t(t,\xi)=u(t,y(t,\xi))
\end{equation}
for some given initial function $y(0,\xi)$. We define
\begin{equation}
  \label{eq:defU}
  U(t,\xi)=u(t,y(t,\xi)),
\end{equation}
which can be decomposed as
\begin{equation}
 U(t,\xi)=\bar U(t,\xi)+c(t)\chi\circ y(t,\xi),
\end{equation}
where $\bar U\in H^1(\Real)$ and $c\in\Real$. 
In addition, we define $h\in L^2(\Real)$ formally as 
\begin{equation}
 h(t,\xi)=u_x^2(t,y(t,\xi))y_\xi(t,\xi)+\bar \rho^2(t,y(t,\xi))y_\xi(t,\xi),
\end{equation}
so that $(u_x^2(t,x)+\bar \rho^2(t,x))dx=y_\# (h(t,\xi)d\xi)$, and $r\in L^2_{\rm const}(\Real)$ as
\begin{equation}\label{eq:rhotor}
 r(t,\xi)=\rho(t,y(t,\xi))y_\xi(t,\xi).
\end{equation}
Moreover, $r$ can be decomposed as
\begin{equation}
r(t,\xi)=\bar r(t,\xi)+k(t)y_\xi(t,\xi),                                    
\end{equation}
so that $\rho(t,x)dx=k(t)+y_\# (\bar
r(t,\xi)d\xi)$.  Here $y_\#$ denotes the
pushforward.\footnote{The push-forward of a
  measure $\nu$ by a measurable function $f$ is
  the measure $f_\#\nu$ defined as
  $f_\#\nu(B)=\nu(f^{-1}(B))$ for any Borel set
  $B$.}  We have
\begin{equation}
 y_\xi h= U_\xi^2+\bar r^2.
\end{equation}
The system
\eqref{eq:rewchsys}--\eqref{eq:rewchsys3} rewrites
\begin{subequations}
  \label{eq:chsyseq0}
  \begin{align}
    \label{eq:chsyseq1}
    y_t&=U,\\
    \label{eq:chsyseq2}
    U_t&=-Q,\\
    \label{eq:chsyseq40}
    c_t&=0,\\
    \label{eq:chsyseq50}
    h_t&=2(U^2+\frac{1}{2}k^2-P)U_\xi,\\
    \label{eq:chsyseq60}
    r_t&=0,\\
    \label{eq:chsyseq80}
    k_t&=0,
  \end{align}
\end{subequations}
where the functions $P(t,\xi)$ and $Q(t,\xi)$ are given by
\begin{equation}
 \label{eq:P}
P(\xi)=\frac14\int_\Real
  e^{-\abs{y(\xi)-y(\eta)}}((2\bar U^2+4c\bar
  U\chi\circ y)y_\xi+2k\bar r +h)(\eta)\,d\eta+c^2g\circ y(\xi)+\frac{1}{2}k^2,
\end{equation}
and 
\begin{equation}
 \label{eq:Q}
Q(\xi)=-\frac14\int_\Real \sign(\xi-\eta)e^{-\abs{y(\xi)-y(\eta)}}((2\bar
  U^2+4c\bar U\chi\circ y)y_\xi+2k\bar r+h)(\eta)\,d\eta+c^2 g'\circ y(\xi),
\end{equation}
where 
\begin{equation}\label{def:g}
 g(x)=\chi^2(x)+\frac12\int_\Real e^{-\abs{x-z}}(2\chi^{\prime 2}+2\chi\chi^{\prime\prime})(z) \,dz.
\end{equation}
Equations \eqref{eq:chsyseq1},
\eqref{eq:chsyseq2}, \eqref{eq:chsyseq50}, and
\eqref{eq:chsyseq60}, follow directly from
\eqref{eq:rewchsys}--\eqref{eq:rewchsys3}.  The
value of the energy density $(u^2+u_x^2+\rho^2exit)$ is
essentially controlled by the variable $h$, and the
evolution equation for $h$, \eqref{eq:chsyseq50},
follows from the conservation law
\eqref{eq:conslaw}, which yields
\begin{equation*}
  (u_x^2+\rho^2)_t+(u(u_x^2+\rho^2))_x=2(u^2-P)u_x.
\end{equation*}
Equations \eqref{eq:chsyseq40} and
\eqref{eq:chsyseq80} state that the asymptotics do
not evolve in time. Formally, it comes from the
fact that, if all functions are smooth and the
derivatives decay sufficiently fast, then
\begin{equation*}
 \lim_{x\to\pm\infty}u_t(x)=- \lim_{x\to\pm\infty}(uu_x+ P_x)=0,
\end{equation*}
and 
\begin{equation*}
    \lim_{x\to\pm\infty}\rho_t(x)=-\lim_{x\to\pm\infty}{(\rho u_x+u\rho_x)}=0.
  \end{equation*}

The variables $y$, $U$, and $r$ given by
\eqref{eq:char}, \eqref{eq:defU} and
\eqref{eq:rhotor} can be seen as the standard
Lagrangian variables for the equation. However, to
carry out our analysis we need to introduce the
variables $\zeta$, $\bar U$ and $\bar r$, which
are derived from the former variables and, in
addition, belong to the appropriate Banach
spaces. In those variables the system of governing
equation writes
\begin{subequations}
  \label{eq:chsyseq}
  \begin{align}
    \label{eq:chsyseq11}
    \zeta_t&=U,\\
    \label{eq:chsyseq3}
    \bar U_t&=-Q-c(\chi^\prime\circ y)U,\\
    \label{eq:chsyseq5}
    h_t&=2(U^2+\frac{1}{2}k^2-P)U_\xi,\\
    \label{eq:chsyseq7}
    \bar r_t&= -kU_\xi,\\
    \label{eq:chsyseq4}
    c_t&=0,\\
    \label{eq:chsyseq8}
    k_t&=0.
  \end{align}
\end{subequations}

\subsection{Existence and uniqueness of solutions of the equivalent system}

Let $V$ be the Banach space defined by
\begin{equation*}
V=\{f\in C_b(\Real) \ |\ f_\xi\in\Ltwo\}
\end{equation*}
where $C_b(\Real)=C(\Real)\cap\Linf$ and the norm
of $V$ is given by
$\norm{f}_V=\norm{f}_{L^\infty}+\norm{f_\xi}_{L^2}$. We
define the Banach space $E$ by
\begin{equation*}
  E=V\times H_{0,\infty}(\Real)\times L^2(\Real)\times L^2_{\rm const}(\Real),
\end{equation*}
and 
\begin{equation}
 \bar E=V\times H^1(\Real)\times\Real\times L^2(\Real)\times L^2(\Real)\times \Real.
\end{equation}
Then $E$ is in isometry with $\bar E$ and we define 
\begin{equation}
 \norm{(\zeta,U,h,r)}_{E}=\norm{(\zeta, I_\chi^{-1}(U), h,J^{-1}(r))}_{\bar E}.
\end{equation}
However, all partition functions give rise to equivalent norms (cf. \cite[Sec. 6]{GHR:12}). For convenience, we will
often abuse the notations and denote by the same
$X$ the two elements $(y,\bar U,c,h,\bar r,k)$ and
$(y,U,h,r)$ where, by definition, $U=\bar
U+c\chi\circ y$ and $r=\bar r+ky_\xi$. Furthermore, we will occasionally write $(\zeta,\bar U,c,h,\bar r, k)$ and
$(\zeta,U,h,r)$ for $X$.

The investigation  in \cite[Sec.~3]{GHR:12}  of $g$ defined by \eqref{def:g} showed that $g\in H_{0,\infty}(\Real)$ with $\lim_{x\to\infty}g(x)=1$. Moreover, $g$ is monotonically increasing and $g^\prime\in H^1(\Real)$. 

This yields the following result.
\begin{lemma}
  \label{lem:PQ}
  For any $X=(\zeta,U,h,r)$ in $E$, we define the
  maps $\mathcal{Q}$ and $\mathcal{P}$ as
  $\mathcal{Q}(X)=Q$ and $\mathcal{P}(X)=P$, where
  $P$ and $Q$ are given by \eqref{eq:P} and
  \eqref{eq:Q}, respectively. Then,
  $\mathcal{P}-\frac{1}{2}k^2-U^2$ is a Lipschitz map on bounded
  sets from $E$ to $H^1(\Real)$ and $\mathcal{Q}$
  is a Lipschitz map on bounded sets from $E$ to
  $H^1(\Real)$. Moreover we have
  \begin{equation}\label{eq:Qder}
    Q_\xi= -\frac12 h-(U^2+\frac{1}{2}k^2-P) y_\xi-k\bar r,
  \end{equation}
  \begin{equation}\label{eq:Pder}
    P_\xi=Q(1+\zeta_\xi).
  \end{equation}
\end{lemma}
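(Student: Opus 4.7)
The plan is to first establish the pointwise derivative identities \eqref{eq:Pder} and \eqref{eq:Qder} by direct differentiation of \eqref{eq:P}--\eqref{eq:Q}, and then combine these with Young-type convolution estimates to deduce the $H^1$-valued Lipschitz property.

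For \eqref{eq:Pder}, I differentiate \eqref{eq:P} in $\xi$ using $\partial_\xi e^{-\abs{y(\xi)-y(\eta)}}=-\sign(y(\xi)-y(\eta))\,y_\xi(\xi)\,e^{-\abs{y(\xi)-y(\eta)}}$ together with admissibility of $X$ (so that $y=\zeta+\id$ is nondecreasing and $\sign(y(\xi)-y(\eta))=\sign(\xi-\eta)$) and $(g\circ y)_\xi=(g'\circ y)y_\xi$. Pulling $y_\xi=1+\zeta_\xi$ out of the integral recovers exactly $Q$ and yields \eqref{eq:Pder}. For \eqref{eq:Qder}, I differentiate \eqref{eq:Q}; now the derivative of $\sign(\xi-\eta)$ supplies a $2\delta$-contribution producing the boundary term $-\tfrac12\big[(2\bar U^2+4c\bar U\chi\circ y)y_\xi+2k\bar r+h\big](\xi)$, while the derivative of the exponential contributes $y_\xi$ times the full convolution integral, which equals $y_\xi(P-c^2 g\circ y-\tfrac12 k^2)$. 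Expanding $U^2=\bar U^2+2c\bar U\chi\circ y+c^2(\chi\circ y)^2$ and collecting, the residual $c^2 y_\xi[(\chi\circ y)^2-g\circ y+g''\circ y]$ vanishes by the identity $g-g''=\chi^2$. This identity follows directly from \eqref{def:g}: writing $g=\chi^2+w$ and noting that the kernel $\tfrac12 e^{-\abs{\cdot}}$ solves $w-w''=2\chi'^2+2\chi\chi''=(\chi^2)''$, one gets $g''=w$ and hence $g-g''=\chi^2$.

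With \eqref{eq:Pder}--\eqref{eq:Qder} in hand the $H^1$ estimates are essentially routine. The ``source'' $S=(2\bar U^2+4c\bar U\chi\circ y)y_\xi+2k\bar r+h$ lies in $L^1(\Real)\cap L^2(\Real)$ with norms polynomial in $\norm{X}_E$ (using $\bar U\in H^1\hookrightarrow L^\infty\cap L^2$, $\zeta_\xi\in L^2$, $\bar r,h\in L^2$, and boundedness of $\abs{c},\abs{k}$). Young's inequality applied to the kernel $e^{-\abs{y(\xi)-y(\eta)}}$ (whose row and column integrals are bounded uniformly in $X$) yields $L^\infty\cap L^2$ bounds on the convolution part of $P$ and $Q$. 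The unbounded term $c^2 g\circ y$ in $P$ is not in $L^2$ by itself, but the $c^2(\chi\circ y)^2$ piece of $U^2$ cancels its asymptotic behaviour, leaving $c^2(g-\chi^2)\circ y\in H^1$ because $g-\chi^2\in H^1$; this is precisely why the subtraction $-U^2-\tfrac12 k^2$ appears in the statement. The analogous term $c^2 g'\circ y$ in $Q$ is already in $H^1$ since $g'\in H^1$. The $L^2$-control of $P_\xi-2UU_\xi$ and $Q_\xi$ is then immediate from \eqref{eq:Pder}--\eqref{eq:Qder}. Lipschitz continuity on bounded sets is obtained by applying the same bounds to differences $X_1-X_2$; the only new ingredient is the kernel Lipschitz estimate $\abs{e^{-\abs{y_1(\xi)-y_1(\eta)}}-e^{-\abs{y_2(\xi)-y_2(\eta)}}}\le 2\norm{\zeta_1-\zeta_2}_{L^\infty}$, which is furnished by the $V$-component of the $E$-norm.

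The main obstacle is the algebraic bookkeeping: pinning down which combinations of $P$, $Q$, $U^2$ and $k^2$ produce $H^1$-functions in spite of the individual pieces having nonvanishing limits at infinity, and verifying the identity $g-g''=\chi^2$ that underlies the clean form of the $Q_\xi$-formula. Once this is set up, the Young and Cauchy--Schwarz estimates themselves are standard.
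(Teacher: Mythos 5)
Your argument is correct and follows essentially the same route as the proof the paper delegates to \cite[Lemma 3.1]{GHR:12}: differentiate the explicit formulas \eqref{eq:P}--\eqref{eq:Q}, use the Green's-function identity $g-g''=\chi^2$ to absorb the $c^2$-terms into the cancellation $c^2 g\circ y - c^2(\chi\circ y)^2=c^2(g-\chi^2)\circ y\in H^1$, and close the $H^1$ and Lipschitz bounds with Young/Schur estimates after subtracting $U^2+\tfrac12 k^2$; your verification of $g-g''=\chi^2$ and of the residual cancellation in $Q_\xi$ is exactly right. Two small points to tighten: (i) the source $S=(2\bar U^2+4c\bar U\chi\circ y)y_\xi+2k\bar r+h$ lies in $L^1+L^2$ rather than $L^1\cap L^2$ (the piece $4c\bar U\chi\circ y$ need not be integrable since $\bar U\in H^1$ does not give $\bar U\in L^1$), which still suffices for the convolution bounds; and (ii) the pointwise kernel estimate $\abs{e^{-\abs{y_1(\xi)-y_1(\eta)}}-e^{-\abs{y_2(\xi)-y_2(\eta)}}}\le 2\norm{\zeta_1-\zeta_2}_{L^\infty}$ discards the off-diagonal decay and by itself cannot produce an $L^2$-valued Lipschitz bound for the difference of the convolution parts; one should instead use $\abs{e^{-a}-e^{-b}}\le\abs{a-b}\,e^{-\min(a,b)}$ so that a factor $e^{-\abs{\xi-\eta}}$ (up to $e^{2\norm{\zeta_1}_{L^\infty}+2\norm{\zeta_2}_{L^\infty}}$) is retained before applying Young's inequality. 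Finally, you are right that \eqref{eq:Qder}--\eqref{eq:Pder} require $y=\zeta+\id$ to be nondecreasing so that $\sign(y(\xi)-y(\eta))=\sign(\xi-\eta)$; this hypothesis is left implicit in the lemma as stated for general $X\in E$, and your explicit appeal to it is the more careful reading.
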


\begin{proof}
  The structure of the involved functions is
  similar to the ones in \cite{GHR:12}. We
  therefore refer to the proof of \cite[Lemma
  3.1]{GHR:12}.
\end{proof}

\begin{theorem}\label{thm:short}
Given $X_0=(\zeta_0,U_0,h_0,r_0)\in E$, then there exists a time $T$ depending only on $\norm{X_0}_{E}$ such that \eqref{eq:chsyseq} admits a unique solution in $C^1([0,T], E)$ with initial data $X_0$.
\end{theorem}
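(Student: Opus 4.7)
My plan is to rewrite \eqref{eq:chsyseq} as an abstract ODE $X_t=F(X)$ on the Banach space $E$ and to invoke the standard Picard--Lindel\"of (Cauchy--Lipschitz) theorem for ODEs with values in a Banach space. Once $F$ is shown to be locally Lipschitz from $E$ into itself, that theorem produces a unique local solution $X\in C^1([0,T],E)$, and the existence time $T$ depends only on the Lipschitz constant of $F$ on a ball containing $X_0$, hence only on $\norm{X_0}_{E}$. The work therefore consists in verifying that the right-hand side
\begin{equation*}
F(X)=\bigl(U,\,-Q-c(\chi'\circ y)U,\,0,\,2(U^2+\tfrac12 k^2-P)U_\xi,\,-kU_\xi,\,0\bigr)
\end{equation*}
maps $E$ into $E$ and is Lipschitz on bounded subsets of $E$, when $E$ is viewed via the isometry with $\bar E$.

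I would first check that $F(X)\in E$ for every $X\in E$. The first component $U=\bar U+c\chi\circ y$ lies in $V$ because $\bar U\in H^1\hookrightarrow C_b$, $\chi$ is bounded, and $U_\xi=\bar U_\xi+c(\chi'\circ y)(1+\zeta_\xi)\in L^2$. Lemma \ref{lem:PQ} yields $Q\in H^1$ and $P-\tfrac12 k^2-U^2\in H^1\hookrightarrow L^\infty$, which together with the smoothness and boundedness of $\chi'$ gives the second component in $H^1$. The fourth component lies in $L^2$ by the pointwise estimate
\begin{equation*}
\norm{(U^2+\tfrac12 k^2-P)U_\xi}_{L^2}\le\norm{U^2+\tfrac12 k^2-P}_{L^\infty}\norm{U_\xi}_{L^2},
\end{equation*}
the fifth component is obviously in $L^2$, and the third and sixth vanish.

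For the local Lipschitz property, the two nonlocal quantities $P$ and $Q$ are handled directly by Lemma \ref{lem:PQ}. The remaining expressions are pointwise products and compositions, which I would treat through the standard telescoping identity $FG-\widetilde F\widetilde G=F(G-\widetilde G)+(F-\widetilde F)\widetilde G$ combined with elementary bilinear bounds such as $L^\infty\times L^2\to L^2$. The compositions $\chi\circ y$ and $\chi'\circ y$ are Lipschitz in $\zeta\in V$ on bounded sets, since $\chi$ and its derivatives are smooth and bounded on $\Real$; similarly $(\chi'\circ y)(1+\zeta_\xi)$ is Lipschitz into $L^2$. Combining these facts, one obtains a Lipschitz bound for $F$ on every ball $B_R\subset E$ depending only on $R$, and Picard--Lindel\"of then produces the desired unique $X\in C^1([0,T],E)$ with $T=T(\norm{X_0}_E)$.

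The main technical point will be the bookkeeping in the Lipschitz estimates for the quadratic term $(U^2+\tfrac12 k^2-P)U_\xi$ and for the compositions with the characteristic $y=\xi+\zeta$, since $V$ provides only $L^2$-control of $\zeta_\xi$ and no $L^\infty$-control. It is precisely the fact that Lemma \ref{lem:PQ} delivers an $H^1$-estimate on $P-\tfrac12 k^2-U^2$ (rather than merely an $L^2$-estimate) that lets the Sobolev embedding $H^1\hookrightarrow L^\infty$ absorb $U_\xi\in L^2$ and yield the $L^2$-bound needed for the $h_t$-equation. All these estimates closely parallel those carried out in \cite[Thm.~3.2]{GHR:12}.
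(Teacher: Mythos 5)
Your proposal is correct and follows essentially the same route as the paper: the paper's proof likewise recasts \eqref{eq:chsyseq} as the integral equation $X(t)=X_0+\int_0^t F(X(\tau))\,d\tau$ on $E$, invokes Lemma \ref{lem:PQ} to get that $F$ is Lipschitz on bounded sets, and concludes by the standard contraction (Picard--Lindel\"of) argument. Your additional bookkeeping of why each component of $F(X)$ lands in the right factor of $E$ is exactly the verification the paper leaves implicit (deferring to \cite{GHR:12}).
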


\begin{proof}
 Solutions of \eqref{eq:chsyseq} can be rewritten as 
\begin{equation*}
 X(t)=X_0+\int_0^t F(X(\tau))d\tau,
\end{equation*}
where $F\colon E\to E$ is defined by the right-hand side of \eqref{eq:chsyseq}. The integrals are defined as Riemann integrals of continuous functions on the Banach space $E$. Using Lemma~\ref{lem:PQ} we can check that  $F(X)$ is a Lipschitz function on bounded sets of $E$. Since $E$ is a Banach space, we use the standard contraction argument to prove the theorem. 
\end{proof}

Differentiating \eqref{eq:chsyseq0} and
\eqref{eq:chsyseq}, we obtain
\begin{subequations}
 \label{eq:govsysder}
\begin{align}
\label{eq:govsysder1}
 y_{\xi,t}& =U_\xi,\\ 
\label{eq:govsysder3}
U_{\xi,t}&=\frac{1}{2}h+(U^2+\frac{1}{2}k^2-P)y_\xi+k\bar r, \\
\label{eq:govsysder4}
h_t&=2(U^2+\frac{1}{2}k^2-P)U_\xi,\\
\label{eq:govsysder5}
r_t&=0,
\end{align}
and
\begin{align}
  \zeta_{\xi,t}&=U_\xi,\\
  \label{eq:govsysder2}
  \bar U_{\xi,t} &= \frac{1}{2}h+(U^2+\frac{1}{2}k^2-P)y_\xi+k\bar r-c\chi''\circ yy_\xi U-c\chi'\circ yU_\xi,\\
  \label{eq:govsysder6}
  \bar r_t&=-kU_\xi.
\end{align}
\end{subequations}
We now turn to the proof of the existence of
global solutions of \eqref{eq:chsyseq}. To that
end we consider initial data that belongs to
$E\cap( [W^{1,\infty}(\Real)]^2\times
[L^\infty(\Real)]^2)$, where
$W^{1,\infty}(\Real)=\{ f\in C_b(\Real) \mid
f_\xi\in L^\infty(\Real)\}$. Given $(\zeta_0, U_0,
h_0, r_0)\in E \cap ([W^{1,\infty}(\Real)]^2\times
[L^\infty(\Real)]^2)$, we consider the short-time
solutions $(\zeta, U, h,r)\in C([0,T],E)$ of
\eqref{eq:chsyseq} given by
Theorem~\ref{thm:short}. Using the fact that
$\mathcal{Q}$ and $\mathcal{P}-\frac{1}{2}k^2-U^2$
are Lipschitz on bounded sets and, since $X\in
C([0,T],E)$, we can prove that
$P-\frac{1}{2}k^2-U^2$ and $Q$ belong to $C([0,T],
H^1(\Real))$. Hence we now consider
$P-\frac{1}{2}k^2-U^2$ and $Q$ as given functions
in $C([0,T], H^1(\Real))$. Then, for any fixed
$\xi\in\Real$, we can solve the system of ordinary
differential equations
\begin{subequations}\label{eq:sysabc}
 \begin{align}
  \frac{d}{dt}\alpha(t,\xi)&=\beta(t,\xi),\\
  \frac{d}{dt}\beta(t,\xi)& = \frac{1}{2}\gamma(t,\xi)+[(U^2+\frac{1}{2}k^2-P)(t,\xi)](1+\alpha(t,\xi))+k\delta(t,\xi),\\
  \frac{d}{dt} \gamma(t,\xi)& = 2(U^2+\frac{1}{2}k^2-P)\beta(t,\xi),\\
  \frac{d}{dt}\delta(t,\xi)&=-k\beta(t,\xi),
\end{align}
\end{subequations}
which is obtained by substituting $\zeta_\xi$, $U_\xi$, $h$, and $\bar r$ in \eqref{eq:govsysder} by the unknowns $\alpha$, $\beta$, $\gamma$, and $\delta$, respectively in $[L^\infty(\Real)]^4$. We have to specify the initial conditions for \eqref{eq:sysabc}. 

Let $\mathcal{A}$ be the following set
\begin{equation*}
 \mathcal{A}=\{ \xi\in\Real \mid  \vert \zeta_{0,\xi}\vert \leq \norm{\zeta_{0,\xi}}_{L^\infty},  \vert U_{0,\xi}(\xi)\vert \leq \norm{U_{0,\xi}}_{L^\infty} , \vert h_0\vert \leq\norm{h_0}_{L^\infty}, \vert r_0\vert\leq\norm{r_0}_{L^\infty}\}.
\end{equation*}

By assumption we have that $\mathcal{A}$ has full measure, that is, $\meas(\mathcal{A}^c)=0$. For $\xi\in \mathcal{A}$ we define $(\alpha(0,\xi), \beta(0,\xi), \gamma(0,\xi),\delta(0,\xi))=(\zeta_{0,\xi}(\xi), U_{0,\xi}(\xi), h_0(\xi), \bar r_0(\xi))$. However, if $\xi\in\mathcal{A}^c$, we set $(\alpha(0,\xi),\beta(0,\xi),\gamma(0,\xi),\delta(0,\xi))=(0,0,0,0)$. 

\begin{lemma}\label{lem:abczUH}
 Given some initial condition $X_0=(\zeta_0,  U_0, h_0,r_0)\in E\cap ([W^{1,\infty}(\Real)]^2\times [L^\infty(\Real)]^2)$, we consider the solution 
$X=(\zeta, U, h,r)\in C^1([0,T],E)$ of \eqref{eq:chsyseq} given by Theorem~\ref{thm:short}. Then $X\in C^1([0,T], E\cap ([W^{1,\infty}(\Real)]^2\times [L^\infty(\Real)]^2))$. The functions $\alpha(t,\xi)$, $\beta(t,\xi)$, $\gamma(t,\xi)$, and $\delta(t,\xi)$ which are obtained by solving \eqref{eq:sysabc} for any fixed given $\xi$ with the initial condition specified above, coincide for almost every $\xi$ and for all times $t$ with $\zeta_\xi$, $U_\xi$, $h$,and $\bar r$, respectively, that is, for all $t\in [0,T]$, we have 
\begin{equation}\label{eq:abczUH}
 (\alpha(t,\xi), \beta(t,\xi),\gamma(t,\xi),\delta(t,\xi))=(\zeta_\xi(t,\xi), U_\xi(t,\xi), h(t,\xi),\bar r(t,\xi))
\end{equation}
   for almost every $\xi\in\Real$.
\end{lemma}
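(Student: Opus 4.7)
The plan is to exploit the fact that, once $U^2+\tfrac12 k^2-P$ is regarded as a given coefficient, both the pointwise ODE system \eqref{eq:sysabc} and the $\xi$-differentiated system \eqref{eq:govsysder} become linear in the unknown four-tuple. Identifying the two sets of solutions will reduce to uniqueness for a linear Cauchy problem, and the uniform $L^\infty$ bounds will follow from a pointwise Gronwall estimate.

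First I would check that \eqref{eq:sysabc} is a well-posed linear ODE pointwise in $\xi$. Since $X\in C([0,T],E)$, Lemma~\ref{lem:PQ} gives $P-\tfrac12 k^2-U^2\in C([0,T],H^1(\Real))$, and the embedding $H^1(\Real)\hookrightarrow C_b(\Real)$ supplies a representative jointly continuous in $(t,\xi)$ with a uniform bound depending only on $\sup_{t\le T}\|X(t)\|_E$. The same holds for $U(t,\xi)$. For each fixed $\xi\in\Real$, the system \eqref{eq:sysabc} then has bounded continuous coefficients and admits a unique absolutely continuous solution on $[0,T]$. A direct Gronwall estimate, uniform in $\xi$, yields
\begin{equation*}
\|\alpha(t)\|_{L^\infty}+\|\beta(t)\|_{L^\infty}+\|\gamma(t)\|_{L^\infty}+\|\delta(t)\|_{L^\infty}\le C_T,
\end{equation*}
with $C_T$ depending only on $\sup_{t\le T}\|X(t)\|_E$ and on the $L^\infty$ norms of the initial data listed in the definition of $\mathcal{A}$.

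Next I would compare $(\alpha,\beta,\gamma,\delta)$ with $(\zeta_\xi,U_\xi,h,\bar r)$. Because $X\in C^1([0,T],E)$ and the differentiation maps $\partial_\xi\colon V\to L^2(\Real)$ and $\partial_\xi\colon H^1(\Real)\to L^2(\Real)$ are bounded, the four-tuple $(\zeta_\xi,U_\xi,h,\bar r)$ lies in $C^1([0,T],[L^2(\Real)]^4)$ and satisfies \eqref{eq:govsysder} as an identity in $C([0,T],L^2(\Real))$. Selecting jointly measurable representatives and applying Fubini, for almost every $\xi\in\Real$ the section $t\mapsto(\zeta_\xi,U_\xi,h,\bar r)(t,\xi)$ is absolutely continuous on $[0,T]$ and solves the same linear system \eqref{eq:sysabc} with the same pointwise coefficients. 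For $\xi\in\mathcal{A}$, which has full measure, the initial values agree by construction, so uniqueness of the linear Cauchy problem gives \eqref{eq:abczUH}. Combined with the bound from the previous step, this forces $(\zeta_\xi,U_\xi,h,\bar r)\in C([0,T],[L^\infty(\Real)]^4)$, hence $X\in C^1([0,T],E\cap([W^{1,\infty}(\Real)]^2\times[L^\infty(\Real)]^2))$.

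The main obstacle is this bridging step: the identity \eqref{eq:govsysder} is known only as an $L^2$-valued equation, while \eqref{eq:sysabc} is solved pointwise. Making rigorous the claim that for almost every $\xi$ the section $t\mapsto (\zeta_\xi,U_\xi,h,\bar r)(t,\xi)$ is absolutely continuous and solves \eqref{eq:sysabc} requires a careful choice of jointly measurable representatives, a Fubini argument allowing one to swap pointwise evaluation and $t$-integration of $\partial_t$, and the verification that the pointwise $H^1$-continuous representative of $P-\tfrac12 k^2-U^2$ agrees almost everywhere with the one driving the $L^2$-valued equation. Once this identification is in hand, the remainder is a one-line Gronwall estimate applied to the difference of the two vector-valued solutions.
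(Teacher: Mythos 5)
Your proposal is correct and follows, in substance, the argument that the paper delegates to \cite[Lemma~2.4]{HolRay:07}: freeze $U$ and $P-\tfrac12k^2-U^2$ as given coefficients (continuous in $(t,\xi)$ by Lemma~\ref{lem:PQ} and the embedding $H^1(\Real)\hookrightarrow C_b(\Real)$), observe that \eqref{eq:sysabc} is then an affine ODE pointwise in $\xi$, obtain the uniform $L^\infty$ bound by Gronwall, and identify $(\alpha,\beta,\gamma,\delta)$ with $(\zeta_\xi,U_\xi,h,\bar r)$ by uniqueness. The one place where you deviate is the bridging step, which you rightly flag as the delicate point: you run the identification by showing that, for a.e.\ $\xi$, the section $t\mapsto(\zeta_\xi,U_\xi,h,\bar r)(t,\xi)$ is absolutely continuous and solves \eqref{eq:sysabc}, which requires jointly measurable representatives and a Fubini argument for the Bochner integral $X(t)=X_0+\int_0^tF(X(s))\,ds$. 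This can be made rigorous (choose a jointly measurable representative of the $C([0,T],L^2)$ integrand, apply Fubini to get the sectionwise integral identity for all $t$ in a countable dense set, then extend by continuity), but the standard and cleaner route is the reverse one: check that the pointwise-constructed $t\mapsto(\alpha(t,\cdot),\beta(t,\cdot),\gamma(t,\cdot),\delta(t,\cdot))$ defines a continuous curve in $[L^2(\Real)]^4$ (measurability in $\xi$ follows from continuous dependence of the affine ODE on $\xi$, and square-integrability from the Gronwall bound together with the affine structure) which satisfies the same Banach-space-valued integral equation as $(\zeta_\xi,U_\xi,h,\bar r)$ with the same initial datum, and invoke uniqueness of the $L^2$-valued Cauchy problem. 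That direction avoids the representative/Fubini issues entirely, since everything about $(\alpha,\beta,\gamma,\delta)$ is controlled by construction. Either way the conclusion $X\in C^1([0,T],E\cap([W^{1,\infty}(\Real)]^2\times[L^\infty(\Real)]^2))$ follows as you state.
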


Thus this lemma allows us to pick a special representative for $(\zeta_\xi, U_\xi, h,\bar r)$  given by 
$(\alpha,\beta,\gamma,\delta)$, which is defined for all $\xi\in \Real$ and which, for any given $\xi$, satisfies the ordinary differential equation \eqref{eq:sysabc}. In what follows we will of course identify the two and set $(\zeta_\xi, U_\xi,h,\bar r)$ equal to $(\alpha,\beta,\gamma,\delta)$. 

\begin{proof}
The proof is similar to the one of \cite[Lemma 2.4]{HolRay:07} and hence we refer the interested reader there.
\end{proof}

We define the set $\G$ as follows.

\begin{definition}
\label{def:F}
The set $\G$ is composed of all $(\zeta,U,h,r)\in
E$ such that
\begin{subequations}
\label{eq:lagcoord}
\begin{align}
\label{eq:lagcoord1}
&(\zeta,U,h,r)\in \left[\Winf\right]^2\times [L^\infty(\Real)]^2,\\
\label{eq:lagcoord2}
&y_\xi\geq0, h\geq0, y_\xi+h>0
\text{  almost everywhere},\\
\label{eq:lagcoord3}
&y_\xi h=U_\xi^2+\bar r^2\text{ almost everywhere},
\end{align}
\end{subequations}
where we denote $y(\xi)=\zeta(\xi)+\xi$.
\end{definition}

The set $\G$ is preserved by the flow and for
initial data in $\G$ we obtain global solutions.

\begin{lemma}\label{lem:3.7}
  Given initial data $X_0=(\zeta_0, U_0, h_0,r_0)$ in
  $\mathcal{G}$, let $X(t)=(\zeta(t), U(t), h(t),r(t))$
  be the short-time solution of \eqref{eq:chsyseq}
  in $C^1([0,T],E)$ for some $T>0$ with initial data
  $(\zeta_0, U_0, h_0,r_0)$.  Then
\begin{enumerate}[(i)]
 \item $X(t)$ belongs to $\mathcal{G}$ for all $t\in[0,T]$, 
  \item for almost every $t\in[0,T]$, we have $y_\xi(t,\xi)>0$ for almost every $\xi\in\Real$.
\end{enumerate}
\end{lemma}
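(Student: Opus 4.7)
The plan is to work pointwise in $\xi$: by Lemma~\ref{lem:abczUH} the quadruple $(\zeta_\xi,U_\xi,h,\bar r)(t,\xi)$ is, for every $\xi\in\Real$, a genuine solution of the ODE~\eqref{eq:sysabc}, so all the pointwise conditions in Definition~\ref{def:F} are amenable to ODE analysis. Condition~\eqref{eq:lagcoord1} is the easiest: \eqref{eq:sysabc} is affine in $(\alpha,\beta,\gamma,\delta)$ with coefficients built from $U$, $P$, $k$, and Theorem~\ref{thm:short} together with Lemma~\ref{lem:PQ} and the embedding $H^1\hookrightarrow L^\infty$ make those coefficients uniformly bounded on $[0,T]\times\Real$. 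A pointwise Gronwall estimate then gives $|(\zeta_\xi,U_\xi,h,\bar r)(t,\xi)|\leq C(T)|(\zeta_{0,\xi},U_{0,\xi},h_0,\bar r_0)(\xi)|$ uniformly in $\xi$, which preserves $[\Winf]^2\times[\Linf]^2$.

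Next I would verify the algebraic identity \eqref{eq:lagcoord3} by differentiating $y_\xi h-U_\xi^2-\bar r^2$ in $t$ using the ODE. The $U_\xi h$ term from $\frac{d}{dt}(y_\xi h)$ cancels the $U_\xi h$ coming from $2U_\xi\cdot\frac12 h$ in $\frac{d}{dt}(U_\xi^2)$; the $2(U^2+\tfrac12 k^2-P)y_\xi U_\xi$ terms cancel likewise; and the $\pm 2kU_\xi\bar r$ contributions from $\frac{d}{dt}(U_\xi^2)$ and $\frac{d}{dt}(\bar r^2)$ annihilate one another. Thus the expression is conserved pointwise, and since it vanishes a.e.~at $t=0$, the identity persists on $[0,T]$.

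The heart of the proof is \eqref{eq:lagcoord2}. Fix $\xi$ where the three conditions and \eqref{eq:lagcoord3} hold initially, and set
\[
\tau^* = \sup\bigl\{\tau\in[0,T]:y_\xi(s,\xi),h(s,\xi)\geq 0\text{ and }y_\xi(s,\xi)+h(s,\xi)>0\text{ for all }s\in[0,\tau]\bigr\}.
\]
Assume $\tau^*<T$; by continuity $y_\xi(\tau^*),h(\tau^*)\geq 0$, leaving three cases. If $y_\xi(\tau^*)=h(\tau^*)=0$, the identity just proved forces $U_\xi(\tau^*)=\bar r(\tau^*)=0$; but direct inspection shows $(y_\xi,U_\xi,h,\bar r)\equiv(0,0,0,0)$ is a fixed point of~\eqref{eq:sysabc} (the factor $(1+\alpha)=y_\xi=0$ kills the only nontrivial term in the $\dot\beta$ equation), so uniqueness of the linear ODE run backward from $\tau^*$ would force the quadruple to vanish for all $t$, contradicting $y_{0,\xi}(\xi)+h_0(\xi)>0$. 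If $y_\xi(\tau^*)=0$ and $h(\tau^*)>0$, then again $U_\xi=\bar r=0$ at $\tau^*$ by the identity, and~\eqref{eq:sysabc} yields $(y_\xi)_t(\tau^*)=0$ and $(y_\xi)_{tt}(\tau^*)=h(\tau^*)/2>0$; hence $y_\xi$ has a strict local minimum at $\tau^*$ and the three conditions extend past $\tau^*$, contradicting the supremum. If $h(\tau^*)=0$ and $y_\xi(\tau^*)>0$, then in the open neighborhood where $y_\xi>0$ the algebraic identity rewrites as $h=(U_\xi^2+\bar r^2)/y_\xi\geq 0$, again a contradiction.

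Part~(ii) follows immediately: for each $\xi$ in the full-measure set where the initial conditions hold, any zero of $t\mapsto y_\xi(t,\xi)$ satisfies $h>0$ (by $y_\xi+h>0$) and hence has $(y_\xi)_{tt}=h/2>0$, so these zeros are isolated in $t$ and form a null set; Fubini on $[0,T]\times\Real$ then gives $\meas\{\xi:y_\xi(t,\xi)=0\}=0$ for almost every $t$. I expect the main obstacle to be the case analysis in the preceding paragraph, and in particular the simultaneous vanishing case, where the argument crucially combines the newly established algebraic identity with the observation that the origin is a fixed point of~\eqref{eq:sysabc} in order to invoke ODE uniqueness in both time directions.
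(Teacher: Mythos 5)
Your proof is correct and follows essentially the same route as the paper: the paper's own argument consists of exactly your conservation computation for $y_\xi h-U_\xi^2-\bar r^2$ (with the same cancellations) and otherwise defers to \cite[Lemma~2.7]{HolRay:07}, whose content is precisely the pointwise ODE analysis via Lemma~\ref{lem:abczUH}, the case analysis at the first time the sign conditions in \eqref{eq:lagcoord2} could fail, and the Fubini argument for part~(ii) that you reconstruct. One cosmetic remark: since \eqref{eq:sysabc} is affine with inhomogeneous term $(U^2+\frac{1}{2}k^2-P)$, the Gronwall estimate should read $\abs{(\zeta_\xi,U_\xi,h,\bar r)(t,\xi)}\leq C(T)\big(1+\abs{(\zeta_{0,\xi},U_{0,\xi},h_0,\bar r_0)(\xi)}\big)$ rather than a purely multiplicative bound, which does not affect the conclusion.
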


\begin{proof}
  The proof follows the same lines as in
  \cite[Lemma 2.7]{HolRay:07}. We prove only
  \eqref{eq:lagcoord3}. From the governing
  equations, for any given $\xi\in\Real$, we get
  \begin{equation*}
    \frac{d}{dt}(y_\xi h)=U_\xi h+2y_\xi(U^2+\frac12k^2-P)U_\xi,
  \end{equation*}
  \begin{equation*}
    \frac{d}{dt}U_\xi^2=2U_\xi(\frac12h+(U^2+\frac12k^2-P)y_\xi+k\bar r)
  \end{equation*}
  and
  \begin{equation*}
    \frac{d}{dt}\bar r^2=-2k\bar rU_\xi.
  \end{equation*}
  Hence, $\frac{d}{dt}(y_\xi h-U_\xi^2-\bar
  r^2)=0$ and \eqref{eq:lagcoord3} is proved.
\end{proof}

We are now ready to prove global existence of solutions to \eqref{eq:chsyseq}.

\begin{theorem}
\label{th:global}
For any $X_0=(y_0,U_0,h_0,r_0)\in\G$, the system \eqref{eq:chsyseq} admits a
unique global solution
$X(t)=(y(t),U(t),h(t),r(t))$ in $C^1(\Real_+,E)$
with initial data $X_0=(y_0,U_0,h_0,r_0)$. We have $X(t)\in\G$ for all times. If we
equip $\G$ with the topology inducted by the
$E$-norm, then the mapping
$S\colon\G\times\Real_+\to\G$ defined as
\begin{equation*}
S_t(\bar X)=X(t)
\end{equation*}
is a continuous semigroup. More precisely, given
$M>0$ and $T>0$, there exists a constant $C_M$
which only depends on $M$ and $T$ such that, for
any two elements $X_\alpha$, $X_\beta\in \G$ such
that $\norm{X_\alpha}_{E}\leq M$,
$\norm{X_\beta}_{E}\leq M$, we have
\begin{equation}\label{est:time}
 \norm{S_tX_\alpha-S_tX_\beta}\leq C_M\norm{X_\alpha-X_\beta}
\end{equation}
for any $t\in[0,T]$.
\end{theorem}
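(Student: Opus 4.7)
The plan is to promote the short-time solution from Theorem~\ref{thm:short} to a global one via a continuation argument, and then to extract the semigroup and Lipschitz statements directly from the integral form of \eqref{eq:chsyseq}. Because the existence time furnished by Theorem~\ref{thm:short} depends only on $\norm{X_0}_E$, and $\G$ is preserved by the flow by Lemma~\ref{lem:3.7}, global existence reduces to an a priori bound of the form $\norm{X(t)}_E \leq C(T, \norm{X_0}_E)$ on every bounded interval $[0,T]$.

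I would establish such a bound in several coupled steps. First, $c$ and $k$ are constant in time by \eqref{eq:chsyseq4} and \eqref{eq:chsyseq8}. Second, the constraint \eqref{eq:lagcoord3} combined with $y_\xi, h \geq 0$ gives the pointwise inequality $|U_\xi|, |\bar r| \leq \sqrt{y_\xi h} \leq \tfrac{1}{2}(y_\xi + h)$. Third, differentiating $y_\xi + h$ along the flow using \eqref{eq:govsysder1} and \eqref{eq:govsysder4} yields
\[
\frac{d}{dt}(y_\xi + h) = U_\xi\bigl(1 + 2U^2 + k^2 - 2P\bigr),
\]
and combining this with the previous step and the pointwise ODE representation of Lemma~\ref{lem:abczUH} gives, by Gronwall, a pointwise bound
\[
(y_\xi + h)(t,\xi) \leq (y_{0,\xi} + h_0)(\xi)\exp\Bigl(\tfrac{1}{2}\int_0^t \bigl|1 + 2U^2 + k^2 - 2P\bigr|(\tau,\xi)\,d\tau\Bigr),
\]
so once $\norm{U}_{L^\infty}$ and $\norm{P}_{L^\infty}$ are under control on $[0,T]$ the quantity $y_\xi + h$ is bounded uniformly in $(t,\xi)$. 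Fourth, Lemma~\ref{lem:PQ} bounds $\norm{P - \tfrac12 k^2 - U^2}_{H^1}$ and $\norm{Q}_{H^1}$ in terms of $\norm{X}_E$, and the embedding $H^1(\Real) \hookrightarrow L^\infty(\Real)$ supplies the pointwise bounds needed in the previous step; coupling with $L^1$-control of $h$ (derived from $\frac{d}{dt}\int h\,d\xi$ via steps two and three) upgrades pointwise bounds to the $L^2$ bounds appearing in the $E$-norm through $\norm{h}_{L^2}^2 \leq \norm{h}_{L^\infty}\norm{h}_{L^1}$ and, using the constraint, $\norm{U_\xi}_{L^2}^2 + \norm{\bar r}_{L^2}^2 \leq \norm{y_\xi}_{L^\infty}\norm{h}_{L^1}$. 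The remaining pieces $\norm{\zeta}_V$ and $\norm{\bar U}_{H^1}$ come from integrating $\zeta_t = U$ and $\bar U_t = -Q - c(\chi'\circ y)U$, whose right-hand sides are already controlled. Chaining these estimates closes a Gronwall-type inequality on $\norm{X(t)}_E$.

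Uniqueness from Theorem~\ref{thm:short} immediately delivers the semigroup identity $S_{t+s} = S_t \circ S_s$. For the Lipschitz bound \eqref{est:time}, the a priori bound above yields a uniform $E$-bound $M_T$ on both $S_t X_\alpha$ and $S_t X_\beta$ over $[0,T]$. Subtracting the integral equations and applying Lemma~\ref{lem:PQ} together with standard product estimates on the bounded set $\{\norm{X}_E \leq M_T\}$ gives $\norm{F(X_\alpha) - F(X_\beta)}_E \leq C_{M_T}\norm{X_\alpha - X_\beta}_E$, where $F$ denotes the right-hand side of \eqref{eq:chsyseq}; a final Gronwall argument closes the estimate with $C_M$ depending only on $M$ and $T$.

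The main obstacle is the a priori bound, particularly the upgrade from pointwise to $L^2$ estimates. Because the asymptotic constants $c, k$ need not vanish, the naive conservation law $\frac{d}{dt}\int h\,d\xi = 0$ of the vanishing-asymptotics CH case is lost, and one must track both $\norm{h}_{L^\infty}$ (pointwise Gronwall) and $\norm{h}_{L^1}$ (energy-type estimate) simultaneously. The constraint \eqref{eq:lagcoord3} and the decomposition of each unknown into an $L^2$-decaying part plus an asymptotic constant are precisely the structural ingredients that allow the loop of estimates to close.
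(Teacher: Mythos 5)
Your overall architecture (short-time existence with a lifespan depending only on $\norm{X_0}_E$, preservation of $\G$, an a priori bound on $\norm{X(t)}_E$, continuation, and then Gronwall on the difference of the integral equations using the local Lipschitz property of $F$ to get \eqref{est:time}) is the same as the paper's, and several of your ingredients are correct: the identity $\tfrac{d}{dt}(y_\xi+h)=U_\xi(1+2U^2+k^2-2P)$, the pointwise bound $\abs{U_\xi},\abs{\bar r}\leq\tfrac12(y_\xi+h)$ from \eqref{eq:lagcoord3}, and the interpolation $\norm{h}_{L^2}^2\leq\norm{h}_{L^\infty}\norm{h}_{L^1}$ all appear, implicitly or explicitly, in the argument the paper relies on. The Lipschitz estimate at the end is also exactly how the paper concludes.

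However, there is a genuine gap in the a priori bound, at the step where you claim that ``chaining these estimates closes a Gronwall-type inequality on $\norm{X(t)}_E$.'' You obtain $\norm{U}_{L^\infty}$ and $\norm{P}_{L^\infty}$ from Lemma~\ref{lem:PQ} plus the embedding $H^1\hookrightarrow L^\infty$, i.e.\ in terms of $\norm{X}_E$ itself; but Lemma~\ref{lem:PQ} only gives Lipschitz bounds \emph{on bounded sets}, with constants that grow with the bound, and $P$, $Q$ are quadratic in $U$. The resulting differential inequality is therefore of the form $\tfrac{d}{dt}N\leq\Phi(N)$ with $\Phi$ superlinear, which is consistent with finite-time blow-up and does not yield a global bound. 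The paper closes the loop by isolating the energy functional
\begin{equation*}
  \Gamma(t)=\int_\Real \bar U^2 y_\xi\,d\xi+\norm{h}_{L^1},
\end{equation*}
which has two properties your chain lacks: it dominates $\norm{\bar U}_{L^\infty}^2$, $\norm{P}_{L^\infty}$ and $\norm{Q}_{L^\infty}$ \emph{linearly} (up to constants depending only on $c$, $k$, $\chi$), and it satisfies the genuinely linear inequality $\tfrac{d}{dt}\Gamma\leq C(\Gamma+1)$ with $C$ independent of $\norm{X}_E$, so that Gronwall gives a global bound. Your outline tracks $\norm{h}_{L^1}$ but omits the companion term $\int\bar U^2y_\xi\,d\xi$; without it you can bound neither $\norm{U}_{L^\infty}$ nor $\tfrac{d}{dt}\norm{h}_{L^1}$ (whose integrand $2(U^2+\tfrac12k^2-P)U_\xi$ must be paired in $L^2$--$L^2$, since $U_\xi\notin L^1$) in a way that is linear in the controlled quantities. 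Adding this term, i.e.\ replacing your step four by the estimate for $\Gamma$, repairs the argument and reduces it to the paper's proof.
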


\begin{proof}
  The proof follows closely the one of
  \cite[Theorem 3.7]{GHR:12}, and hence we will
  only prove \eqref{est:time}.  Therefore, one can
  show as in \cite[Theorem 3.7]{GHR:12}, that
\begin{equation}
 \Gamma(t)=\int_\Real \bar U^2y_\xi d\xi+\norm{h}_{L^1},
\end{equation}
satisfies
\begin{equation}
 \frac{d}{dt}\Gamma(t)\leq C(\Gamma(t)+1),
\end{equation}
where $C$ denotes some constant dependent on $c$ (independent of time) and the partition function $\chi$ we choose.
Thus  Gronwall's inequality yields
\begin{equation}\label{est:Gam}
 \Gamma(t)+1\leq (\Gamma(0)+1)e^{Ct},
\end{equation}
where $C$ depends on $c$ and the partition function $\chi$. Moreover, 
\begin{equation}
 \Gamma_0\leq \norm{\bar U_0}_{L^2}^2+\norm{\bar U_0}_{L^\infty}\norm{\bar U_0}_{L^2}\norm{\zeta_{0,\xi}}_{L^2}+\norm{h_0}_{L^1}.
\end{equation}
Hence $\Gamma(t)$ only depends on $t$, the partition function $\chi$ and $\norm{X_0}_{E}$. Thus, using the estimates derived in \cite[Theorem 3.7]{GHR:12}, one obtains that $\norm{S_t(X_\alpha)}_E$ depends only on $\norm{X_0}_{E}$, $t$, and the partition function $\chi$ we choose. Using that $F$ is Lipschitz continuous, we finally end up with \eqref{est:time}. 
\end{proof}

\section{From Eulerian to Lagrangian coordinates and vice versa} \label{sec:eulerlagrange}

\begin{definition} \label{def:D} The set $\D$ is
  composed of all pairs $(u,\rho,\mu)$ such that
  $u\in H_{0,\infty}(\Real)$, $\rho\in L^2_{\rm
    const}(\Real)$ and $\mu$ is a positive finite
  Radon measure whose absolutely continuous part,
  $\muac$, satisfies
\begin{equation}
\label{eq:abspart}
\muac=(u_x^2+\bar\rho^2)\,dx.
\end{equation}
\end{definition}

\begin{definition}\label{def:G}
 We denote by $G$ the subgroup of the group of homeomorphisms from $\Real$ to $\Real$ such that 
\begin{subequations}
\label{eq:Gcond}
 \begin{align}
  \label{eq:Gcond1}
  f-\id \text{ and } f^{-1}-\id &\text{ both belong to } W^{1,\infty}(\Real), \\
  \label{eq:Gcond2}
  f_\xi-1 &\text{ belongs to } L^2(\Real),
 \end{align}
\end{subequations}
where $\id$ denotes the identity function. Given $\kappa>0$, we denote by $G_\kappa$ the subset $G_\kappa$ of $G$ defined by 
\begin{equation}
 G_\kappa=\{ f\in G\mid  \norm{f-\id}_{W^{1,\infty}}+\norm{f^{-1}-\id}_{W^{1,\infty}}\leq\kappa\}. 
\end{equation}
\end{definition}

\begin{lemma}[{\cite[Lemma 3.2]{HolRay:07}}] 
\label{lem:charH}
Let $\kappa\geq0$. If $f$ belongs to $G_\kappa$,
then $1/(1+\kappa)\leq f_\xi\leq 1+\kappa$ almost
everywhere. Conversely, if $f$ is absolutely
continuous, $f-\id\in W^{1,\infty}(\Real)$, $f$ satisfies
\eqref{eq:Gcond2} and there exists $d\geq 1$ such
that $1/d\leq f_\xi\leq d$ almost everywhere, then
$f\in G_\kappa$ for some $\kappa$ depending only
on $d$ and $\norm{f-\id}_{W^{1,\infty}}$.
\end{lemma}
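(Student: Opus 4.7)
For the forward direction, the plan is to exploit the chain-rule relation between $f$ and $f^{-1}$. Since $f - \id \in W^{1,\infty}(\Real)$ with norm at most $\kappa$, I immediately obtain $|f_\xi - 1| \leq \kappa$ a.e., which gives the upper bound $f_\xi \leq 1+\kappa$. For the lower bound I would use that $f^{-1} - \id \in W^{1,\infty}(\Real)$ with the same norm bound, so $(f^{-1})_y \leq 1+\kappa$ a.e. Because $f$ is absolutely continuous with $f_\xi \geq 0$ (which follows once we know $f$ is an orientation-preserving homeomorphism, a consequence of $f$ being a homeomorphism of $\Real$ with $f-\id$ bounded), the almost-everywhere identity $(f^{-1})'(f(\xi)) \cdot f_\xi(\xi) = 1$ yields $f_\xi(\xi) = 1/(f^{-1})'(f(\xi)) \geq 1/(1+\kappa)$ a.e.

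For the converse direction, I would first show that $f$ is a homeomorphism. Absolute continuity together with $f_\xi \geq 1/d > 0$ a.e. forces $f$ to be strictly increasing; combined with $\|f-\id\|_{L^\infty} < \infty$ this gives $\lim_{\xi\to\pm\infty}f(\xi) = \pm\infty$, so $f\colon\Real\to\Real$ is a continuous strictly increasing bijection, hence a homeomorphism. Then $f^{-1}$ is absolutely continuous with $(f^{-1})_y(y) = 1/f_\xi(f^{-1}(y))$ a.e., so $1/d \leq (f^{-1})_y \leq d$, giving $\|(f^{-1})_y - 1\|_{L^\infty} \leq d-1$. The $L^\infty$ bound for $f^{-1}-\id$ follows from $\|f^{-1}-\id\|_{L^\infty} = \|f-\id\|_{L^\infty}$ (setting $y = f(\xi)$ gives $f^{-1}(y) - y = -(f(\xi)-\xi)$), so $f^{-1}-\id \in W^{1,\infty}(\Real)$ with norm depending only on $d$ and $\|f-\id\|_{W^{1,\infty}}$.

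It remains to verify \eqref{eq:Gcond2} for $f^{-1}$, namely $(f^{-1})_y - 1 \in L^2(\Real)$. I would do this by change of variables: with $y = f(\xi)$ and $dy = f_\xi\,d\xi$,
\begin{equation*}
\int_\Real \bigl((f^{-1})_y(y) - 1\bigr)^2 dy = \int_\Real \Bigl(\frac{1}{f_\xi(\xi)} - 1\Bigr)^2 f_\xi(\xi)\,d\xi = \int_\Real \frac{(f_\xi - 1)^2}{f_\xi}\,d\xi \leq d\,\|f_\xi - 1\|_{L^2}^2,
\end{equation*}
which is finite by hypothesis. Combining these estimates, $\|f^{-1}-\id\|_{W^{1,\infty}}$ is controlled solely by $d$ and $\|f-\id\|_{W^{1,\infty}}$, so choosing $\kappa$ as the sum of the two $W^{1,\infty}$-norms yields $f \in G_\kappa$ for an admissible $\kappa = \kappa(d, \|f-\id\|_{W^{1,\infty}})$.

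The main obstacle is really the second direction, specifically the careful bookkeeping needed to show $f^{-1}$ inherits all three properties \eqref{eq:Gcond1}--\eqref{eq:Gcond2} from $f$ with quantitative bounds. The chain rule identity on sets of full measure requires justification via absolute continuity of $f$ and $f^{-1}$, and the $L^2$ estimate hinges on the pointwise lower bound $f_\xi \geq 1/d$ to control $1/f_\xi$ after the change of variables. Since this statement is essentially taken verbatim from \cite[Lemma 3.2]{HolRay:07}, the author will presumably cite that source rather than reproduce the argument, but the plan above is the natural self-contained route.
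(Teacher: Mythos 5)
Your proof is correct, and the paper itself offers no argument for this lemma beyond the citation to \cite[Lemma 3.2]{HolRay:07}; the proof there proceeds exactly along your lines (chain rule for the a.e.\ differentiable inverse in the forward direction, and the change of variables $y=f(\xi)$ with the lower bound $f_\xi\geq 1/d$ to transfer the $W^{1,\infty}$ and $L^2$ bounds to $f^{-1}$ in the converse). The points you flag as needing care --- justifying the a.e.\ identity $(f^{-1})'(f(\xi))\,f_\xi(\xi)=1$ via the Luzin property of the Lipschitz map $f^{-1}$, and using $f_\xi\geq 1/d$ to control $1/f_\xi$ in the $L^2$ estimate for \eqref{eq:Gcond2} --- are exactly the right ones and are handled correctly.
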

We define the subsets $\mathcal{F}_\kappa$ and $\mathcal{F}$ of $\mathcal{G}$
as follows
\begin{equation*}
\mathcal{F}_\kappa=\{X=(y,U,h,r)\in\mathcal{G}\mid  y+H\in G_\kappa\},
\end{equation*}
and
\begin{equation*}
\mathcal{F}=\{X=(y,U,h,r)\in\mathcal{G}\mid  y+H\in G\},
\end{equation*}
where $H(t,\xi)$ is defined by 
\begin{equation*}
 H(t,\xi)=\int_{-\infty}^\xi h(t,\tau)d\tau,
\end{equation*}
which is finite since, from \eqref{eq:lagcoord3}, we have
$h=U_\xi^2+\bar r^2-\zeta_\xi h$  and therefore $h\in L^1(\Real)$.
For $\kappa=0$, we have $G_0=\{\id\}$. As we shall see,
the space $\mathcal{F}_0$ will play a special role. These
sets are relevant only because they are preserved
by the governing equation \eqref{eq:chsyseq} as the
next lemma shows. In particular, while the mapping
$\xi\mapsto y(t,\xi)$ may not be a diffeomorphism
for some time $t$, the mapping $\xi\mapsto
y(t,\xi)+H(t,\xi)$ remains a diffeomorphism for
all times $t$.

\begin{lemma}\label{lem:Fpres}
 The space $\mathcal{G}$ is preserved by the governing equations \eqref{eq:chsyseq}. More precisely, given $\kappa$, $T\geq 0$, and $X_0\in \mathcal{G}_\kappa$, we have 
\begin{equation*}
 S_t(X_0)\in \mathcal{G}_{\kappa'}, 
\end{equation*}
for all $t\in[0,T]$ where $\kappa'$ only depends on $T$, $\kappa$, and $\norm{X_0}_E$.
\end{lemma}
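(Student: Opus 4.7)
I would prove this by applying the characterization in Lemma~\ref{lem:charH} to $f(t,\xi) = y(t,\xi) + H(t,\xi)$. Since $f - \id = \zeta + H$ and $f_\xi - 1 = \zeta_\xi + h$, the task reduces to showing, for $t \in [0,T]$ with constants depending only on $\kappa$, $T$, and $\norm{X_0}_E$: (i) a uniform $W^{1,\infty}$ bound on $\zeta + H$, (ii) an $L^2$ bound on $\zeta_\xi + h$, and (iii) a uniform pointwise bound $1/d \leq y_\xi + h \leq d$ almost everywhere. The hypothesis $X_0 \in \mathcal{F}_\kappa$ supplies via the forward implication of Lemma~\ref{lem:charH} the initial estimate $1/(1+\kappa) \leq y_{0,\xi} + h_0 \leq 1+\kappa$.

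The crucial step is (iii). By Lemma~\ref{lem:abczUH}, for every fixed $\xi$ in a set of full measure the quadruple $(y_\xi,U_\xi,h,\bar r)(t,\xi)$ satisfies the pointwise ODE \eqref{eq:sysabc}, and by Lemma~\ref{lem:3.7} the identity $y_\xi h = U_\xi^2 + \bar r^2$ persists in time. Setting $\Lambda = y_\xi + h$ and $A = U^2 + \tfrac{1}{2}k^2 - P$, equations \eqref{eq:govsysder1} and \eqref{eq:govsysder4} combine to
\[
\frac{d}{dt}\Lambda = (1 + 2A)\, U_\xi.
\]
The arithmetic--geometric inequality yields $U_\xi^2 \leq y_\xi h \leq \Lambda^2/4$, hence $|U_\xi| \leq \Lambda/2$ and therefore $|\dot\Lambda| \leq \tfrac{1}{2}(1 + 2|A|)\,\Lambda$. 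Lemma~\ref{lem:PQ}, combined with the semigroup bound \eqref{est:time} of Theorem~\ref{th:global}, gives $\norm{A(t)}_{L^\infty} \leq C_T$ uniformly on $[0,T]$, with $C_T$ depending only on $\norm{X_0}_E$ and $T$. Gronwall's inequality then delivers
\[
\Lambda(0)\, e^{-C'_T t} \leq \Lambda(t) \leq \Lambda(0)\, e^{C'_T t},
\]
so $\Lambda(t)$ is bounded above and bounded away from zero uniformly in $t \in [0,T]$ and in a.e.\ $\xi \in \Real$, with constants depending only on $\kappa$, $T$, and $\norm{X_0}_E$.

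For (i) and (ii), the $L^\infty$ bound on $\zeta$ follows from $\zeta \in C_b(\Real)$ together with Theorem~\ref{th:global}, while $|H| \leq \norm{h}_{L^1}$ is controlled through the Gronwall estimate on $\Gamma(t)$ from the proof of Theorem~\ref{th:global}; the $L^\infty$ bound on $\zeta_\xi + h = \Lambda - 1$ is a direct consequence of (iii); and the $L^2$ bound on $\zeta_\xi + h$ is built into the $E$-norm. Invoking the converse direction of Lemma~\ref{lem:charH} then yields $\kappa'$, depending only on $\kappa$, $T$, and $\norm{X_0}_E$, such that $y(t) + H(t) \in G_{\kappa'}$, i.e., $X(t) \in \mathcal{F}_{\kappa'}$. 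The principal obstacle is the positive lower bound in (iii): absent the conservation law $y_\xi h = U_\xi^2 + \bar r^2$ and the resulting estimate $|U_\xi| \leq \Lambda/2$, the ODE for $\Lambda$ could not be closed and $\Lambda$ could in principle vanish, destroying the homeomorphism property of $y + H$.
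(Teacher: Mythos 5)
Your argument is correct and is essentially the standard proof the paper delegates to \cite[Lemma 3.3]{HolRay:07}: one derives the ODE for $y_\xi+h$, closes it via $U_\xi^2\leq y_\xi h\leq (y_\xi+h)^2/4$ (here using $y_\xi h=U_\xi^2+\bar r^2$ from Lemma~\ref{lem:3.7}), applies Gronwall in both directions, and invokes the two implications of Lemma~\ref{lem:charH}. The only cosmetic slip is that the uniform bound on $\norm{A(t)}_{L^\infty}$ comes from the a priori bound on $\norm{S_t(X_0)}_E$ established in the proof of Theorem~\ref{th:global} rather than from the Lipschitz estimate \eqref{est:time}, but this does not affect the argument.
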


\begin{proof}  
See \cite[Lemma 3.3]{HolRay:07}.
\end{proof}

For the sake of simplicity, for any $X=(y, U, h,r)\in \mathcal{F}$ and any function
$f\in\Gr$, we denote $(y\circ f, U\circ f, h\circ ff_\xi, r\circ f f_\xi)$ by $X\circ f$.

\begin{proposition} The map 
from $\Gr\times\F$ to $\F$ given by $(f,X)\mapsto
X\circ f$ defines an action of the group $\Gr$ on
$\F$.
\end{proposition}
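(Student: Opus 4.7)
The proof splits into two parts: showing $X \circ f \in \mathcal{F}$ whenever $X \in \mathcal{F}$ and $f \in G$ (well-definedness of the map), and verifying the right-action axioms $X \circ \id = X$ and $(X \circ f) \circ g = X \circ (f \circ g)$.

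For well-definedness, pick $\kappa$ with $f \in G_\kappa$. By Lemma~\ref{lem:charH}, $1/(1+\kappa) \leq f_\xi \leq 1+\kappa$ a.e., so the change of variables $s = f(\eta)$ is valid. A routine calculation shows each component of $X \circ f$ has the required regularity for membership in $E$: for instance $\|h \circ f \cdot f_\xi\|_{L^2}^2 \leq (1+\kappa)\|h\|_{L^2}^2$ by substitution, and the decomposition $r \circ f \cdot f_\xi = (\bar r \circ f \cdot f_\xi) + k (y \circ f)_\xi$ (using $r = \bar r + k y_\xi$) identifies $\bar r \circ f \cdot f_\xi$ as the new $L^2$ part and the same $k$ as the new constant in $L^2_{\rm const}$; analogous bounds handle $\zeta$ and $\bar U$. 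The conditions of Definition~\ref{def:F} are preserved: nonnegativity of $(y\circ f)_\xi = y_\xi\circ f \cdot f_\xi$ and of $h \circ f\cdot f_\xi$ follows from $y_\xi,h\geq 0$ and $f_\xi>0$; positivity $(y\circ f)_\xi + h\circ f\cdot f_\xi = (y_\xi+h)\circ f\cdot f_\xi>0$ a.e.\ follows from $y_\xi+h>0$ a.e.; and
\begin{equation*}
(y \circ f)_\xi \,(h \circ f \cdot f_\xi) = (y_\xi h)\circ f \cdot f_\xi^2 = (U_\xi^2 + \bar r^2)\circ f \cdot f_\xi^2 = (U \circ f)_\xi^2 + (\bar r \circ f \cdot f_\xi)^2,
\end{equation*}
which gives \eqref{eq:lagcoord3}.

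The crucial observation is that $y \circ f + H(X \circ f) \in G$: by change of variables,
\begin{equation*}
H(X \circ f)(\xi) = \int_{-\infty}^\xi h(f(\eta)) f_\xi(\eta)\, d\eta = \int_{-\infty}^{f(\xi)} h(s)\, ds = H(X) \circ f(\xi),
\end{equation*}
hence $y \circ f + H(X \circ f) = (y+H) \circ f \in G$ since $y+H \in G$ (as $X \in \mathcal{F}$), $f \in G$, and $G$ is a group under composition. For the action axioms, $X \circ \id = X$ is immediate since $\id_\xi = 1$; associativity follows componentwise via the chain rule: the $y$- and $U$-components are trivial, while
\begin{equation*}
\bigl[(h \circ f) f_\xi\bigr] \circ g \cdot g_\xi = h \circ (f \circ g) \cdot (f_\xi \circ g)\, g_\xi = h \circ (f \circ g) \cdot (f \circ g)_\xi,
\end{equation*}
matching the $h$-component of $X \circ (f \circ g)$, and the $r$-component is analogous. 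I expect the main obstacle to be purely notational bookkeeping, tracking how the decompositions $U = \bar U + c\chi \circ y$ and $r = \bar r + k y_\xi$ interact with right-composition so as to preserve the Banach-space structure of $E$; once the identity $H(X\circ f) = H(X)\circ f$ is noticed, the key $G$-membership reduces to the fact that $G$ is a group.
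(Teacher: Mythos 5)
Your argument is correct and is essentially the standard proof: the paper itself only cites \cite[Proposition 3.4]{HolRay:07}, and the argument given there proceeds exactly as you do, with the identity $H(X\circ f)=H(X)\circ f$ reducing the key membership $y\circ f+H(X\circ f)\in G$ to the group structure of $G$, and the remaining verifications (regularity via Lemma~\ref{lem:charH}, preservation of \eqref{eq:lagcoord2}--\eqref{eq:lagcoord3}, and the action axioms) being the same routine computations you carry out.
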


\begin{proof} 
See \cite[Proposition 3.4]{HolRay:07}. 
\end{proof}

Since $\Gr$ is acting on $\F$, we can consider the
quotient space $\quot$ of $\F$ with respect to the
action of the group $G$. The equivalence relation
on $\F$ is defined as follows: For any
$X,X'\in\F$, we say that $X$ and $X'$ are equivalent if there
exists $f\in\Gr$ such that $X'=X\circ f$. We
denote by $\Pi(X)=[X]$ the projection of $\F$ into the
quotient space $\quot$, and introduce the mapping
$\Gamma\colon\F\rightarrow\F_0$ given by
\begin{equation*}
\Gamma(X)=X\circ (y+H)\inv
\end{equation*}
for any $X=(y,U,h,r)\in\F$. We have $\Gamma(X)=X$ when
$X\in\F_0$. It is not hard to prove that $\Gamma$ is
invariant under the $\Gr$ action, that is,
$\Gamma(X\circ f)=\Gamma(X)$ for any $X\in\F$ and
$f\in\Gr$. Hence, there corresponds to $\Gamma$ a
mapping $\tilde\Gamma$ from the quotient space $\quot$ to
$\F_0$ given by $\tilde\Gamma([X])=\Gamma(X)$ where
$[X]\in\quot$ denotes the equivalence class of
$X\in\F$. For any $X\in\F_0$, we have
$\tilde\Gamma\circ\Pi(X)=\Gamma(X)=X$. Hence, $\tilde\Gamma\circ
\Pi|_{\F_0}=\id|_{\F_0}$. Any topology defined on
$\F_0$ is naturally transported into $\quot$ by
this isomorphism. We equip $\F_0$ with the metric
induced by the $E$-norm, i.e.,
$d_{\F_0}(X,X')=\norm{X-X'}_E$ for all
$X,X'\in\F_0$. Since $\F_0$ is closed in $E$, this
metric is complete. We define the metric on
$\quot$ as
\begin{equation*}
d_\quot([X],[X'])=\norm{\Gamma(X)-\Gamma(X')}_E,
\end{equation*}
for any $[X],[X']\in\quot$. Then, $\quot$ is
isometrically isomorphic with $\F_0$ and the
metric $d_\quot$ is complete.

\begin{lemma}
\label{lem:picont} Given $\alpha\geq 0$.
The restriction of $\Gamma$ to $\F_\alpha$ is a
continuous mapping from $\F_\alpha$ to $\F_0$.
\end{lemma}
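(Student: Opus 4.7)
I would argue by tracking the factor $f := y + H$ that drives $\Gamma$. For any $X=(\zeta,U,h,r)\in \mathcal{F}_\alpha$, Lemma~\ref{lem:charH} gives the two-sided bound $1/(1+\alpha)\leq f_\xi \leq 1+\alpha$ almost everywhere, which provides the essential uniform control throughout. Suppose $X_n\to X$ in $E$ with all $X_n\in \mathcal{F}_\alpha$, and set $f_n = y_n+H_n$. The plan is to prove, in order: (i) $f_n \to f$ with $f_n-f \to 0$ in $L^\infty$ and $f_{n,\xi}-f_\xi \to 0$ in $L^2$; (ii) $f_n^{-1}\to f^{-1}$ in the same topology, with a uniform $G_{\alpha'}$ bound; and (iii) the composition $X_n\circ f_n^{-1}\to X\circ f^{-1}$ in $E$-norm.

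For (i), convergence of $\zeta_n$ and $\zeta_{n,\xi}$ comes directly from $X_n\to X$ in $E$. For $H_n-H$, I would rewrite \eqref{eq:lagcoord3} as $h_n = U_{n,\xi}^2 + \bar r_n^2 - \zeta_{n,\xi} h_n$ and express $h_n - h$ as a sum of products in which at least one factor converges in $L^2$ and the other is either in $L^2$ or is uniformly bounded by $1+\alpha$ (using the $\mathcal{F}_\alpha$ bounds on $h_n$ and $\zeta_{n,\xi}$). This yields $h_n\to h$ in $L^1$, so $\|H_n-H\|_{L^\infty}\leq \|h_n-h\|_{L^1}\to 0$; combined with $h_n\to h$ in $L^2$, step (i) follows. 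For (ii) the Lipschitz bound produces $\|f_n^{-1}-f^{-1}\|_{L^\infty}\leq (1+\alpha)\|f_n-f\|_{L^\infty}\to 0$, and the converse part of Lemma~\ref{lem:charH} yields a uniform $G_{\alpha'}$ bound. For the derivative, I would write
\[
(f_n^{-1})_\xi - (f^{-1})_\xi = \frac{(f_\xi\circ f^{-1}) - (f_{n,\xi}\circ f_n^{-1})}{(f_{n,\xi}\circ f_n^{-1})\,(f_\xi\circ f^{-1})},
\]
whose denominator is bounded below by $1/(1+\alpha)^2$, and split the numerator into $(f_\xi - f_{n,\xi})\circ f_n^{-1}$ (small in $L^2$ by change of variables, using the uniform Jacobian bound) plus $f_\xi\circ f^{-1} - f_\xi\circ f_n^{-1}$ (small in $L^2$ by the $L^\infty$ convergence of $f_n^{-1}$, proved via approximation of $f_\xi - 1$ by $C_c^\infty$ functions).

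For (iii), with $g_n := f_n^{-1}$ and $g := f^{-1}$ uniformly bounded in $G_{\alpha'}$, I would treat each component of $X_n\circ g_n$ separately. The core device is the density-plus-change-of-variables trick: approximate the target $L^2$ function by smooth compactly supported ones, for which composition is trivially continuous in $L^\infty_{\rm loc}$, and use the uniform Jacobian bound from $G_{\alpha'}$ to control the tail error of the approximation uniformly in $n$. The factors $g_{n,\xi}$ appearing in the $h$- and $\bar r$-components are then handled exactly as in (ii). The main obstacle is this last composition step: transferring $L^2$ convergence of the $\xi$-derivatives through a sequence of changing diffeomorphisms without any smoothness assumption on the integrands. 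It is precisely the restriction to $\mathcal{F}_\alpha$ (rather than all of $\mathcal{F}$) that rescues us, supplying the uniform Lipschitz and two-sided Jacobian control on $g_n$ that makes the density approximation uniform in $n$.
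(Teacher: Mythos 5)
Your argument is correct and is essentially the proof the paper relies on: the paper simply cites \cite[Lemma 3.5]{HolRay:07}, whose proof proceeds exactly as you do, namely by establishing convergence of $f_n=y_n+H_n$ (via $h_n\to h$ in $L^1$ from the identity $h=U_\xi^2+\bar r^2-\zeta_\xi h$), then of the inverses using the two-sided Jacobian bounds from Lemma~\ref{lem:charH}, and finally of the compositions by the density-plus-change-of-variables argument. No substantive gaps.
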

\begin{proof}   
See \cite[Lemma 3.5]{HolRay:07}.
\end{proof}

\begin{remark}
The mapping $\Gamma$ is not continuous from $\F$ to
$\F_0$. The spaces $\F_\alpha$ were precisely
introduced in order to make the mapping $\Gamma$
continuous.
\end{remark}

We denote by $S\colon\F\times [0,\infty)\rightarrow \F$
the continuous semigroup which to any initial
data $X_0\in \F$ associates the solution $X(t)$
of the system of differential equations
\eqref{eq:chsyseq} at time $t$. As indicated earlier,
the two-component Camassa--Holm system is invariant with
respect to relabeling.  More precisely, using our
terminology, we have the following result.

\begin{theorem} 
\label{th:sgS} 
For any $t>0$, the mapping $S_t\colon\F\rightarrow\F$
is $\Gr$-equivariant, that is,
\begin{equation}
\label{eq:Hequivar}
S_t(X\circ f)=S_t(X)\circ f
\end{equation}
for any $X\in\F$ and $f\in\Gr$. Hence, the mapping
$\tilde S_t$ from $\quot$ to $\quot$ given by
\begin{equation*}
\tilde S_t([X])=[S_tX]
\end{equation*}
is well-defined. It generates a continuous
semigroup.
\end{theorem}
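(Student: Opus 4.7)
The plan is to reduce everything to a uniqueness argument for the Lagrangian ODE system \eqref{eq:chsyseq}. First I would show equivariance: given $X_0 \in \F$ and $f \in \Gr$, set $X(t) = S_t(X_0)$ and define $\tilde X(t) = X(t) \circ f$ in the sense $(y\circ f, U\circ f, h\circ f\, f_\xi, r\circ f\, f_\xi)$. The goal is to verify that $\tilde X(t)$ solves \eqref{eq:chsyseq} with initial data $X_0 \circ f$; uniqueness from Theorem \ref{th:global} then forces $\tilde X(t) = S_t(X_0 \circ f)$, which is precisely \eqref{eq:Hequivar}.

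The one nontrivial computation is to check that the nonlocal quantities $P$ and $Q$ are compatible with relabeling. Writing $\tilde P$ and $\tilde Q$ for the functions obtained by inserting $\tilde X$ into \eqref{eq:P}--\eqref{eq:Q}, I would perform the change of variables $\eta \mapsto f(\eta)$ in the integrals. Using $\tilde y_\xi(\eta) = y_\xi(f(\eta)) f_\xi(\eta)$, $\tilde h(\eta) = h(f(\eta)) f_\xi(\eta)$, $\tilde{\bar r}(\eta) = \bar r(f(\eta)) f_\xi(\eta)$ and $\tilde y(\xi) = y(f(\xi))$, the Jacobian $f_\xi\, d\eta$ cancels exactly with the extra $f_\xi$ factors, yielding $\tilde P(\xi) = P(f(\xi))$ and $\tilde Q(\xi) = Q(f(\xi))$. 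Since the constants $c$ and $k$ are manifestly unchanged, direct differentiation then gives
\begin{equation*}
\tilde y_t = U\circ f = \tilde U,\quad \tilde U_t = -Q\circ f = -\tilde Q,\quad \tilde r_t = 0,
\end{equation*}
and for the energy density $\tilde h_t = h_t\circ f\, f_\xi = 2(U^2+\tfrac12 k^2-P)U_\xi\circ f\, f_\xi = 2(\tilde U^2+\tfrac12 k^2-\tilde P)\tilde U_\xi$, with the initial condition reading $\tilde X(0) = X_0 \circ f$. Uniqueness closes this step.

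Once equivariance is established, well-definedness of $\tilde S_t$ is immediate: if $[X]=[X']$, then $X' = X \circ f$ for some $f\in\Gr$, so $S_t X' = S_t X \circ f$ and hence $[S_t X'] = [S_t X]$. The semigroup property $\tilde S_{t+s} = \tilde S_t \circ \tilde S_s$ passes directly from the semigroup property of $S_t$ on $\F$ given by Theorem \ref{th:global}.

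For continuity, I would use the identification of $\quot$ with $\F_0$ via $\tilde\Gamma$. Given a convergent sequence $[X_n] \to [X]$ in $\quot$, by definition $\Gamma(X_n) \to \Gamma(X)$ in $E$, and since $\tilde S_t([X_n]) = [S_t \Gamma(X_n)]$ (using equivariance), one may assume $X_n, X \in \F_0$. Then $\{\|X_n\|_E\}$ is bounded, so by Lemma \ref{lem:Fpres} applied with $\kappa = 0$ there is a single $\kappa'$ such that $S_t X_n, S_t X \in \F_{\kappa'}$ for all $n$ and all $t \in [0,T]$. The continuity estimate \eqref{est:time} of Theorem \ref{th:global} gives $S_t X_n \to S_t X$ in $E$, and Lemma \ref{lem:picont} (continuity of $\Gamma$ on $\F_{\kappa'}$) yields $\Gamma(S_t X_n) \to \Gamma(S_t X)$ in $E$, which is exactly $d_\quot(\tilde S_t[X_n], \tilde S_t[X]) \to 0$. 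The main obstacle is really just the change-of-variables computation for the nonlocal $P$, $Q$ (and checking the extra terms coming from the $\zeta$, $\bar U$, $\bar r$ decomposition); everything else follows routinely by combining uniqueness of ODE solutions with the earlier lemmas.
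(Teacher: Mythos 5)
Your proposal is correct and follows essentially the same route as the paper, which simply defers to the proof of the analogous result in \cite[Theorem 3.7]{HolRay:07}: equivariance of $P$ and $Q$ under relabeling via the change of variables $\eta\mapsto f(\eta)$, uniqueness of solutions of \eqref{eq:chsyseq} to conclude \eqref{eq:Hequivar}, and continuity on the quotient through the chain $\F_0\xrightarrow{S_t}\F_{\kappa'}\xrightarrow{\Gamma}\F_0$ using Lemma \ref{lem:Fpres}, the estimate \eqref{est:time}, and Lemma \ref{lem:picont}, exactly as encoded in the diagram \eqref{eq:diag}. The only addition specific to the two-component system is tracking the extra variables $\bar r$, $k$ through the relabeling, which you handle correctly since $\tilde{\bar r}=\bar r\circ f\,f_\xi$ and $k$, $c$ are unchanged.
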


\begin{proof}  
See \cite[Theorem 3.7]{HolRay:07}.
\end{proof}

We have the
following diagram:
\begin{equation}
\label{eq:diag}
\xymatrix{
\F_0\ar[r]^{\Pi}&\quot\\
\F_\alpha\ar[u]^{\Gamma}&\\
\F_0\ar[u]^{S_t}\ar[r]^\Pi&\quot\ar[uu]_{\tilde S_t}
}
\end{equation}

\subsection{Mappings between the two coordinate systems}

Our next task is to derive the correspondence
between Eulerian coordinates (functions in $\D$)
and Lagrangian coordinates (functions in
$\quot$). The set $\D$ however allows the energy
density to have a singular part and a positive
amount of energy can concentrate on a set of
Lebesgue measure zero.

We define the mapping $L$ from $\D$ to $\F_0$ which to
any initial data in $\D$ associates an initial
data for the equivalent system in $\F_0$.

\begin{theorem} 
\label{th:Ldef}
For any $(u,\rho,\mu)$ in $\D$, let
\begin{subequations}
\label{eq:Ldef}
\begin{align}
\label{eq:Ldef1}
y(\xi)&=\sup\left\{y\ |\ \mu((-\infty,y))+y<\xi\right\},\\
\label{eq:Ldef2}
h(\xi)&=1-y_\xi(\xi),\\
\label{eq:Ldef3}
U(\xi)&=u\circ{y(\xi)},\\
\label{eq:Ldef4}
r(\xi)&=\rho\circ{y(\xi)}y_\xi(\xi).
\end{align}
\end{subequations}
Then $(y,U,h,r)\in\F_0$. We denote by $L\colon \D\rightarrow \F$ the mapping which to any element $(u,\rho,\mu)\in\D$ associates $X=(y,U,h,r)\in \F$ given by \eqref{eq:Ldef}.  
\end{theorem}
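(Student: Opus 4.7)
The plan is to adapt the change-of-variables construction from \cite[Theorem 4.1]{HolRay:07}, upgrading it to account for the extra density variable $\rho$ with its constant asymptotic value $k$. First I would analyze $y$. The function $F(x) = \mu((-\infty,x)) + x$ is nondecreasing and left-continuous, and \eqref{eq:Ldef1} defines $y$ as its right-continuous pseudo-inverse. Because $\mu$ is a finite measure, $|y(\xi)-\xi| \leq \mu(\Real)$, so $y-\id$ is bounded, and $y$ is $1$-Lipschitz, giving $0 \leq y_\xi \leq 1$ a.e. Consequently $h = 1 - y_\xi \in [0,1]$, so $h \geq 0$ and $y_\xi + h = 1 > 0$; moreover $\int_\Real h\,d\xi = \mu(\Real) < \infty$, so $h \in L^1 \cap L^\infty \subset L^2$.

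Next I would verify $y + H = \id$, which places the image in $\F_0$ rather than merely in $\F$. Since $H_\xi = h = 1 - y_\xi$ a.e., we have $(y+H)_\xi = 1$ a.e., and $y$ is absolutely continuous (being Lipschitz) while $H$ is absolutely continuous (since $h \in L^1$). Using $H(-\infty) = 0$ and $y(\xi)-\xi \to 0$ as $\xi\to-\infty$ (both consequences of $\mu$ finite) fixes the constant of integration to zero. Then I would check the decomposition and function-space conditions for $\mathcal{G}$. Writing $u = \bar u + c\chi$ with $\bar u \in H^1(\Real)$ (possible because $u \in H_{0,\infty}$) and $\rho = \bar\rho + k$, the decompositions $U = \bar u \circ y + c\,\chi\circ y$ and $r = \bar\rho\circ y \cdot y_\xi + k y_\xi$ identify $\bar U = \bar u\circ y$ and $\bar r = \bar\rho\circ y \cdot y_\xi$, and the bounds $\|\bar U\|_{L^2}^2 \leq \|\bar u\|_{L^2}^2$, $\|\bar r\|_{L^2}^2 \leq \|\bar\rho\|_{L^2}^2$ follow from $y_\xi \leq 1$; membership of $\zeta = y - \id$ in $V$ follows from $\zeta \in L^\infty$ and $\zeta_\xi = -h \in L^2$. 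The $W^{1,\infty}$ and $L^\infty$ bounds needed for $\F_0 \subset \mathcal G$ are now immediate from the construction.

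The crux is the pointwise identity \eqref{eq:lagcoord3}, $y_\xi h = U_\xi^2 + \bar r^2$ a.e. I would split $\Real = A \cup A^c$ with $A = \{y_\xi > 0\}$. On $A^c$, $y_\xi = 0$ a.e., and since $A^c$ is a countable union of intervals on which $y$ is constant (the plateaux of $y$), both $U$ and $\bar r$ are constant on each component, so $U_\xi = 0$ and $\bar r = 0$ a.e.\ on $A^c$, and both sides vanish. On $A$, $y$ is locally bi-Lipschitz, and the chain rule yields $U_\xi = u_x(y)y_\xi$ and $\bar r = \bar\rho(y)y_\xi$ a.e., so $U_\xi^2 + \bar r^2 = (u_x^2 + \bar\rho^2)(y)\,y_\xi^2$; the identity then reduces to $h = (u_x^2 + \bar\rho^2)(y)\,y_\xi$ on $A$, which I would obtain by differentiating $H(\xi) = \mu((-\infty, y(\xi)))$ (valid because $F(y(\xi))=\xi$ on $A$) and invoking \eqref{eq:abspart} together with the fact that the singular part of $\mu$ is carried by the image of $A^c$, hence is invisible on $A$.

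I expect this last step to be the main obstacle, because of the measure-theoretic care required both to justify the chain rule on $A$ and to argue that $\mu_s$ contributes nothing on $A$. The rest is bookkeeping: once \eqref{eq:lagcoord3} and the decomposition identities are in place, all the defining properties of $\F_0$ are present and the theorem follows.
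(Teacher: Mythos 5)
Your overall route is the same as the paper's: the paper invokes \cite[Theorem 3.8]{HolRay:07} for everything concerning $y$, $U$, $h$, and only writes out the new ingredients, namely the identity $(u_x^2\circ y+\bar\rho^2\circ y+1)y_\xi=1$ a.e.\ on $\{y_\xi\neq0\}$ (your $h=(u_x^2+\bar\rho^2)\circ y\,y_\xi$ combined with $h=1-y_\xi$) and the bounds $\Vert\bar r\Vert_{L^2}\le\Vert h\Vert_{L^1}^{1/2}$ and $\bar r^2\le1$ coming from $\bar r^2=\bar\rho^2\circ y\,y_\xi^2\le\bar\rho^2\circ y\,y_\xi$. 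So the plan is right and the crux is correctly identified.

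Two of your intermediate claims are false as stated, though both are repairable. First, $\Vert\bar U\Vert_{L^2}\le\Vert\bar u\Vert_{L^2}$ does not follow from $y_\xi\le1$: since $\bar U=\bar u\circ y$ carries no factor of $y_\xi$, the change of variables gives $\int\bar u(y)^2y_\xi\,d\xi=\Vert\bar u\Vert_{L^2}^2$, and the inequality $y_\xi\le1$ goes the wrong way (a large atom of $\mu$ at a point where $\bar u\neq0$ produces a plateau of $y$ that inflates the norm). The correct bound is $\int\bar u(y)^2\,d\xi=\int\bar u(y)^2(y_\xi+h)\,d\xi\le\Vert\bar u\Vert_{L^2}^2+\Vert\bar u\Vert_{L^\infty}^2\mu(\Real)$. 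Second, your treatment of $A^c=\{y_\xi=0\}$ assumes it is a countable union of plateaux of $y$; this fails when $\mu$ has a singular continuous part, since $y$ can be strictly increasing and yet have $y_\xi=0$ on a set of positive measure (the preimage of the carrier of the singular continuous part), and likewise $y$ need not be locally bi-Lipschitz on $A=\{y_\xi>0\}$. The standard fix, and the one behind \cite[Theorem 3.8]{HolRay:07}, is the two-sided estimate $|U(\xi')-U(\xi)|^2\le|y(\xi')-y(\xi)|\,|H(\xi')-H(\xi)|$, obtained from Cauchy--Schwarz and $\mu([y(\xi),y(\xi')))\le H(\xi')-H(\xi)$; it yields $U\in W^{1,\infty}$ with $U_\xi^2\le y_\xi h$ pointwise a.e., hence $U_\xi=0$ a.e.\ on $\{y_\xi=0\}$ for free, and reduces \eqref{eq:lagcoord3} to the a.e.\ identity on $\{y_\xi>0\}$ that you derive. (For $\bar r$ nothing is needed on $A^c$, since $\bar r=\bar\rho\circ y\,y_\xi$ vanishes there by definition.)
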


\begin{proof}
  The proof follows the same lines as the one of
  \cite[Theorem 3.8]{HolRay:07}, which we do not
  repeat here.  We give some more details about
  the variable $\rho$ and its Lagrangian
  counterpart $r$, which are specific to the
  Camassa-Holm system. As in \cite[Theorem
  3.8]{HolRay:07}, we obtain that
  \begin{equation}
    \label{eq:fondideul}
    (u_x^2\circ y+\bar\rho^2\circ y+1)y_\xi=1
  \end{equation}
  for allmost every $\xi$ such that
  $y_\xi\neq0$. From \eqref{eq:fondideul}, it
  follows that $0\leq y_\xi\leq 1$ and
  $hy_\xi=(u_\xi\circ y)^2y_\xi^2+(\bar\rho\circ
  y)^2 y_\xi^2$, which implies
  \eqref{eq:lagcoord3}. It remains to prove that
  $\bar r\in L^2(\Real)\cap L^\infty(\Real)$. We
  have
  \begin{equation*}
    \int_\Real\bar r(\xi)^2\,d\xi=\int_\Real\bar\rho(y(\xi))^2y_\xi^2\,d\xi\leq\int_\Real\bar\rho(y(\xi))^2y_\xi\,d\xi=\int_\Real\bar\rho(x)^2\,dx<\infty
  \end{equation*}
  and
  \begin{equation*}
    \bar r^2=\bar\rho^2\circ
    y\,y_\xi^2\leq\bar\rho^2\circ
    y\,y_\xi\leq 1.
  \end{equation*}
\end{proof}

Reversely, to any element in $\F$ there
corresponds a unique element in $\D$ which is
given by the mapping $M$ defined below.

\begin{theorem}
\label{th:umudef} 
Given any element $X=(y,U,h,r)\in\F$. Then, the
measure $y_{\#}(\bar r(\xi)\,d\xi)$ is absolutely
continuous, and we define $(u,\rho,\mu)$ as follows
\begin{subequations}
\label{eq:umudef}
\begin{align}
\label{eq:umudef1}
&u(x)=U(\xi)\text{ for any }\xi\text{ such that  }  x=y(\xi),\\
\label{eq:umudef2}
&\mu=y_\#(h(\xi)\,d\xi),\\
\label{eq:umudef3}
&\bar \rho(x)\,dx=y_\#(\bar r(\xi)\,d\xi),\\
\label{eq:umudef4}
&\rho(x)=k+\bar\rho(x).
\end{align}
\end{subequations}
We have that $(u,\rho,\mu)$ belongs to $\D$. We
denote by $M\colon \F\rightarrow\D$ the mapping
which to any $X$ in $\F$ associates the element $(u, \rho,\mu)\in \D$ as given
by \eqref{eq:umudef}. In particular, the mapping $M$ is
invariant under relabeling.
\end{theorem}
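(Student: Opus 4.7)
The approach is to extend \cite[Thm.~3.9]{HolRay:07}: the parts of the argument involving $(y,U,h,\mu)$ transfer directly from the CH case, so I concentrate on what is new here, namely the absolute continuity of $y_\#(\bar r\,d\xi)$, the identification of $\bar\rho$, and the claim that $\muac$ picks up both $u_x^2$ and $\bar\rho^2$.

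First I would verify well-definedness of $u$ in \eqref{eq:umudef1}. Since $\zeta\in V$ and $y_\xi\ge 0$, the map $y=\zeta+\id$ is continuous and monotonic nondecreasing. If $y$ is constant on some interval $[\xi_1,\xi_2]$, then $y_\xi=0$ there, and \eqref{eq:lagcoord3} forces $U_\xi=0$ on $[\xi_1,\xi_2]$, so $U$ is constant and $u(x)=U(\xi)$ is unambiguous. Combined with the decomposition $U=\bar U+c\,\chi\circ y$ and the fact that $y+H\in G$, this yields $u\in H_{0,\infty}(\Real)$. For finiteness of $\mu$, integrating $h=U_\xi^2+\bar r^2-\zeta_\xi h$ and applying Cauchy--Schwarz gives $\|h\|_{L^1}\le\|U_\xi\|_{L^2}^2+\|\bar r\|_{L^2}^2+\|\zeta_\xi\|_{L^2}\|h\|_{L^2}<\infty$, so $\mu=y_\#(h\,d\xi)$ is a finite positive Radon measure.

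Next I treat $\nu:=y_\#(\bar r\,d\xi)$. From \eqref{eq:lagcoord3} we have $\bar r^2\le y_\xi h$ pointwise, whence $\bar r=0$ a.e.\ on $\{y_\xi=0\}$. Since $y$ is monotonic and Lipschitz, $\int_{y^{-1}(B)}y_\xi\,d\xi\le |B|$ for every Borel set $B$, and Cauchy--Schwarz gives
\begin{equation*}
|\nu(B)|\le\int_{y^{-1}(B)}|\bar r|\,d\xi\le\Big(\int_{y^{-1}(B)}y_\xi\,d\xi\Big)^{1/2}\|h\|_{L^1}^{1/2}\le |B|^{1/2}\|h\|_{L^1}^{1/2},
\end{equation*}
so $\nu\ll dx$; let $\bar\rho$ denote its Radon--Nikodym derivative. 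The same estimate tested against $\phi\in L^2(\Real)$ via $\int\phi\,\bar\rho\,dx=\int(\phi\circ y)\bar r\,d\xi$ yields $\|\bar\rho\|_{L^2}\le\|h\|_{L^1}^{1/2}$, hence $\rho=k+\bar\rho\in L^2_{\rm const}(\Real)$.

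To establish \eqref{eq:abspart}, I split $h=h_1+h_2$ with $h_1=h\,\mathbf{1}_{\{y_\xi>0\}}$, $h_2=h\,\mathbf{1}_{\{y_\xi=0\}}$. The push-forward $y_\#(h_2\,d\xi)$ lives on $y(\{y_\xi=0\})$, which has Lebesgue measure zero by the bound $|y(A)|\le\int_A y_\xi\,d\xi$, so this piece is singular. On $\{y_\xi>0\}$, the chain rule gives $u_x\circ y=U_\xi/y_\xi$ and $\bar\rho\circ y=\bar r/y_\xi$ a.e., hence $h_1=((u_x^2+\bar\rho^2)\circ y)\,y_\xi$, and changing variables delivers $y_\#(h_1\,d\xi)=(u_x^2+\bar\rho^2)\,dx$. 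The main technical obstacle is making these chain-rule identities rigorous on the potentially irregular set $\{y_\xi>0\}$; I would handle this by Lebesgue differentiation applied to the measures $y_\#(U_\xi\,d\xi)$ and $\nu$, combined with the Luzin N property of $y$. Finally, relabeling invariance is routine: for $f\in\Gr$, the substitution $\xi=f(\eta)$ gives $(y\circ f)_\#(h\circ f\,f_\xi\,d\eta)=y_\#(h\,d\xi)=\mu$, with analogous identities for $\bar r$ and for $u$ via composition with $(y\circ f)^{-1}$.
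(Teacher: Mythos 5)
Your proposal is correct and follows essentially the same route as the paper: the key step in both is the pointwise bound $\bar r^2\le y_\xi h$ from \eqref{eq:lagcoord3} combined with Cauchy--Schwarz to get absolute continuity of $y_\#(\bar r\,d\xi)$ together with $\norm{\bar\rho}_{L^2}\le\norm{h}_{L^1}^{1/2}$, with the remaining parts deferred to the scalar CH reference. The only cosmetic difference is that where you invoke Lebesgue differentiation and the Luzin N property to justify $\bar\rho\circ y\,y_\xi=\bar r$ on $\{y_\xi>0\}$, the paper obtains this identity more directly by noting that $y^{-1}(y(\tilde B))=\tilde B$ for subsets $\tilde B$ of the complement of the degenerate set, using monotonicity of $y$.
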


\begin{proof}
  Most of the proof follows closely \cite[Theorem
  3.11]{HolRay:07}. That the definition of $u$ is
  well-posed follows exactly as in \cite[Theorem
  3.11]{HolRay:07}. Next we prove that
  $y_{\#}(\bar r(\xi)\,d\xi)$ is absolutely
  continuous and satisfies \eqref{eq:umudef3} for
  $\bar \rho\in L^2(\Real)$. In addition, we will
  see that $\bar\rho\circ yy_\xi=\bar r$ whenever
  $y_\xi\not=0$. Let $\lambda=y_\#(\bar
  r(\xi)\,d\xi)$. For any continuous function with
  compact support $\phi$, we have, using the
  change of variables $x=y(\xi)$,
  \begin{equation*}
    \int_\Real \phi(x)d\lambda(x)=\int_{\Real}
    \bar r\phi\circ y \,d\xi=\int_{\{\xi\in\Real\
      |\ y_\xi(\xi)\neq 0\}} \frac{\bar
      r}{\sqrt{y_\xi}}\phi\circ y
    \sqrt{y_\xi}\,d\xi,
  \end{equation*}
  because $\bar r(\xi)=0$ for almost every $\xi$ such
  that $y_\xi=0$. Hence,
  \begin{align}
    \notag
    \int_\Real \phi(x)d\lambda(x)& \leq
    \Big(\int_{\{\xi\in\Real\
      |\ y_\xi(\xi)\neq 0\}} \frac{\bar r^2}{y_\xi}\,d\xi\Big)^{1/2}\Big(\int_\Real \phi^2 \dx\Big)^{1/2}\\
    \label{eq:L2bd}
    & \leq \Big(\int_\Real h
    \,d\xi\Big)^{1/2}\norm{\phi}_{L^2}\leq
    \norm{h}_{L^1}^{1/2}\norm{\phi}_{L^2},
  \end{align}
  where we used that $\bar r^2\leq h y_\xi$, see
  \eqref{eq:lagcoord3}. It follows that $\lambda$
  is absolutely continuous, we write
  $\lambda=\bar\rho dx$, and \eqref{eq:L2bd} also
  implies that $\bar\rho\in L^2(\Real)$ with
  $\norm{\bar\rho}_{L^2}\leq\norm{h}_{L^1}^{\frac{1}2}$.
  By the definition of $\bar\rho$, we have that,
  for any set $B$,
  \begin{equation}
    \label{eq:barrhor}
    \int_{B}\bar\rho(x)\,dx=\int_{y^{-1}(B)}\bar\rho\circ yy_\xi\,d\xi=\int_{y^{-1}(B)}\bar r\,d\xi
  \end{equation}
  Define
  \begin{align*}
    Z=\{ \xi\in\Real&\mid  y\text{ is differentiable at }\xi \text{ and }y_\xi(\xi)=0\\
    &\quad \text{ or }y \text{ is not
      differentiable at }\xi\},
  \end{align*}
  For any $\tilde B\subset Z^c$, we have
  $y^{-1}(y(\tilde B))=\tilde B$. Indeed, assume
  the opposite. Then, there exists $\xi_0\in\tilde
  B$ and $\xi_1\in\tilde B^c$ such that
  $y(\xi_0)=y(\xi_1)$. Since $y$ is increasing, it
  implies that $y_\xi(\xi_0)=0$, which contradicts
  the fact that $\xi_0\in Z^c$. Thus,
  \eqref{eq:barrhor} gives
  \begin{equation*}
    \int_{\tilde B}\bar\rho\circ yy_\xi\,d\xi=\int_{\tilde B}\bar r\,d\xi,
  \end{equation*}
  and it follows that 
  \begin{equation}
    \label{eq:identrhor}
    \bar\rho\circ yy_\xi=\bar r
  \end{equation}
  on $Z^c$. Finally, it is left to show that
  $\mu_{ac}=u_x^2+\bar\rho^2$. We can use
  \eqref{eq:identrhor} and the proof follows the
  same line as in \cite[Theorem 3.11]{HolRay:07}.
\end{proof}

Finally, in order to show that the equivalence
classes in Lagrangian coordinates are in bijection
with the set of Eulerian coordinates, it is left
to prove the following theorem.

\begin{theorem}\label{th:LMinv}The mappings $M$ and
  $L$ are invertible. We have
\begin{equation*}
L\circ M=\id_\quot\text{ and }M\circ L=\id_\D.
\end{equation*}
\end{theorem}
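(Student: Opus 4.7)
The plan is to treat the two identities separately, leveraging the companion result in \cite[Theorem 3.12]{HolRay:07} for the components $(y,U,h)$ and adding the verifications for the Lagrangian density $r$, which is the new piece in the 2CH setting.

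For $M\circ L=\id_\D$, I would start from $(u,\rho,\mu)\in\D$, form $X=L(u,\rho,\mu)=(y,U,h,r)\in\F_0$ via \eqref{eq:Ldef}, and then compute $(\tilde u,\tilde\rho,\tilde\mu)=M(X)$. The equality $\tilde u=u$ follows immediately from $U=u\circ y$ together with \eqref{eq:umudef1}, while $\tilde\mu=\mu$ is obtained by combining $h=1-y_\xi$ with the identity $\mu((-\infty,y(\xi)))+y(\xi)=\xi$ that characterizes the generalized inverse defined by \eqref{eq:Ldef1}; pushing $h\,d\xi$ forward by $y$ then reproduces $\mu$. The only genuinely new verification is $\tilde\rho=\rho$: by construction $r=\rho\circ y\,y_\xi$, so for any continuous test function $\phi$ the change of variables formula gives
\begin{equation*}
\int_\Real \phi\,d\big(y_\#(\bar r\,d\xi)\big)=\int_\Real \bar r\,(\phi\circ y)\,d\xi=\int_\Real (\bar\rho\circ y)\,y_\xi\,(\phi\circ y)\,d\xi=\int_\Real \bar\rho\,\phi\,dx,
\end{equation*}
so that $y_\#(\bar r\,d\xi)=\bar\rho\,dx$, and combined with the invariance of the asymptotic constant $k$ this recovers $\rho$.

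For $L\circ M=\id_\quot$ I would use the relabeling invariance of $M$ from Theorem~\ref{th:umudef} and show that, given $X=(y,U,h,r)\in\F$, the element $L(M(X))$ coincides with $\Gamma(X)=X\circ(y+H)^{-1}\in\F_0$; since $\Gamma(X)$ and $X$ lie in the same equivalence class, this yields $[L(M(X))]=[X]$ in $\quot$. Setting $f=(y+H)^{-1}$, the task is to verify that the four components of $X\circ f$ satisfy the defining formulas \eqref{eq:Ldef1}--\eqref{eq:Ldef4} associated to $(u,\rho,\mu)=M(X)$. The identity $(y+H)\circ f=\id$ supplies \eqref{eq:Ldef1} and \eqref{eq:Ldef2} simultaneously; the identity for $U\circ f$ follows because $u=U\circ y^{-1}$ on the image of $y$ and $U$ is constant on the plateaus of $y$ (a standard consequence of \eqref{eq:lagcoord3}); and the identity for $(r\circ f)f_\xi$ reduces, via the chain rule and the pointwise relation \eqref{eq:identrhor}, to the same change-of-variables computation used in the previous paragraph.

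The main obstacle is the careful bookkeeping at points where $y_\xi=0$, since there $r$ must vanish and the pushforward identifications require the decomposition into the sets $Z$ and $Z^c$ introduced in the proof of Theorem~\ref{th:umudef}: on $Z^c$ one has $\bar\rho\circ y\,y_\xi=\bar r$ pointwise by \eqref{eq:identrhor}, while on $Z$ both sides vanish almost everywhere thanks to \eqref{eq:lagcoord3}, which forces $\bar r=0$ wherever $y_\xi=0$. Once these identifications are in place, the remainder of the argument is a direct transcription of the scalar Camassa--Holm proof in \cite[Theorem 3.12]{HolRay:07} with the additional $r$-component tracked through the same relabeling, so I expect the full proof to be short and mainly a reference to \cite{HolRay:07} with the two $\rho$/$r$ verifications above made explicit.
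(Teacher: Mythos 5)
Your overall strategy coincides with the paper's: both compositions are reduced to the scalar argument of \cite{HolRay:07} for the components $(y,U,h)$, and the new density component is handled exactly as in the paper, via the pushforward computation $y_\#(\bar r\,d\xi)=\bar\rho\,dx$ together with the pointwise identity \eqref{eq:identrhor}, with the plateau set $\{y_\xi=0\}$ controlled by \eqref{eq:lagcoord3}. That part of the plan is sound.

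There is, however, one step that fails as written. In the verification of $M\circ L=\id_\D$ you invoke the identity $\mu((-\infty,y(\xi)))+y(\xi)=\xi$ as the characterization of the generalized inverse \eqref{eq:Ldef1}. This identity is false whenever $\mu$ has an atom, and atoms are precisely the case the space $\D$ is built to accommodate (concentration of energy). For instance, for $(0,0,\delta_0)\in\D$ one has $y(\xi)=0$ for all $\xi\in[0,1]$, so $\mu((-\infty,y(\xi)))+y(\xi)=0\neq\xi$ on $(0,1]$. What is actually true, and what the paper proves, is only the one-sided bound $\mu((-\infty,y(\xi)))+y(\xi)\leq\xi$, obtained from the lower semicontinuity of $x\mapsto\mu((-\infty,x))$ along an approximating sequence $x_i\uparrow y(\xi)$; equality holds exactly at $\xi=g(x):=\sup\{\xi\mid y(\xi)<x\}$, and combining the two inequalities gives $\mu((-\infty,x))+x=g(x)$. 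It is this identity in $x$, not the pointwise identity in $\xi$, that allows one to compare $\mu$ with $\tilde\mu=y_\#(h\,d\xi)$ and conclude $\tilde\mu=\mu$. The same care is needed in the other direction: the claim that $(y+H)\circ f=\id$ ``supplies \eqref{eq:Ldef1}'' hides the two-sided sup/inf argument needed to show $\tilde y=y$, since $y$ is not injective on its plateaus. You correctly anticipated this bookkeeping for $\bar r$ via the set $Z$, but the identical issue for $\mu$ and for $y$ itself is where the real work of the proof lies, and your proposal would need to be repaired there along the lines just described.
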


\begin{proof}
Given $[X]$ in $\quot$, we choose
$X=(y,U,h,r)=\tilde \Gamma([X])$ as a representative of
$[X]$ and consider $(u,\rho,\mu)$ given by
\eqref{eq:umudef} for this particular $X$. Note
that, from the definition of $\tilde \Gamma$, we have
$X\in\F_0$. Let $\tilde X=(\tilde y,\tilde U,\tilde h,\tilde r)$ be
the representative of $L(u,\rho,\mu)$ in $\F_0$ given
by the formulas \eqref{eq:Ldef}. We claim that
$(\tilde y,\tilde U,\tilde h,\tilde r)=(y,U,h,r)$ and therefore
$L\circ M=\id_\quot$. Let
\begin{equation}
\label{eq:gdef}
g(x)=\sup\{\xi\in\Real\ |\ y(\xi)<x\}.
\end{equation}
It is not hard to prove, using the fact that $y$
is increasing and continuous, that 
\begin{equation}
\label{eq:ygxeqx}
y(g(x))=x
\end{equation}
and $y\inv((-\infty,x))=(-\infty,g(x))$. For any
$x\in\Real$, we have, by \eqref{eq:umudef2}, that
\begin{equation*}
\mu((-\infty,x))=\int_{y\inv((-\infty,x))}h\,d\xi
=\int_{-\infty}^{g(x)}h\,d\xi
=H(g(x))
\end{equation*}
because $H(-\infty)=0$. Since $X\in\F_0$,
$y+H=\id$ and we get
\begin{equation}
\label{eq:mupxeqg}
\mu((-\infty,x))+x=g(x).
\end{equation}
From the definition of $\tilde y$, we then obtain
that
\begin{equation}
\label{eq:ybardef}
\tilde y(\xi)=\sup\{x\in\Real\ |\ g(x)<\xi\}.
\end{equation}
For any given $\xi\in\Real$, let us consider an
increasing sequence $x_i$ tending to $\tilde
y(\xi)$ such that $g(x_i)<\xi$; such sequence
exists by \eqref{eq:ybardef}. Since $y$ is
increasing and using \eqref{eq:ygxeqx}, it follows
that $x_i\leq y(\xi)$. Letting $i$ tend to
$\infty$, we obtain $\tilde y(\xi)\leq
y(\xi)$. Assume that $\tilde y(\xi)<y(\xi)$. Then,
there exists $x$ such that $\tilde
y(\xi)<x<y(\xi)$ and equation \eqref{eq:ybardef}
then implies that $g(x)\geq\xi$. On the other
hand, $x=y(g(x))<y(\xi)$ implies $g(x)<\xi$
because $y$ is increasing, which gives us a
contradiction. Hence, we have $\tilde y=y$. It
follows directly from the definitions, since
$y_\xi+h=1$, that $\tilde h=h$, $\tilde U=U$,
$\vert \bar{ \tilde {r}}\vert =\vert \bar r\vert$,
and $\tilde d= d$. If $y_\xi(\xi)=0$, then $\bar
{\tilde {r}}(\xi)=0=\bar r(\xi)$. We use
\eqref{eq:identrhor} and get $\bar {\tilde
  {r}}(\xi)=\bar\rho(\tilde y(\xi))\tilde
y_\xi=\bar\rho( y(\xi)) y_\xi=\bar r$ if $y_\xi(\xi)\not=0$. Thus we
have proved that $L\circ M=\id_\quot$.

\medskip
We now turn to the proof that $M\circ L=\id_\D$. 
Given $(u,\rho,\mu)$ in $\D$, we denote by $(y,U,h,r)$
the representative of $L(u,\rho,\mu)$ in $\F_0$ given
by \eqref{eq:Ldef}. Then, let $(\tilde
u,\tilde \rho,\tilde\mu)=M\circ L(u,\rho,\mu)$.  We claim that
$(\tilde u,\tilde\rho,\tilde \mu)=(u,\rho,\mu)$. Let $g$ be the
function defined as before by \eqref{eq:gdef}. The
same computation that leads to \eqref{eq:mupxeqg}
now gives
\begin{equation}
\label{eq:mubarg}
\tilde \mu((-\infty,x))+x=g(x).
\end{equation}
Given $\xi\in\Real$, we consider an increasing
sequence $x_i$ which converges to $y(\xi)$ and
such that $\mu((-\infty,x_i))+x_i<\xi$. The
existence of such a sequence is guaranteed by
\eqref{eq:Ldef1}. Passing to the limit and since
$F(x)=\mu((-\infty,x))$ is lower semi-continuous,
we obtain
$\mu((-\infty,y(\xi)))+y(\xi)\leq\xi$. We take
$\xi=g(x)$ and get
\begin{equation}
\label{eq:musg}
\mu((-\infty,x))+x\leq g(x).
\end{equation}
From the definition of $g$, there exists an
increasing sequence $\xi_i$ which converges to
$g(x)$ such that $y(\xi_i)<x$. The definition
\eqref{eq:Ldef1} of $y$ tells us that
$\mu((-\infty,x))+x\geq\xi_i$. Letting $i$ tend to
infinity, we obtain $\mu((-\infty,x))+x\geq g(x)$
which, together with \eqref{eq:musg}, yields
\begin{equation}
\label{eq:mueqg}
\mu((-\infty,x))+x=g(x).
\end{equation}
Comparing \eqref{eq:mueqg} and \eqref{eq:mubarg}
we get that $\mu=\tilde\mu$.  It is clear from the
definitions that $\tilde u=u$. Using
\eqref{eq:identrhor}, we get
$\tilde\rho(y)y_\xi=r=\rho(y)y_\xi$ and therefore
$\tilde\rho=\rho$. Hence, $(\tilde
u,\tilde\rho,\tilde\mu)=(u,\rho,\mu)$ and $M\circ
L=\id_{\D}$.
\end{proof}

\section{Continuous semigroup of solutions}

We define $T_t$ as
\begin{equation*}
  T_t=M\circ S_t\circ L.
\end{equation*}

The metric $d_\D$ is defined as 
\begin{equation*}
  d_D((u_1,\rho_1,\mu_1),(u_2,\rho_2,\mu_2))=d_{\F_0}(L(u_1,\rho_1,\mu_1),L(u_2,\rho_2,\mu_2)).
\end{equation*}

\begin{definition}
  \label{eq:defweakconssol}
  Assume that $u\colon[0,\infty)\times\Real \to \Real$  and $\rho\colon [0,\infty) \times\Real \to\Real$ satisfy \\
  (i) $u\in L^\infty_{\rm{loc}}([0,\infty), H_\infty(\Real))$ and $\rho\in L^\infty_{\rm{loc}}([0,\infty), L^2_{\rm const}(\Real))$, \\
  (ii) the equations
  \begin{multline}\label{eq:weak1}
    \iint_{[0,\infty)\times\Real}\Big[
    -u(t,x)\phi_t(t,x)
    +\big(u(t,x)u_x(t,x)+P_x(t,x)\big)\phi(t,x)\Big]dxdt\\
    =\int_\Real u(0,x)\phi(0,x)dx,
  \end{multline}
  \begin{equation}\label{eq:weak2}
    \iint_{[0,\infty)\times\Real} \Big[(P(t,x)-u^2(t,x)-\frac{1}{2}u_x^2(t,x)-\frac{1}{2}\rho^2(t,x))\phi(t,x)+P_x(t,x)\phi_x(t,x)\Big]dxdt=0,
  \end{equation}
  and
  \begin{equation}
   \label{eq:weak3}
   \iint_{[0,\infty)\times\Real}\Big[ -\rho(t,x)\phi_t(t,x)-u(t,x)\rho(t,x)\phi_x(t,x)\Big]dxdt=\int_\Real \rho(0,x)\phi(0,x)dx,
  \end{equation}
  hold for all $\phi\in
  C^\infty_0([0,\infty)\times\Real)$. Then we say that
  $u$ is a weak global solution of the two-component
  Camassa--Holm system. If $u$ in addition satisfies
  \begin{equation*}
    (u^2+u_x^2+\rho^2)_t+(u(u^2+u_x^2+\rho^2))_x-(u^3-2Pu)_x=0
  \end{equation*}
  in the sense that
  \begin{align}
    \iint_{(0,\infty)\times\Real}&\Big[
    (u^2(t,x)+u_x^2(t,x)+\rho^2(t,x))\phi_t(t,x)\notag\\
    &+(u(t,x)(u^2(t,x)+u_x^2(t,x)+\rho^2(t,x)))\phi_x(t,x)\label{eq:weak4}\\
    &\qquad\qquad-(u^3(t,x)-2P(t,x)u(t,x))\phi_x(t,x)\Big]dxdt \notag
    =0,
  \end{align}
  for any $\phi\in
  C_0^{\infty}((0,\infty)\times\Real)$, we say that $u$
  is a weak global conservative solution of the two-component
  Camassa--Holm system.
\end{definition}

\begin{theorem} \label{th:mainX}
  The mapping $T_t$ is a continuous
  semigroup of solutions with respect to the metric
  $d_\D$.  Given any initial data
  $(u_0,\rho_0,\mu_0)\in\D$, let
  $(u(t,\dott),\rho(t,\dott)),\mu(t,\dott)))=T_t(u_0,\rho_0,\mu_0)$. Then
  $(u,\rho)$ is a weak solution to
  \eqref{eq:chsys}, and $\mu$ is a weak solution to
  \begin{equation*}
    (u^2+\mu+\rho^2-\bar \rho^2)_t+(u(u^2+\mu+\rho^2-\bar \rho^2))_x=(u^3-2Pu)_x.
  \end{equation*}
  For almost every time, $\mu$ is absolutely
  continuous, and in that case
  $\mu=(u_x^2+\bar \rho^2)\,dx$.
\end{theorem}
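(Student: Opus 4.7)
The plan is to prove the theorem in four steps: (i) the semigroup and continuity properties of $T_t$, (ii) the weak equations \eqref{eq:weak1}--\eqref{eq:weak3}, (iii) the conservation law for $\mu$, and (iv) absolute continuity of $\mu$ for almost every time. For (i), by Theorem~\ref{th:LMinv} and the definition of $d_\D$, the map $L\colon(\D,d_\D)\to(\F_0,\norm{\dott}_E)$ is an isometric bijection with inverse $M|_{\F_0}$. The identity $T_{t+s}=T_t\circ T_s$ reduces to observing that $L\circ M\circ S_s\circ L=\Gamma\circ S_s\circ L$ lies in the same equivalence class as $S_s\circ L$, so that after applying $M\circ S_t$, which is relabeling-invariant by the $\Gr$-equivariance of $S_t$ (Theorem~\ref{th:sgS}) together with Theorem~\ref{th:umudef}, one recovers $M\circ S_{t+s}\circ L$. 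Continuity follows because $S_t$ is Lipschitz on bounded sets (Theorem~\ref{th:global}), $S_t$ sends bounded subsets of $\F_0$ into some $\F_{\kappa'}$ (Lemma~\ref{lem:Fpres}), $\Gamma$ is continuous on $\F_{\kappa'}$ (Lemma~\ref{lem:picont}), and $L$ and $M|_{\F_0}$ are isometric.

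For (ii), work with the canonical representative $X(t)=(y,U,h,r)\in\F_0$ and perform the change of variables $x=y(t,\xi)$, which is legitimate since $y+H=\id$ gives a homeomorphism and $y_\xi>0$ a.e.\ for a.e.\ $t$ by Lemma~\ref{lem:3.7}(ii). For $\phi\in C_c^\infty([0,\infty)\times\Real)$, set $\Phi(t,\xi)=\phi(t,y(t,\xi))$; it has compact $\xi$-support uniformly in $t$ because $y(t,\xi)-\xi$ is bounded. Using $P_x\circ y=P_\xi/y_\xi=Q$ from Lemma~\ref{lem:PQ} and substituting from \eqref{eq:chsyseq1}--\eqref{eq:chsyseq2}, the transformed integrand of \eqref{eq:weak1} simplifies to $-(Uy_\xi\Phi)_t+(U^2\Phi)_\xi$, which integrates to $\int u(0,x)\phi(0,x)\,dx$. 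The analogous calculation for \eqref{eq:weak3}, using $r_t=0$ from \eqref{eq:chsyseq60}, produces $\int\rho(0,x)\phi(0,x)\,dx$. Equation \eqref{eq:weak2} is time-independent and is simply the weak Helmholtz equation satisfied by $P$ by construction, see \eqref{rep:p}.

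Step (iii) parallels (ii): decompose $\rho^2-\bar\rho^2=k^2+2k\bar\rho$ to isolate the contributions of the asymptotics, write $\mu=y_\#(h\,d\xi)$, and use $h_t=2(U^2+\tfrac12 k^2-P)U_\xi$ together with $\bar r_t=-kU_\xi$ and the same integration-by-parts technique to collapse the weak form to zero. Step (iv) follows from Lemma~\ref{lem:3.7}(ii): for a.e.\ $t$ one has $y_\xi(t,\cdot)>0$ a.e., and $h=0$ where $y_\xi=0$ by \eqref{eq:lagcoord3}, so $\mu(A)=\int_{y^{-1}(A)}h\,d\xi=0$ for any Lebesgue-null $A\subset\Real$. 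The density is then $h/y_\xi=(U_\xi/y_\xi)^2+(\bar r/y_\xi)^2=u_x^2+\bar\rho^2$, using the identity $\bar\rho\circ y\,y_\xi=\bar r$ established in the proof of Theorem~\ref{th:umudef}.

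The main technical obstacle is the book-keeping in steps (ii)--(iii): the nonvanishing asymptotics generate contributions through $\bar U_{\xi,t}$ and $\bar r_t$ involving the cutoff $\chi$ and its derivatives (see \eqref{eq:govsysder2}, \eqref{eq:govsysder6}) and through the kernel representations \eqref{eq:P}--\eqref{eq:Q}. All such terms must cancel as forced by Lemma~\ref{lem:PQ}, but the term-by-term verification is lengthy rather than conceptually difficult.
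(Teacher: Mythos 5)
Your proposal is correct and follows essentially the same route as the paper, which itself only sketches the argument by referring to \cite[Theorem 5.2]{GHR:12}: transfer everything to the canonical representative in $\F_0$, use the relabeling invariance of $M$ and the $\Gr$-equivariance of $S_t$ for the semigroup property, and verify the weak formulations by the change of variables $x=y(t,\xi)$ together with the Lagrangian evolution equations. One small correction in step (iv): the claim that ``$h=0$ where $y_\xi=0$'' is false --- \eqref{eq:lagcoord3} forces $U_\xi=\bar r=0$ there, while \eqref{eq:lagcoord2} forces $h>0$ there (this is exactly where energy concentrates); the conclusion $\mu(A)=0$ for Lebesgue-null $A$ still holds, but via the observation that $\int_{y^{-1}(A)}y_\xi\,d\xi=\abs{A}=0$ forces $y^{-1}(A)\subset\{y_\xi=0\}$ up to a null set, and for almost every $t$ the set $\{y_\xi=0\}$ is itself null by Lemma~\ref{lem:3.7}(ii).
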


\begin{proof}
The proof follows closely the one for the Camassa--Holm equation with nonvanishing asymptotics, and we therefore refer to \cite[Theorem 5.2]{GHR:12}.
 
\end{proof}

\section{Regularity and uniqueness results}

Given $(u,\rho,\mu)\in\D$, $p\in\mathbb{N}$ and an
open set $I$, we say that $(u,\rho,\mu)$ is
$p$-regular on an open set $I$ if
\begin{equation*}
  u\in W^{p,\infty}(I),\ \rho\in W^{p-1,\infty}(I)\ \text{ and } \muac=\mu\text{ on }I.
\end{equation*}
By notation, we set
$W^{0,\infty}(I)=L^\infty(I)$. The variable $\rho$
has a regularizing effect, as the following
theorem shows. Even if the 2CH system has an
infinite speed of propagation, see
\cite{henry:09}, we obtain the hyperbolic feature
that discontinuities travel at finite speed. The
regularity is preserved in intervals defined by the
characteristics. 
\begin{theorem}
  \label{th:presreg}
  We consider the initial data
  $(u_0,\rho_0,\mu_0)$. Assume that
  $(u_0,\rho_0,\mu_0)$ is p-regular on a given
  interval $(x_0,x_1)$ and
  \begin{equation}
    \label{eq:rhopos}
    \rho_0(x)^2\geq c>0
  \end{equation}
  for $x\in (x_0,x_1)$. Then, for any
  $t\in\Real_+$, $(u,\rho,\mu)(t,\cdot)$ is
  p-regular on the interval $(y(t,\xi_0),
  y(t,\xi_1))$, where $\xi_0$ and $\xi_1$ satisfy
  $y(0,\xi_0)=x_0$ and $y(0,\xi_1)=x_1$ and are
  defined as
  \begin{equation*}
    \xi_0=\sup\{\xi\in\Real\ |\ y(0,\xi)\leq x_0\} \text{ and }
    \xi_1=\inf\{\xi\in\Real\ |\ y(0,\xi)\geq x_1\}.
  \end{equation*}
\end{theorem}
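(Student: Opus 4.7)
The plan is to transfer the problem to Lagrangian coordinates via $L$, propagate regularity at the level of the pointwise ODE system \eqref{eq:sysabc}, and then transfer back via $M$. Set $X_0 = L(u_0,\rho_0,\mu_0) \in \F_0$, $X(t) = S_t X_0 = (y,U,h,r)(t)$, and $(u,\rho,\mu)(t) = M(X(t))$. By Theorem \ref{th:LMinv} it suffices to work with $X(t)$ on $(\xi_0,\xi_1)$, the Lagrangian interval corresponding to $(x_0,x_1)$ at $t=0$.

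\emph{Base estimates on $(\xi_0,\xi_1)$.} Since $\muac=\mu$ on $(x_0,x_1)$ and $u_0\in W^{1,\infty}((x_0,x_1))$, $\bar\rho_0\in L^\infty((x_0,x_1))$, the identity \eqref{eq:fondideul} gives $y_{0,\xi}\in [c_0,1]$ a.e.\ on $(\xi_0,\xi_1)$ for some $c_0>0$. Because $r_t=0$, the quantity $r(t,\xi)=r_0(\xi)=\rho_0(y_0(\xi))y_{0,\xi}(\xi)$ is conserved, and the hypothesis \eqref{eq:rhopos} yields $r_0(\xi)^2\geq c\, c_0^2 > 0$ on $(\xi_0,\xi_1)$. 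The pointwise ODE \eqref{eq:sysabc} is linear in $(\zeta_\xi,U_\xi,h,\bar r)$ with coefficients involving only the globally bounded functions $U$, $P-\tfrac12 k^2-U^2$, and $k$ (Theorem \ref{th:global} and Lemma \ref{lem:PQ}), so Gronwall gives
\begin{equation*}
\bigl|(\zeta_\xi,U_\xi,h,\bar r)(t,\xi)\bigr| \leq C_T\bigl(\bigl|(\zeta_{0,\xi},U_{0,\xi},h_0,\bar r_0)(\xi)\bigr|+1\bigr)
\end{equation*}
for every $\xi\in(\xi_0,\xi_1)$ and $t\in[0,T]$, which is finite by the $p$-regularity assumption. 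For the lower bound on $y_\xi$, use $\bar r = r - k y_\xi$ together with \eqref{eq:lagcoord3}: whenever $y_\xi \leq |r_0|/(2|k|+1)$, one has $|\bar r|\geq |r_0|/2$, hence $y_\xi h \geq |r_0|^2/4$, which combined with the upper bound on $h$ forces $y_\xi(t,\xi)\geq c_1(T)>0$ on $(\xi_0,\xi_1)$.

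\emph{Higher regularity by induction on $j\leq p-1$.} Differentiating \eqref{eq:sysabc} $j$ times in $\xi$, the vector $(\partial_\xi^j\zeta_\xi,\partial_\xi^j U_\xi,\partial_\xi^j h,\partial_\xi^j \bar r)$ satisfies a linear ODE whose coefficients and inhomogeneous terms depend only on $U$, $k$, $P - \tfrac12 k^2 - U^2$, and lower-order $\xi$-derivatives already controlled in $L^\infty((\xi_0,\xi_1))$. The higher $\xi$-derivatives of $P$ along the Lagrangian flow are handled using \eqref{eq:Pder}--\eqref{eq:Qder}: $P_\xi = Q(1+\zeta_\xi)$ and $Q_\xi = -\tfrac12 h - (U^2+\tfrac12 k^2 - P)y_\xi - k\bar r$, and further derivatives reduce to $\xi$-derivatives of the already-bounded variables. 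The initial bounds at $t=0$ on $(\xi_0,\xi_1)$ come from the $p$-regularity of $(u_0,\rho_0,\mu_0)$ on $(x_0,x_1)$, translated via the chain rule on the $C^{p-1}$ diffeomorphism $y_0\colon (\xi_0,\xi_1)\to (x_0,x_1)$ whose derivatives are expressed through $y_{0,\xi}=1/(1+u_{0,x}^2\circ y_0+\bar\rho_0^2\circ y_0)$. Another application of Gronwall gives uniform $L^\infty$-bounds on $(\xi_0,\xi_1)\times[0,T]$ for all $\xi$-derivatives up to order $p-1$ of $(y_\xi,U_\xi,h,\bar r)$.

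\emph{Transfer back and conclusion.} With $y_\xi(t,\cdot)\in[c_1(T),C_T]$ on $(\xi_0,\xi_1)$ and $\xi$-derivatives of $y$ up to order $p$ bounded, the map $\xi\mapsto y(t,\xi)$ is a $W^{p,\infty}$-diffeomorphism from $(\xi_0,\xi_1)$ onto $(y(t,\xi_0),y(t,\xi_1))$. Then $u(t,\cdot)=U(t)\circ y(t)^{-1}\in W^{p,\infty}$, and $\bar\rho(t,\cdot)=(\bar r/y_\xi)(t)\circ y(t)^{-1}\in W^{p-1,\infty}$ (so $\rho = k+\bar\rho\in W^{p-1,\infty}$) on the image interval. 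Moreover $\mu(t)=y_\#(h\,d\xi)$ restricted to $(y(t,\xi_0),y(t,\xi_1))$ pulls back to $(h/y_\xi)\circ y^{-1}\,dx=(u_x^2+\bar\rho^2)\,dx$ via the change of variables, so $\muac=\mu$ on the image interval. The main technical obstacle is the inductive bookkeeping in Step 2, in particular expressing the $\xi$-derivatives of $P$ and $Q$ along the flow in terms of quantities already controlled and verifying that no unbounded contribution arises outside $(\xi_0,\xi_1)$ — this is precisely why \eqref{eq:Pder}--\eqref{eq:Qder} are crucial, as they convert $\xi$-differentiation of the nonlocal quantities $P,Q$ into \emph{local} algebraic expressions in $(y_\xi,U_\xi,h,\bar r)$.
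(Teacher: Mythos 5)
Your proposal is correct and follows essentially the same route as the paper: pass to Lagrangian variables, use $r_t=0$ together with \eqref{eq:lagcoord3} to get a uniform lower bound on $y_\xi$ over $(\xi_0,\xi_1)$, exploit the quasilinear/local structure of $\partial_\xi^j P$ and $\partial_\xi^j Q$ via \eqref{eq:Pder}--\eqref{eq:Qder} together with Gronwall and induction, and transfer back through the now-bi-Lipschitz (indeed $W^{p,\infty}$) map $y(t,\cdot)$. The only cosmetic difference is your dichotomy argument for the lower bound on $y_\xi$ in place of the paper's direct chain of inequalities $c_1^2\le r^2=(\bar r+ky_\xi)^2\le y_\xi(h+2k\bar r+k^2y_\xi)$; both yield the same conclusion.
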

\begin{proof}
  We consider first the case $p=1$. Let
  $(y_0,U_0,h_0,r_0)=L(u_0,\mu_0,\rho_0)$. Since
  $y_0$ is surjective and continuous, we have
  $y_0(\xi_0)=x_0$ and $y_0(\xi_1)=x_1$. We denote
  $I=(x_0,x_1)$ and $J=(\xi_0,\xi_1)$. Since $\mu$
  is absolutely continuous on $I$, we get from
  \eqref{eq:Ldef1} that, for any $\xi\in J$,
  \begin{equation*}
    \mu((-\infty,y_0(\xi_0)])+\muac((y_0(\xi_0),y_0(\xi)))+y_0(\xi)=\xi
  \end{equation*}
  so that
  \begin{equation*}
    \mu((-\infty,y_0(\xi_0)])+\int_{y_0(\xi_0)}^{y_0(\xi)}(u_{0x}^2+\bar\rho_0^2)\,dx+y_0(\xi)=\xi.
  \end{equation*}
  We differentiate this relation with respect to
  $\xi$ and obtain that
  \begin{equation}
    \label{eq:1pu0}
    y_{0\xi}(\xi)=\frac{1}{(1+u_{0x}^2+\bar\rho_0^2)\circ y_0(\xi)}.
  \end{equation}
  Hence,
  \begin{equation}
    \label{eq:lowbdy0xi}
    y_{0\xi}(\xi)\geq \frac{1}{1+C}
  \end{equation}
  for any $\xi\in J$ and for a constant $C$ which
  depends only on $\norm{u_0}_{W^{1,\infty}}(I)$
  and $\norm{\rho_0}_{L^\infty(I)}$, both
  quantities being bounded when
  $(u_0,\rho_0,\mu_0)$ is $p$-regular in $I$. By
  the definition of $r_0$, we have
  $\rho_0(y_0(\xi))y_{0\xi}(\xi)=r_0(\xi)$ and,
  therefore, it follows from \eqref{eq:lowbdy0xi}
  and assumption \eqref{eq:rhopos} that, for any
  $\xi\in J$, $r_0(\xi)\geq c_1$ for some constant
  $c_1>0$. By \eqref{eq:govsysder5} and
  \eqref{eq:lagcoord3}, we have
  \begin{equation}
    \label{eq:bdbelr0}
    c_1^2\leq r_0^2(\xi)=r^2(t,\xi)=(\bar r+ky_\xi)^2=\bar r^2+y_\xi(2\bar r+y_\xi)\leq y_\xi(h+2\bar r+y_\xi)
  \end{equation}
  for any $t\in[0,T]$ and $\xi\in J$. In Lemma
  \ref{lem:3.7}, it is shown that
  \begin{equation*}
    \norm{\zeta(t,\cdot)}_{W^{1,\infty}}+\norm{U(t,\cdot)}_{W^{1,\infty}}+\norm{h(t,\cdot)}_{L^\infty}+\norm{r(t,\cdot)}_{L^\infty}\leq C_1
  \end{equation*}
  for some constant $C_1$ which depends on the
  initial data. Then, \eqref{eq:bdbelr0} yields
  $y_\xi(t,\xi)\geq c_2$ for some constant
  $c_2>0$. It follows that for each $t\in[0,T]$,
  the mapping
  $y(t,\cdot)\colon(\xi_0,\xi_1)\mapsto(y(t,\xi_0),y(t,\xi_1))$
  is a Lipschitz homeomorphism and its inverse
  $y^{-1}$ is also Lipschitz. We denote the
  interval $(y(t,\xi_0),y(t,\xi_1))$ by $I^t$. By
  the definitions of Theorem \ref{th:umudef}, it
  follows that
  \begin{equation}
    \label{eq:defurho}
    u(t,x)=U(t,y^{-1}(t,x))\ \text{ and }\ \rho(t,x)=\frac{r(t,y^{-1}(t,x))}{y_\xi(t,y^{-1}(t,x))}
  \end{equation}
  for $x\in I^t$. Hence, $u(t,\cdot)\in
  W^{1,\infty}(I^t)$ and $\rho(t,\cdot)\in
  L^\infty(I^t)$. Since $\mu=y_\#(h(\xi)\,d\xi)$
  and $y^{-1}$ is Lipschitz on $I^t$, we have
  \begin{equation*}
    \mu(t,A)=\int_{y^{-1}(A)}h(t,\xi)\,d\xi=\int_A\frac{h\circ y^{-1}}{y_\xi\circ y^{-1}}\,dx=\int_A(u_x^2(t,x)+\bar\rho(t,x)^2)dx
  \end{equation*}
  for any subset $A$ of $I^t$. Hence, $\mu=\muac$
  in $I^t$. Let us now consider the case $p>1$. It
  follows from \eqref{eq:1pu0} that $y_0\in
  W^{p,\infty}(J)$. Since $U_0(\xi)=u_0\circ
  y_0(\xi)$ on $J$, we get $U_0\in
  W^{p,\infty}(J)$. Similarly, \eqref{eq:Ldef2}
  and \eqref{eq:Ldef4} yield $h_0\in
  W^{p-1,\infty}(J)$ and $r_0\in
  W^{p-1,\infty}(J)$. The key point is that, due
  to the quasilinear structure of the equivalent
  system \eqref{eq:govsysder}, this regularity is
  preserved by the flow. By differentiating $Q$ given by \eqref{eq:Q},
  we observe that $\partial_\xi^pQ$ is quasilinear
  in $\partial_\xi^py$, $\partial_\xi^p U$,
  $\partial_\xi^{p-1}h$ and
  $\partial_\xi^{p-1}\bar r$, that is, it can be
  written as
  \begin{equation}
    \label{eq:derpQ}
    \partial_\xi^pQ_\xi=a_1\partial_\xi^py_\xi+a_2\partial_\xi^pU_\xi+a_3\partial_\xi^{p-1}h+a_4\partial_\xi^{p-1}\bar r+a_5
  \end{equation}
  where $\{a_i\}_{i=1}^{5}$ are functions that
  are bounded in $L^\infty(J)$ by a constant
  depending only on
  $\norm{y-\id}_{W^{p-1,\infty}(J)}$,
  $\norm{U}_{W^{p-1,\infty}(J)}$,
  $\norm{h}_{W^{p-2,\infty}(J)}$, $\norm{\bar
    r}_{W^{p-2,\infty}(J)}$,
  $\norm{P}_{L^\infty(J)}$ and
  $\norm{Q}_{L^\infty(J)}$. The same property
  holds for $P$. One checks directly that the
  system \eqref{eq:govsysder} inherits the same
  quasilinearity property, that is, the $p$th
  derivative of each term on the right-hand side
  can be written as a linear combination of
  $\partial_\xi^py$, $\partial_\xi^p U$,
  $\partial_\xi^{p-1}h$ and
  $\partial_\xi^{p-1}\bar r$ as in
  \eqref{eq:derpQ}. Then, we use Gronwall's lemma
  and an induction argument on $j=1,\dots,p$ to
  prove, that if
  \begin{equation*}
    (\zeta(t,\cdot),U(t,\cdot),h(t,\cdot),r(t,\cdot))\in W^{j,\infty}(J)\times
    W^{j,\infty}(J)\times W^{j-1,\infty}(J)\times
    W^{j-1,\infty}(J)
  \end{equation*}
  holds for $t=0$, then it remains true for
  $t\in[0,T]$, for $j=1,\dots,p$. Using
  \eqref{eq:defurho}, we conclude that $u\in
  W^{p,\infty}(I^t)$ and $\rho\in
  W^{p-1,\infty}(I^t)$.
\end{proof}

\begin{corollary}\label{cor:presreg3}
  If the initial data $(u_0,\rho_0,\mu_0)\in\D$
  satisfies $u_0,\rho_0\in C^{\infty}(\Real)$,
  $\mu_0$ is absolutely continuous and
  $\rho_0^2(x)\geq d>0$ for all $x\in\Real$, then
  $u,\rho\in C^\infty(\Real\times\Real)$
  is the unique classical solution to
  \eqref{eq:rewchsys}.
\end{corollary}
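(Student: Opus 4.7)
My plan is to deduce the corollary from Theorem~\ref{th:presreg} applied at every scale, then upgrade spatial $C^\infty$-regularity to joint $(t,x)$-smoothness via the equations, and finally obtain uniqueness from the weak conservative formulation together with Theorem~\ref{th:mainX}.

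For the spatial regularity I would fix $p\in\N$ and an arbitrary bounded open interval $I_0=(x_0,x_1)$. Since $u_0,\rho_0\in C^\infty(\Real)$ are bounded together with all derivatives on $I_0$, we have $u_0\in W^{p,\infty}(I_0)$ and $\rho_0\in W^{p-1,\infty}(I_0)$; absolute continuity of $\mu_0$ gives $\muac=\mu_0$ on $I_0$; and the hypothesis $\rho_0^2\geq d>0$ holds on $I_0$. Thus $(u_0,\rho_0,\mu_0)$ is $p$-regular on $I_0$, and Theorem~\ref{th:presreg} provides the corresponding $p$-regularity of $(u,\rho,\mu)(t,\cdot)$ on the characteristic image interval $(y(t,\xi_0),y(t,\xi_1))$ for every $t\geq 0$. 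Given $(t_\ast,x_\ast)\in\Real_+\times\Real$, I pick $\xi_\ast$ with $y(t_\ast,\xi_\ast)=x_\ast$ and then $\xi_0<\xi_\ast<\xi_1$; by continuity of $y$ there is an open time neighborhood of $t_\ast$ on which $x_\ast\in(y(t,\xi_0),y(t,\xi_1))$, giving a space-time neighborhood of $(t_\ast,x_\ast)$ on which $u(t,\cdot)\in W^{p,\infty}$ and $\rho(t,\cdot)\in W^{p-1,\infty}$ for every $p$. Letting $p\to\infty$, $u(t,\cdot),\rho(t,\cdot)$ are $C^\infty$ in a neighborhood of $x_\ast$ for each such $t$, and $\mu(t,\cdot)=(u_x^2+\bar\rho^2)\,dx$ there.

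To promote this to joint $C^\infty$-smoothness in $(t,x)$, I would bootstrap on the equations \eqref{eq:rewchsys}. The representation \eqref{rep:p} shows that once $u(t,\cdot),\rho(t,\cdot)$ are spatially $C^\infty$, so is $P(t,\cdot)$, and the identities $u_t=-uu_x-P_x$ and $\rho_t=-(u\rho)_x$ yield $u_t,\rho_t\in C^\infty$ in $x$. An induction on $j+m$, in which each $\partial_t$ is replaced by a spatial operation via the system, produces all mixed partials $\partial_t^j\partial_x^m u$ and $\partial_t^j\partial_x^m\rho$ as polynomial expressions in spatial derivatives of $u,\rho$ and $P$. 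Joint continuity in $(t,x)$ follows from continuity of the semigroup $T_t$ together with the uniform-on-compacta bounds implicit in the proof of Theorem~\ref{th:presreg}.

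For uniqueness, any classical solution of \eqref{eq:rewchsys} with the given initial data satisfies the weak formulation of Definition~\ref{eq:defweakconssol} by integration by parts; smoothness makes the conservation law \eqref{eq:conslaw} hold pointwise, so that with $\mu=(u_x^2+\bar\rho^2)\,dx$ the identity \eqref{eq:weak4} is also satisfied. Hence every classical solution is a weak conservative solution with the given data, and Theorem~\ref{th:mainX} forces it to coincide with $T_t(u_0,\rho_0,\mu_0)$. The main technical point I anticipate is the bootstrap from fixed-time spatial smoothness to joint $(t,x)$-smoothness: one must verify that the non-local forcing $P$ inherits enough regularity and that the time derivatives obtained by repeated substitution are genuinely continuous in $t$; this relies on the quasilinear structure exploited in the proof of Theorem~\ref{th:presreg} and on the uniform lower bound $y_\xi\geq c_2>0$, which is precisely what the hypothesis $\rho_0^2\geq d>0$ guarantees.
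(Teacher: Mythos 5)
The paper itself offers no written proof of this corollary --- it is stated as an immediate consequence of Theorem \ref{th:presreg} --- and your treatment of the regularity part is exactly the intended argument: apply Theorem \ref{th:presreg} for every $p$ and every bounded interval, use that $y(t,\cdot)$ is surjective so the characteristic images exhaust $\Real$, and then trade time derivatives for space derivatives through \eqref{eq:rewchsys} and the representation of $P$ to get joint smoothness. That part is sound (one could add that time-regularity is cleanest in Lagrangian variables, where $X(t)$ solves the ODE system \eqref{eq:chsyseq} and all $t$-derivatives are given by repeated composition with the Lipschitz-on-bounded-sets right-hand side, combined with the uniform lower bound $y_\xi\geq c_2>0$ that makes $y(t,\cdot)^{-1}$ smooth).

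The genuine gap is in your uniqueness step. You argue that any classical solution is a weak conservative solution and that ``Theorem \ref{th:mainX} forces it to coincide with $T_t(u_0,\rho_0,\mu_0)$.'' But Theorem \ref{th:mainX} asserts only that $T_t$ \emph{produces} a weak conservative solution and is a continuous semigroup; nowhere does the paper prove that a weak conservative solution with given data is unique, and for this class of equations that is a genuinely hard statement that cannot be cited for free. The uniqueness of the \emph{classical} solution should instead be routed through the Lagrangian picture: given a classical solution $(u,\rho)$, the characteristics $y_t=u(t,y)$ are uniquely determined (the right-hand side is smooth), the associated Lagrangian variables $(y,U,h,r)$ solve \eqref{eq:chsyseq}, and Theorem \ref{th:global} gives uniqueness of solutions of that ODE system in $E$ for fixed initial data; translating back via $M$ and the relabeling invariance then identifies $(u,\rho)$ with $T_t(u_0,\rho_0,\mu_0)$. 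Alternatively, a direct Gronwall-type energy estimate for two classical solutions works. Either way, the appeal to a uniqueness property of weak conservative solutions must be replaced.
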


These classical solutions can be used to obtain
the global conservative solution of the
Camassa--Holm equation \eqref{eq:ch}. We consider
only initial data for which $\mu_0$ is absolutely
continuous.

\begin{theorem}
  \label{th:approxCH}
  Let $u_0\in H_\infty(\Real)$. We consider the
  approximating sequence of initial data
  $(u_0^n,\rho_0^n,\mu_{0}^n)\in\D$ given by
  $u_0^n\in C^\infty(\Real)$ with
  $\lim_{n\to\infty}u_0^n=u_0$ in
  $H_\infty(\Real)$, $\rho_0^n\in C^\infty(\Real)$
  with $\lim_{n\to\infty}\rho_0^n=0$ in
  $L^2_{\rm const}(\Real)$, $(\rho_0^n)^2\geq d_n$ for
  some constant $d_n>0$ and for all $n$ and
  $\mu_{0}^n=((u_{0,x}^{n})^2+(\bar\rho_0^n)^2)\,dx$. We
  denote by $(u^n,\rho^n)$ the unique classical
  solution to \eqref{eq:rewchsys} in
  $C^\infty(\Real_+\times\Real)\times
  C^\infty(\Real_+\times\Real)$ which corresponds
  to this initial data.  Then for every
  $t\in\Real_+$, the sequence $u^n(t,\cdot)$
  converges to $u(t,\cdot)$ in
  $L^{\infty}(\Real)$, where $u$ is the
  conservative solution of the Camassa--Holm
  equation \eqref{eq:ch} with initial data $u_0\in
  H_\infty(\Real)$.
\end{theorem}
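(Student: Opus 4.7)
The strategy is to work in Lagrangian coordinates, exploit the $d_\D$-continuity of $T_t$ established in Theorem~\ref{th:mainX}, and use the invariance of the Lagrangian density ($r_t=0$, $k_t=0$), which guarantees that vanishing density is preserved in time. With $\mu_0:=u_{0,x}^2\,dx$, the goal is to show $T_t(u_0^n,\rho_0^n,\mu_0^n)\to T_t(u_0,0,\mu_0)$ in $d_\D$ and to identify the latter with the conservative CH solution of \cite{GHR:12}.

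\textbf{Step 1: Lagrangian convergence of initial data.} I would first show $L_n:=L(u_0^n,\rho_0^n,\mu_0^n)\to L_0:=L(u_0,0,\mu_0)$ in the $E$-norm on $\F_0$. The hypotheses give $u_0^n\to u_0$ in $H_\infty$ (hence in $L^\infty$), $\bar\rho_0^n\to 0$ in $L^2$ and $k_0^n\to 0$, so the finite Radon measures $\mu_0^n=((u_{0,x}^n)^2+(\bar\rho_0^n)^2)\,dx$ converge to $\mu_0$ in total variation. By \eqref{eq:Ldef1}, $y^n$ is the generalized inverse of $x\mapsto\mu_0^n((-\infty,x))+x$, which depends continuously on $\mu_0^n$ in that topology, and the remaining components $U^n=u_0^n\circ y^n$, $h^n=1-y^n_\xi$, $r^n=\rho_0^n\circ y^n\,y^n_\xi$ converge by composition. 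The only slightly delicate point is $r^n\to 0$ in $L^2_{\rm const}$, which follows from $\bar r^n(\xi)^2\leq(\bar\rho_0^n)^2\circ y^n\,y^n_\xi$ together with the change of variables $x=y^n(\xi)$.

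\textbf{Step 2: propagation and identification.} By Theorem~\ref{th:mainX}, $T_t$ is $d_\D$-continuous, so $(u^n,\rho^n,\mu^n)(t)\to T_t(u_0,0,\mu_0)=:(u,\rho^\star,\mu)(t)$ in $d_\D$ for every $t\geq 0$. The Lagrangian counterpart of the limiting initial data has $\bar r_0\equiv 0$ and $k_0=0$; equations \eqref{eq:chsyseq7}--\eqref{eq:chsyseq8} then force $\bar r(t,\cdot)\equiv 0$ and $k(t)\equiv 0$ for all $t$, whence $r(t,\cdot)\equiv 0$, and Theorem~\ref{th:umudef} gives $\rho^\star(t,\cdot)\equiv 0$. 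With these vanishings, the Lagrangian system \eqref{eq:chsyseq} together with \eqref{eq:P}--\eqref{eq:Q} reduces term by term to the Lagrangian formulation of the scalar CH equation used in \cite{GHR:12}, so $u(t,\cdot)$ coincides with the conservative solution of \eqref{eq:ch} with initial data $u_0$.

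\textbf{Step 3: from $d_\D$ to $L^\infty$, and main obstacle.} The $E$-convergence of the $\F_0$-representatives yields $U^n(t,\cdot)\to U(t,\cdot)$ in $L^\infty$ (via the $H_{0,\infty}$-factor) and $y^n(t,\cdot)\to y(t,\cdot)$, $H^n(t,\cdot)\to H(t,\cdot)$ uniformly (via the $V$-factor and $H^n=\id-y^n$). For $x\in\Real$, choose $\xi_n,\xi$ with $y^n(t,\xi_n)=y(t,\xi)=x$; the identity $y+H=\id$ on $\F_0$ gives $\xi_n-\xi=H^n(\xi_n)-H(\xi)$, and uniform convergence of $H^n$ with uniform continuity of $H$ force $\sup_x|\xi_n-\xi|\to 0$. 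Combined with the H\"older-$1/2$ continuity of $U$ (inherited from $U_\xi\in L^2$) and the $L^\infty$-convergence of $U^n$, this produces $\|u^n(t,\cdot)-u(t,\cdot)\|_{L^\infty}\to 0$; on plateaus of $y$ (where $y_\xi=0$) $U$ is constant, so the preimage ambiguity is harmless. I expect the principal obstacle to lie in Step 2: verifying that the restriction of \eqref{eq:chsyseq}, \eqref{eq:P}, \eqref{eq:Q} to $r\equiv k\equiv 0$ matches term by term the Lagrangian CH system of \cite{GHR:12}, so that $T_t(u_0,0,\mu_0)$ genuinely reproduces the conservative CH solution rather than merely some weak solution of \eqref{eq:ch}.
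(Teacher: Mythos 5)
Your overall strategy is exactly the paper's: convergence of the initial data in $d_\D$ (this is Lemma~\ref{thm:top1}), propagation by the stability of the semigroup (Theorem~\ref{th:global} via Theorem~\ref{th:mainX}), identification of $T_t(u_0,0,\mu_0)$ with the conservative CH solution through the reduction $\bar r\equiv k\equiv 0$ of the Lagrangian system, and a final transfer from $d_\D$-convergence to $L^\infty$-convergence (Lemma~\ref{thm:top2}). Steps 1 and 2 are sound sketches of what the paper does; in particular the point you flag as the ``principal obstacle'' in Step 2 is unproblematic, since $r_t=0$ and $k_t=0$ make the reduction to the scalar Lagrangian system of \cite{GHR:12} immediate.

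The genuine gap is in Step 3. The claim that $\sup_x\abs{\xi_n-\xi}\to 0$ does not follow from $\xi_n-\xi=H^n(\xi_n)-H(\xi)$: writing $H^n(\xi_n)-H(\xi)=(H^n(\xi_n)-H(\xi_n))+(H(\xi_n)-H(\xi))$ and using that $H$ is $1$-Lipschitz (since $0\leq h\leq 1$ on $\F_0$) only yields $\abs{\xi_n-\xi}\leq\norm{H^n-H}_{L^\infty}+\abs{\xi_n-\xi}$, which is vacuous. Worse, the claim is actually false at times when $\mu(t)$ has atoms: there $y(t,\cdot)$ has a plateau $[a,b]$ with $b-a=\mu(\{x_0\})>0$, the admissible $\xi$ ranges over all of $[a,b]$, while $\xi_n$ may be pinned near a single point (for the smooth approximants $y^n_\xi>0$), so $\abs{\xi_n-\xi}$ need not become small. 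Since energy concentration is precisely the regime conservative solutions are built to handle, this cannot be dismissed. The remark that $U$ is constant on plateaus does not by itself repair the estimate, because you still must compare $U^n(\xi_n)$ with $U^n(\xi)$ across the plateau. The paper's Lemma~\ref{thm:top2} avoids this entirely: it only needs $\abs{\xi_n-\xi}$ \emph{bounded} uniformly in $n$ and $x$ (which follows from $\abs{y(\xi)-\xi}\leq\mu(\Real)$ and $\norm{h_n}_{L^1}\to\norm{h}_{L^1}$), and then estimates
\begin{equation*}
\abs{U_n(\xi_n)-U_n(\xi)}\leq\sqrt{\abs{\xi_n-\xi}}\Big(\int_{\xi}^{\xi_n}y_{n,\xi}h_n\,d\eta\Big)^{1/2}\leq\sqrt{\abs{\xi_n-\xi}}\,\abs{y_n(\xi_n)-y_n(\xi)}^{1/2}=\sqrt{\abs{\xi_n-\xi}}\,\abs{y(\xi)-y_n(\xi)}^{1/2},
\end{equation*}
using Cauchy--Schwarz and $U_{n,\xi}^2\leq y_{n,\xi}h_n$ from \eqref{eq:lagcoord3}; the right-hand side is $O(\norm{y-y_n}_{L^\infty}^{1/2})$, which tends to zero. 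You should replace your uniform-preimage argument by this estimate (or, alternatively, by the uniform continuity of the limit $u$ itself applied to $y(\xi_n)\to y(\xi)$).
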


\begin{proof}
  The proof relies on the stability of the
  semigroup of solutions and on the Lemmas
  \ref{thm:top1} and \ref{thm:top2} below, which
  compare the topology of $\D$ with standard
  topologies. From Lemma~\ref{thm:top1}, it
  follows that $(u_0^n,\rho_0^n,\mu_0^n)$ converges
  to $(u_0,0,\mu_0)$ in $\D$. By the stability of
  the semigroup with respect to $\D$, we have
  that, for any $t\in\Real_+$,
  $(u^n(t),\rho^n(t),\mu^n(t))$ converges to
  $(u(t),0,\mu(t))$ in $\D$. Then, Lemma
  \ref{thm:top2} gives that $u^n(t,\cdot)$
  converges to $u(t,\cdot)$ in $L^\infty(\Real).$
\end{proof}

In order to complete the proof of the previous and
main theorem, we show the following lemmas.
\begin{lemma}\label{thm:top1}
 The mapping
\begin{equation}
 (u,\rho)\mapsto (u,\rho, (u_x^2+ \bar\rho^2)dx)
\end{equation}
is continuous from $H_{0,\infty}(\Real)\times
L^2_{\rm const}(\Real)$ into $\D$. In other words,
given a sequence $(u_n,\rho_n)$ in
$H_{0,\infty}(\Real)\times L^2_{\rm const}(\Real)$
which converges to $(u,\rho)$ in
$H_{0,\infty}(\Real)\times L^2_{\rm const}(\Real)$,
then $(u_n,\rho_n,(u_{n,x}^2+\bar \rho_n^2)dx)$
converges to $(u,\rho, (u_x^2+\bar \rho^2)dx)$ in
$\D$.
\end{lemma}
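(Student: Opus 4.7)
The plan is to verify convergence of the Lagrangian representatives component-by-component in the $E$-norm, since by definition $d_\D((u_n,\rho_n,\mu_n),(u,\rho,\mu))=\|L(u_n,\rho_n,\mu_n)-L(u,\rho,\mu)\|_E$. Writing $X_n=(y_n,U_n,h_n,r_n)=L(u_n,\rho_n,\mu_n)$ and $X=(y,U,h,r)=L(u,\rho,\mu)$ with $\mu_n=(u_{n,x}^2+\bar\rho_n^2)\,dx$ absolutely continuous, I must show $\zeta_n\to\zeta$ in $V$, $U_n\to U$ in $H_{0,\infty}(\Real)$, $h_n\to h$ in $L^2$, and $r_n\to r$ in $L^2_{\rm const}$.

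Since $\mu_n$ is absolutely continuous, the implicit formula \eqref{eq:Ldef1} reduces to $y_n=g_n^{-1}$ with $g_n(x)=x+\int_{-\infty}^{x}(u_{n,x}^2+\bar\rho_n^2)\,dz$. The hypotheses $u_n\to u$ in $H_{0,\infty}$ and $\bar\rho_n\to\bar\rho$ in $L^2$ give $u_{n,x}^2+\bar\rho_n^2\to u_x^2+\bar\rho^2$ in $L^1(\Real)$, so $g_n\to g$ uniformly. Because $g_n'\geq 1$, the inverses are $1$-Lipschitz, whence $\|\zeta_n-\zeta\|_{L^\infty}=\|y_n-y\|_{L^\infty}\to 0$. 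Moreover, differentiating $g_n\circ y_n=\id$ yields $y_{n,\xi}=1/(1+(u_{n,x}^2+\bar\rho_n^2)\circ y_n)\in[0,1]$ and $h_n=(u_{n,x}^2+\bar\rho_n^2)\circ y_n\cdot y_{n,\xi}$, with the same formulas holding for $y_\xi$ and $h$.

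To pass to the limit in the derivatives, I split
\begin{equation*}
h_n-h=\bigl[(u_{n,x}^2+\bar\rho_n^2)-(u_x^2+\bar\rho^2)\bigr]\circ y_n\cdot y_{n,\xi}+(u_x^2+\bar\rho^2)\circ y_n\cdot y_{n,\xi}-(u_x^2+\bar\rho^2)\circ y\cdot y_\xi.
\end{equation*}
The first term is handled by the change of variable $x=y_n(\xi)$ (using $y_{n,\xi}\leq 1$ to drop one copy) and the $L^1$ convergence of the integrands. The second term is treated by approximating $u_x^2+\bar\rho^2$ by continuous compactly supported functions: for such approximants the uniform convergence $y_n\to y$ combined with $y_{n,\xi},y_\xi\leq 1$ gives direct pointwise-dominated convergence, and the approximation error is uniformly controlled by $\|u_x^2+\bar\rho^2-\varphi\|_{L^1}$ via the same change of variable. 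This yields $h_n\to h$ in $L^1$, and the corresponding $L^2$ statement follows from the uniform $L^\infty$ bound $h_n\leq 1$ together with the $L^1$ convergence. Since $\zeta_{n,\xi}=-h_n$, this also gives $\zeta_n\to\zeta$ in $V$. An analogous two-term decomposition handles $U_{n,\xi}=u_{n,x}\circ y_n\cdot y_{n,\xi}$ and $\bar r_n=\bar\rho_n\circ y_n\cdot y_{n,\xi}$, producing convergence in $L^2$; the $L^\infty$ parts $U_n-U=(u_n-u)\circ y_n+(u\circ y_n-u\circ y)$ tend to zero by the Sobolev embedding and by continuity of $u$ together with $\|y_n-y\|_{L^\infty}\to 0$ (after subtracting the common asymptotic contribution from $\chi\circ y_n$). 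The constant components satisfy $k_n\to k$ trivially.

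The main technical obstacle is the continuity of the composition-and-weight map $(f,y)\mapsto f\circ y\cdot y_\xi$ from $L^1(\Real)\times\F_0$ into $L^1(\Real)$ when $f$ is merely integrable; this is what forces the density argument above, since neither $u_x^2+\bar\rho^2$ nor its counterparts are a priori continuous. Once this composition-continuity is established, all four convergences assemble to $\|X_n-X\|_E\to 0$, which is exactly the asserted continuity of the mapping into $\D$.
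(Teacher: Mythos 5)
Your overall strategy --- pass to the Lagrangian representatives via $L$, deduce $y_n\to y$ uniformly from the $L^1$ convergence of $g_n=u_{n,x}^2+\bar\rho_n^2$, and then use a density argument with continuous compactly supported approximants to handle compositions with the merely integrable density --- is exactly the paper's. The gap is in your treatment of the second term in your decomposition of $h_n-h$, namely $(u_x^2+\bar\rho^2)\circ y_n\, y_{n,\xi}-(u_x^2+\bar\rho^2)\circ y\, y_\xi$. For a continuous compactly supported approximant $\varphi$ you claim that $\varphi\circ y_n\, y_{n,\xi}\to\varphi\circ y\, y_\xi$ by ``direct pointwise-dominated convergence.'' This does not hold: $\varphi\circ y_n\to\varphi\circ y$ uniformly, but the weight $y_{n,\xi}=1/(1+g_n\circ y_n)$ need not converge pointwise (or a.e.) to $y_\xi$, since $g_n\circ y_n\not\to g\circ y$ pointwise when $g$ is only in $L^1$. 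Worse, the leftover piece $\varphi\circ y\,(y_{n,\xi}-y_\xi)$ requires precisely the $L^1_{\rm loc}$ convergence $y_{n,\xi}\to y_\xi$, i.e.\ $h_n\to h$, which is the statement being proved; as organized, the step is circular. The paper avoids this by first invoking the exact algebraic identity $y_{n,\xi}-y_\xi=(g\circ y-g_n\circ y_n)\,y_{n,\xi}y_\xi$, so that the only objects to be compared are $g\circ y$, $g\circ y_n$ and $g_n\circ y_n$, each multiplied by the harmless bounded weight $y_{n,\xi}y_\xi\le 1$; the density argument then closes without ever needing convergence of the weights themselves.

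A second, smaller under-specification concerns $\bar r_n=\bar\rho_n\circ y_n\, y_{n,\xi}$: the ``analogous two-term decomposition'' in $L^2$ is delicate because $\bar\rho\circ y$ is not pointwise bounded (one only has $(\bar\rho\circ y)^2 y_\xi\le h\le 1$), so a term like $\bar\rho\circ y\,(y_{n,\xi}-y_\xi)$ cannot be estimated in $L^2$ from $\norm{y_{n,\xi}-y_\xi}_{L^1}$ alone. The paper instead expands $\norm{\bar r_n-\bar r}_{L^2}^2$ into six products arranged so that every factor of $\bar\rho$ or $\bar\rho_n$ carries a full weight $y_\xi$ or $y_{n,\xi}$, and it treats $U_{n,\xi}$ through the identity $U_{n,\xi}^2=h_n-h_n^2-\bar r_n^2$ rather than by direct decomposition of $u_{n,x}\circ y_n\, y_{n,\xi}$. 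Your argument can be repaired along these lines, but as written the central composition-continuity claim is not established.
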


\begin{proof} The proof follows the same lines as the one of \cite[Proposition 5.1]{HolRay:07}, which we will not repeat here. Here we focus on showing that $\bar r_n\to\bar r\in L^2(\Real)$. Note that we know already from the convergence in Eulerian coordinates that $\vert k_n-k\vert$ converges to $0$.

  We write $g_n=u_{n,x}^2+\bar \rho_n^2$ and
  $g=u_x^2+\bar\rho^2$. Let
  $X_n=(y_n,U_n,h_n,r_n)$ and $X=(y,U,h,r)$ be the
  representatives in $\F_0$ given by
  \eqref{eq:Ldef} of $L(u_n,\rho_n,
  (u_{n,x}^2+\bar\rho^2)dx)$ and
  $L(u,\rho,(u_x^2+\bar\rho^2)dx)$,
  respectively. In particular, we have
\begin{equation}
\label{eq:ydefp}
\int_{-\infty}^{y(\xi)}g(x)\,dx+y(\xi)=\xi\ ,\
\int_{-\infty}^{y_n(\xi)}g_n(x)\,dx+y_n(\xi)=\xi
\end{equation}
and, after taking the difference between the two
equations, we obtain
\begin{equation}
\label{eq:gmingn}
\int_{-\infty}^{y(\xi)}(g-g_n)(x)\,dx+\int_{y_n(\xi)}^{y(\xi)}g_n(x)\,dx+y(\xi)-y_n(\xi)=0.
\end{equation}
Since $g_n$ is positive,
$\abs{y-y_n+\int_{y_n}^y g_n(x)\,d\xi}=\abs{y-y_n}+\abs{\int_{y_n}^y g_n(x)\,d\xi}$
and \eqref{eq:gmingn} implies 
\begin{equation*}
\abs{y(\xi)-y_n(\xi)}\leq\int_{-\infty}^{y(\xi)}\abs{g-g_n}(x)\,dx
\leq\norm{g-g_n}_{L^1}.
\end{equation*}
Since $u_n\to u$ in $H_{0,\infty}(\Real)$ and
$\rho_n\to\rho\in L^2_{\rm const}(\Real)$, also
$g_n\to g$ in $L^1(\Real)$ and it follows that
$\zeta_n\to\zeta\in L^\infty(\Real)$.  The
measures $(u_{x}^2+\bar \rho^2)dx$ and
$(u_{n,x}^2+\bar \rho_n^2)dx$ have, by definition,
no singular part and therefore
\begin{equation}
\label{eq:yxidef2}
y_\xi=\frac{1}{g\circ y+1}
\ \text{ and }\ y_{n,\xi}=\frac{1}{g_n\circ y_n+1}
\end{equation}
almost everywhere. Hence,
\begin{align}
\notag \zeta_{n,\xi}-\zeta_\xi&=(g\circ y-g_n\circ
y_n)y_{n,\xi}y_\xi\\
\label{eq:diffyn}
&=(g\circ y-g\circ
y_n)y_{n,\xi}y_\xi+(g\circ y_n-g_n\circ y_n)y_{n,\xi}y_\xi.
\end{align}
Since $0\leq y_\xi\leq1$, we have
\begin{equation}
\label{eq:gyn1}
\int_\Real\abs{g\circ y_n-g_n\circ y_n}y_{n,\xi}y_\xi\,d\xi\leq\int_\Real\abs{g\circ
y_n-g_n\circ y_n}y_{n,\xi}\,d\xi
=\norm{g-g_n}_{L^1}.
\end{equation}
For any $\epsi>0$, there exists a continuous
function $l$ with compact support such that
$\norm{g-l}_{L^1}\leq\epsi/3$.  We can decompose
the first term on the right-hand side of
\eqref{eq:diffyn} into
\begin{multline}
\label{eq:secdec}
(g\circ y-g\circ y_n)y_{n,\xi}y_\xi=(g\circ y-l\circ y)y_{n,\xi}y_\xi\\+(l\circ y-l\circ
y_n)y_{n,\xi}y_\xi+(l\circ
y_n-g\circ y_n)y_{n,\xi}y_\xi.
\end{multline}
Then, we have
\begin{equation*}
\int_\Real\abs{g\circ y-l\circ
y}y_{n,\xi}y_\xi\,d\xi\leq \int\abs{g\circ
y-l\circ y}y_\xi\,d\xi
=\norm{g-l}_{L^1}\leq\frac{\epsi}{3},
\end{equation*}
and, similarly, we obtain $\int_\Real\abs{g\circ
y_n-l\circ y_n}y_{n,\xi}y_\xi\,d\xi\leq\epsi/3$.
Since $y_n\to y$ in $\Linf$ and $l$ is continuous
with compact support, we obtain by applying the Lebesgue
dominated convergence theorem,  that $l\circ
y_n\to l\circ y$ in $L^1(\Real)$, and thus we can choose $n$
big enough so that
\begin{equation*}
\int_\Real \abs{l\circ y-l\circ
y_n}y_{n,\xi}y_\xi\,d\xi\leq\norm{l\circ y-l\circ
y_n}_{L^1}\leq\frac{\epsi}{3}.
\end{equation*}
Hence, from \eqref{eq:secdec}, we get that
$\int_\Real\abs{g\circ y-g\circ
y_n}y_{n,\xi}y_\xi\,d\xi\leq\epsi$ so that
\begin{equation*}
\lim_{n\to\infty}\int_\Real\abs{g\circ y-g\circ
y_n}y_{n,\xi}y_\xi\,d\xi=0,
\end{equation*} 
and, from \eqref{eq:diffyn} and \eqref{eq:gyn1},
it follows that $\zeta_{n,\xi}\to \zeta_\xi$ in
$L^1(\Real)$. Since $X_n\in\F_0$, $\zeta_{n,\xi}$ is
bounded in $L^\infty(\Real)$,  we finally get that
$\zeta_{n,\xi}\to \zeta_\xi$ in $L^2(\Real)$ and, by
\eqref{eq:Ldef2}, $h_{n}\to h$ in
$L^2(\Real)$. 

We are now ready to show that 
$\bar r_n\to\bar r$ in
$\Ltwo$. By definition we
have $\bar r_n=\bar\rho_n\circ y_ny_{n,\xi}$ and
$\bar r=\bar\rho\circ yy_\xi$, so that 
\begin{align}\label{est:stab2}
 \norm{\bar r_n-\bar r}_{L^2}^2 & = \norm{\bar \rho_n\circ y_ny_{n,\xi}-\bar \rho\circ yy_\xi}_{L^2}^2\\  \nn
& = \int_\Real (\bar\rho_n\circ y_n)^2y_{n,\xi}(y_{n,\xi}-y_\xi)d\xi+\int_\Real \bar\rho_n\circ y_ny_{n,\xi}(\bar\rho_n\circ y_n-\bar\rho_n\circ y)y_\xi d\xi\\ \nn
&\quad +\int_\Real \bar\rho\circ y_ny_{n,\xi}(\bar\rho_n\circ y-\bar\rho\circ y)y_\xi d\xi+\int_\Real (\bar \rho\circ y)^2y_\xi(y_\xi-y_{n,\xi})d\xi\\ \nn
&\quad + \int_\Real \bar\rho\circ yy_\xi(\bar\rho\circ y-\bar \rho\circ y_n)y_{n,\xi}d\xi+\int_\Real \bar \rho\circ yy_\xi(\bar \rho\circ y_n-\bar \rho_n\circ y_n)y_{n,\xi}d\xi.
\end{align}
The first and the fourth term have the same structure, and we therefore only treat the first one. Hence  
\begin{equation}
\norm{(\bar \rho_n\circ y_n)^2y_{n,\xi}( y_{n,\xi}-y_{\xi})}_{L^1}\leq \norm{y_{\xi}-y_{n,\xi}}_{L^1}
\end{equation}
because $(\bar\rho_n\circ y_n)^2y_{n,\xi}\leq h\leq 1$ and thus it tends to $0$ as $n\to\infty$.
In order to investigate the fifth term we will use that $\bar\rho\in L^2(\Real)$ and therefore for any $\varepsilon>0$ there exists a continuous function $\tilde l$ with compact support such that $\norm{\bar\rho-\tilde l}_{L^2}\leq \varepsilon /(3\norm{\rho}_{L^2})$.
Thus we can write
\begin{align*}
 \norm{\bar\rho\circ yy_{\xi}(\bar\rho\circ y-\bar\rho\circ y_n)y_{n,\xi}}_{L^1}&\leq \norm{\bar\rho\circ yy_{\xi}(\bar\rho\circ y-\tilde  l\circ y)y_{n,\xi}}_{L^1}\\ 
& \quad +\norm{\bar\rho\circ yy_{\xi}(\tilde l\circ y-\tilde l\circ y_n)y_{n,\xi}}_{L^1}\\
&\quad +\norm{\bar\rho\circ yy_{\xi}(\tilde l\circ y_n-\bar\rho\circ y_n)y_{n,\xi}}_{L^1} \\ 
& \leq \norm{\bar \rho}_{L^2}(2 \norm{\bar\rho-\tilde l}_{L^2}+\norm{\tilde l\circ y_n-\tilde l\circ y}_{L^2}).
\end{align*}
 Since $y_n\to y\in L^\infty(\Real)$ and $\tilde l$ is continuous with compact support, we obtain by Lebesque's dominated convergence theorem that $\tilde l\circ y_n\to \tilde l\circ y$ in $L^2(\Real)$. In particular, we can choose $n$ big enough so that  $\norm{\bar\rho\circ yy_{\xi}(\bar\rho\circ y-\bar\rho\circ y_n)y_{n,\xi}}_{L^1}\leq \varepsilon$. Since $\varepsilon$ can be chosen arbitrarily small we  obtain  in particular that  
\begin{equation}
 \lim_{n\to\infty}\norm{\bar\rho\circ yy_{\xi}(\bar\rho\circ y-\bar\rho\circ y_n)y_{n,\xi}}_{L^1}=0.
\end{equation}
This immediately implies that also the second term tends to zero using $\bar\rho_n\circ y_n-\bar \rho_n\circ y=(\bar\rho_n\circ y_n-\bar \rho\circ y_n)+(\bar \rho\circ y_n-\bar\rho\circ y)+(\bar\rho\circ y-\bar \rho_n\circ y)$.
As far as the third (and the last) term is concerned, we can conclude as follows
\begin{align*}
 \norm{\bar\rho\circ y_ny_{n,\xi}(\bar\rho_n\circ y-\bar\rho\circ y)y_\xi}_{L^1}&\leq \norm{\bar\rho\circ y_n y_{n,\xi}}_{L^2}\norm{(\bar\rho_n\circ y-\bar\rho\circ y)y_\xi}_{L^2}\\ &\leq \norm{\bar\rho}_{L^2}\norm{\bar\rho_n-\bar\rho}_{L^2},
\end{align*}
which again tends to zero since by assumption $\bar\rho_n\to\bar\rho\in L^2(\Real)$.
Hence all terms in \eqref{est:stab2} tend to $0$ as $n\to\infty$ and therefore $\bar r_n\to\bar r\in L^2(\Real)$.

Finally we want to point out that when proving $U_{n,\xi}\to U_\xi$ in $L^2(\Real)$, one has that 
\begin{equation}
\label{eq:unxirew}
U_{n,\xi}^2=h_{n}-h_{n}^2-\bar r_n^2,
\end{equation}
and a corresponding identity holds
for $U_\xi$ which allows us to conclude as in \cite[Proposition 5.1]{HolRay:07}. 

\end{proof}

\begin{lemma}\label{thm:top2}
 Let $(u_n,\rho_n,\mu_n)$ be a sequence in $\D$ that converges to $(u,\rho,\mu)$ in $\D$. Then 
\begin{equation*}
  u_n\rightarrow u \text{ in } L^\infty(\Real),\quad \bar \rho_n \overset{\ast}{\rightharpoonup}\bar \rho,\quad k_n \to k\in \Real, \quad  \text{ and }\quad  \mu_n \overset{\ast}{\rightharpoonup}\mu.
\end{equation*}
\end{lemma}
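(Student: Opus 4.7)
The plan is to unfold the metric $d_\D$ into Lagrangian convergence and then push the resulting estimates forward through $M$ back to Eulerian quantities. Let $X_n=(y_n,U_n,h_n,r_n)=L(u_n,\rho_n,\mu_n)$ and $X=(y,U,h,r)=L(u,\rho,\mu)$, both canonically in $\F_0$. By definition of $d_\D$, $\|X_n-X\|_E\to 0$, which reads componentwise as $\zeta_n\to\zeta$ in $V$ (so $y_n\to y$ in $L^\infty(\Real)$ and $y_{n,\xi}\to y_\xi$ in $L^2(\Real)$), $\bar U_n\to\bar U$ in $H^1(\Real)$, $c_n\to c$, $h_n\to h$ in $L^2(\Real)$, $\bar r_n\to\bar r$ in $L^2(\Real)$, and $k_n\to k$. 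The last of these is the third conclusion of the lemma, obtained for free. Moreover $U_n=\bar U_n+c_n\chi\circ y_n\to U$ in $L^\infty(\Real)$, since $\bar U_n\to\bar U$ in $H^1\hookrightarrow L^\infty$, $c_n\to c$, and $\chi\circ y_n\to\chi\circ y$ uniformly by uniform continuity of $\chi$.

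For $u_n\to u$ in $L^\infty(\Real)$: since $X_n\in\F_0$, $y_n+H_n=\id$, so $y_n$ is continuous, non-decreasing, $1$-Lipschitz, and surjective onto $\Real$; by Theorem~\ref{th:umudef}, $U_n$ is constant on each flat interval of $y_n$, so $u_n(x):=U_n(\xi)$ for any $\xi$ with $y_n(\xi)=x$ is unambiguously defined. Given $x\in\Real$, choose such a $\xi$ and split
\begin{equation*}
u_n(x)-u(x)=\big(U_n(\xi)-U(\xi)\big)+\big(u(y(\xi))-u(y_n(\xi))\big).
\end{equation*}
The first term is controlled by $\|U_n-U\|_{L^\infty}$. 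For the second, since $u\in H_{0,\infty}(\Real)$ decomposes as $\bar u+c\chi$ with $\bar u\in H^1$ Hölder-$1/2$ and $\chi$ Lipschitz, $u$ is uniformly continuous on $\Real$ with modulus $\omega_u$, whence $|u(y(\xi))-u(y_n(\xi))|\leq\omega_u(\|y-y_n\|_{L^\infty})$. Both bounds are uniform in $\xi$, hence in $x$, and both tend to zero.

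For the weak-$\ast$ convergences $\bar\rho_n\overset{\ast}{\rightharpoonup}\bar\rho$ and $\mu_n\overset{\ast}{\rightharpoonup}\mu$, test against $\phi\in C_c(\Real)$. The pushforward definitions $\mu_n=(y_n)_\#(h_n\,d\xi)$ and $\bar\rho_n\,dx=(y_n)_\#(\bar r_n\,d\xi)$ give
\begin{equation*}
\int_\Real\phi\,d\mu_n-\int_\Real\phi\,d\mu=\int_\Real(\phi\circ y_n-\phi\circ y)h_n\,d\xi+\int_\Real\phi\circ y\,(h_n-h)\,d\xi,
\end{equation*}
and an analogous identity with $h_n,h$ replaced by $\bar r_n,\bar r$. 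The second term is bounded by $\|\phi\circ y\|_{L^2}\|h_n-h\|_{L^2}$ (resp.\ $\|\bar r_n-\bar r\|_{L^2}$) and tends to zero, noting that $\phi\circ y$ has compact $\xi$-support since $y(\xi)=\xi-H(\xi)$ with $H$ bounded. For the first, the same bounded-deviation property applied to both $y_n$ and $y$ confines $\phi\circ y_n-\phi\circ y$ to a common bounded $\xi$-interval for all large $n$; uniform continuity of $\phi$ together with $\|y_n-y\|_{L^\infty}\to 0$ then gives $\|\phi\circ y_n-\phi\circ y\|_{L^2}\to 0$, and combined with the uniform $L^2$-bound on $h_n$ (resp.\ $\bar r_n$) this term also vanishes.

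The main obstacle is the $L^\infty$-convergence of $u_n$: since $\mu_n$ may concentrate, $y_n$ is not a diffeomorphism and the literal inverse $y_n^{-1}$ is multi-valued. The argument sidesteps this by choosing any preimage $\xi\in y_n^{-1}(\{x\})$ and exploiting (i) that $U_n$ is constant on flat intervals of $y_n$, so $u_n(x)$ is unambiguous, and (ii) that $u$ itself is uniformly continuous on $\Real$, which lets the two different changes of variables $y_n$ and $y$ be compared through a single common modulus. The remaining parts are routine pushforward bookkeeping.
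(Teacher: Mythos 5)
Your proof is correct, and its overall skeleton (unfolding $d_\D$ into componentwise Lagrangian convergence, then transporting back through $M$, with the weak-$\ast$ claims handled by pushforward bookkeeping) matches the paper's. The one step where you genuinely diverge is the $L^\infty$ convergence of $u_n$. The paper picks $\xi$ with $y(\xi)=x$ and $\xi_n$ with $y_n(\xi_n)=x$, sets $x_n=y_n(\xi)$, and writes $u_n(x)-u(x)=\bigl(u_n(x)-u_n(x_n)\bigr)+\bigl(U_n(\xi)-U(\xi)\bigr)$; the discrepancy term then sits on the \emph{sequence} $u_n$ and is controlled by Cauchy--Schwarz in Lagrangian variables via $U_{n,\xi}^2\leq y_{n,\xi}h_n$ and $h_n\leq 1$, which amounts to a uniform H\"older-$1/2$ bound on $u_n$, together with a separate estimate showing $\abs{\xi_n-\xi}$ is bounded. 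You instead pick $\xi$ with $y_n(\xi)=x$ and place the discrepancy on the \emph{limit} $u$, writing $u_n(x)-u(x)=\bigl(U_n(\xi)-U(\xi)\bigr)+\bigl(u(y(\xi))-u(y_n(\xi))\bigr)$ and invoking the uniform continuity of the single fixed function $u\in H_{0,\infty}(\Real)$ (H\"older-$1/2$ from $H^1$ plus Lipschitz from $\chi$). Your route is more elementary: it avoids the energy identity \eqref{eq:lagcoord3} and the auxiliary bound on $\abs{\xi_n-\xi}$ entirely for this step. What the paper's version buys in exchange is an equicontinuity-type estimate on the family $u_n$ itself, uniform in $n$, which is of independent use but not needed for the stated conclusion. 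Both arguments are sound; the remaining parts of your proof (the $k_n\to k$ statement read off from the $E$-norm, and the two pushforward splittings with the common compact $\xi$-support coming from $y=\id-H$ with $H$ bounded) agree with the paper's in substance.
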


\begin{proof}
  We denote by $X_n=(y_n,U_n,h_n,r_n)$ and
  $X=(y,U,h,r)$ the representative of
  $L(u_n,\rho_n,\mu_n)$ and $L(u,\rho,\mu)$ given
  by \eqref{eq:Ldef}. For any $x\in\Real$, there
  exist $\xi_n$ and $\xi$, not necessarily unique,
  such that $x=y_n(\xi_n)$ and $x=y(\xi)$. We set
  $x_n=y_n(\xi)$. We have
\begin{equation}
\label{eq:udec}
u_n(x)-u(x)=u_n(x)-u_n(x_n)+U_n(\xi)-U(\xi)
\end{equation}
and
\begin{align}
\notag
\abs{u_n(x)-u_n(x_n)}&=\abs{\int_{\xi}^{\xi_n}U_{n,\xi}(\eta)\,d\eta}\\
\notag
&\leq\sqrt{\xi_n-\xi}\left(\int_{\xi}^{\xi_n}U_{n,\xi}^2\,d\eta\right)^{1/2}&\text{(Cauchy--Schwarz)}\\
\notag
&\leq\sqrt{\xi_n-\xi}\left(\int_{\xi}^{\xi_n}y_{n,\xi}h_{n}\,d\eta\right)^{1/2}&\text{(from
\eqref{eq:lagcoord3})}\\ \notag
&\leq\sqrt{\xi_n-\xi}\sqrt{\abs{y_n(\xi_n)-y_n(\xi)}}&\text{(since
$h_{n}\leq1$)}\\ \notag
&=\sqrt{\xi_n-\xi}\sqrt{y(\xi)-y_n(\xi)}\\
\label{eq:undif}
&\leq \sqrt{\xi_n-\xi}\norm{y-y_n}_{L^\infty}^{1/2}.
\end{align}
From $\vert y(\xi)-\xi\vert\leq\mu(\Real)$, we get 
\begin{equation*}
\abs{\xi_n-\xi}\leq
2\mu_n(\Real)+\abs{y_n(\xi_n)-y_n(\xi)}
=2\norm{h_n}_{L^1}+\abs{y(\xi)-y_n(\xi)}
\end{equation*}
and, therefore, since $h_n\to h$ in $L^1(\Real)$ (because $h=U_\xi^2+\bar r^2-h\zeta_\xi$) and $y_n\to y$ in
$\Linf$, $\abs{\xi_n-\xi}$ is bounded by a
constant $C$ independent of $n$. Then,
\eqref{eq:undif} implies
\begin{equation}
\label{eq:undif2}
\abs{u_n(x)-u_n(x_n)}\leq C
\norm{y-y_n}_{L^\infty}^{1/2}. 
\end{equation}
Since $y_n\to y$ and $U_n\to U$ in $\Linf$, it
follows from \eqref{eq:udec} and \eqref{eq:undif2}
that $u_n\to u$ in $\Linf$. By weak-star
convergence, we mean that
\begin{equation}
\label{eq:wsconv}
\lim_{n\to\infty}\int_\Real\bar \rho_n\phi\,dx=\int_\Real\bar\rho \phi\,dx
\end{equation}
for all continuous functions with compact
support. It follows from \eqref{eq:umudef2} that
\begin{equation}
\label{eq:compfmu}
\int_\Real\bar\rho_n\phi\,dx=\int_\Real\bar r_n\phi\circ y_n\,d\xi\ \text{ and }\
\int_\Real\bar \rho \phi\,dx=\int_\Real\bar r \phi\circ y
\,d\xi.
\end{equation}
Since
$y_n\to y$ in $\Linf$, the support of $\phi\circ
y_n$ is contained in some compact which can be
chosen independently of $n$ and, from Lebesgue's
dominated convergence theorem, we have that
$\phi\circ y_n\to\phi\circ y$ in $\Ltwo$. Hence,
since $\bar r_{n}\to \bar r$ in $\Ltwo$,
\begin{equation*}
\lim_{n\to\infty}\int_\Real \phi\circ y_n
\bar r_{n}\,d\xi=\int_\Real\phi\circ y \bar r\,d\xi,
\end{equation*}
and \eqref{eq:wsconv} follows from
\eqref{eq:compfmu}
Similarly one can show that $\mu_n$ converges weakly to $\mu$.
Finally, that $k_n$ converges to $k$ is obvious by the definition of convergence in Eulerian coordinates.
\end{proof}

\bibliographystyle{plain}

\end{document}